\def\disp{\displaystyle}
\def\R{\mathbb{R}}
\def\N{\mathbb{N}}
\def\C{\mathbb{C}}
\def\calF{\mathcal{F}}
\def\calH{\mathcal{H}}
\def\calL{\mathcal{L}}
\def\calZ{\mathcal{Z}}
\def\calU{\mathcal{U}}
\def\calV{\mathcal{V}}
\def\calX{\mathcal{X}}
\def\calR{\mathcal{R}}
\def\e{\varepsilon}
\def\pt{\partial_t}
\def\pR{\phi_{\rm{R}}}
\def\pI{\phi_{\rm{I}}}
\def\lambdaC{{\lambda_{\rm{R}}+i\lambda_{\rm{I}}}}
\def\lR{\lambda_{\rm R}}
\def\lI{\lambda_{\rm I}}
\def\lIn{\lambda_{{\rm I},n}}
\def\phiC{{\phi_{\rm{R}}+i\phi_{\rm{I}}}}
\def\PhiC{{\Phi_{\rm{R}}+i\Phi_{\rm{I}}}}
\def\XiC{{\Xi_{\rm{R}}+i\Xi_{\rm{I}}}}
\def\etaC{{\eta_{\rm{R}}}+i\eta_{\rm{I}}}
\def\Id{\rm{Id}}
\def\p{\partial}
\def\kn{k_n}
\def\k1{k_1}
\def\vphi{\varphi}
\def\bu{\bar{u}}
\def\bxi{\bar{\xi}}
\DeclareMathOperator{\sn}{sn}
\DeclareMathOperator{\sgn}{sgn}
\theoremstyle{plain}
	\newtheorem{theorem}{Theorem}[section]
	\newtheorem{lemma}[theorem]{Lemma}
	\newtheorem{proposition}[theorem]{Proposition}
	\newtheorem{remark}[theorem]{Remark}
\theoremstyle{plain}
\newcommand{\rf}[1]{(\ref{#1})}
\newcommand{\ddfrac}[2]{\displaystyle \frac{#1}{#2}}
\def\theequation{\arabic{section}.\arabic{equation}}
\begin{document}
\title[Very slow layer oscillations]{
Exact solutions describing
very slow layer oscillations in
a shadow reaction-diffusion system}

\author{Shin-Ichiro Ei}
\thanks{SE was supported by JSPS KAKENHI Grant Number 22H01129.}

\address{Department of Mathematics, Josai University,\\
1-1 Keyakidai, Sakado, Saitama 350-0295, Japan\\
{\rm{\texttt{sei@josai.ac.jp}}}
}

\author{Yasuhito Miyamoto*}
\thanks{*Corresponding author}
\thanks{ORCiD of YM is 0000-0002-7766-1849}
\thanks{YM was supported by JSPS KAKENHI Grant Number 24K00530.}
\address{Graduate School of Mathematical Sciences, The University of Tokyo,\\
3-8-1 Komaba, Meguro-ku, Tokyo 153-8914, Japan\\
{\rm{\texttt{miyamoto@ms.u-tokyo.ac.jp}}}
}

\author{Tatsuki Mori}
\thanks{TM was supported by JSPS KAKENHI Grant Number 22K13962.}
\address{
Faculty of Engineering, Musashino University,\\
3-3-3 Ariake, Koto-ku, Tokyo 135-8181, Japan\\
{\rm{\texttt{t-mori@musashino-u.ac.jp}}}
}

\begin{abstract}
We show in a rigorous way that a stable internal single-layer stationary solution is destabilized by the Hopf bifurcation as the time constant exceeds a certain critical value.
Moreover, the exact critical value and the exact period of oscillatory solutions can be obtained.
The exact period indicates that the oscillation is very slow, {\it i.e.,} the period is of order $O(e^{C/\e})$.
We also rigorously prove that Hopf bifurcations from multi-layer stationary solutions occur.
In this case anti-phase horizontal oscillations of layers are shown by formal calculations.
Numerical experiments show that the exact period agrees with the numerical period of a nearly periodic solution near the Hopf bifurcation point.
Anti-phase (out of phase) horizontal oscillations of layers are numerically observed.
\end{abstract}
\date{\today}
\subjclass[2010]{Primary: 35B32, 65P30; Secondary: 35B05, 35B36}
\keywords{Exact complex eigenvalues, Hopf bifurcation, Transversality condition, Complete elliptic integrals}
\maketitle

\section{Introduction and main results}

We are concerned with the shadow reaction-diffusion system
\begin{equation}\label{S1E1}
\begin{cases}
\pt u=\e^2u_{xx}+f(u)-\alpha\xi & \textrm{for}\ 0<x<1,\ 0<t<T,\\
\disp \tau\pt\xi=\int_0^1(\beta u-\gamma\xi)dx & \textrm{for}\ 0<t<T,\\
u_x(0,t)=u_x(1,t)=0 & \textrm{for}\ 0<t<T,
\end{cases}
\end{equation}
where $u=u(x,t)$ and $\xi=\xi(t)$ are unknown functions, $\e$, $\tau$, $\alpha$, $\beta$ and $\gamma$ are positive constants and $f$ is defined by
\begin{equation}\label{f}
f(u):=u-u^3.
\end{equation}
\begin{remark}
Coefficients of \rf{S1E1}  can be reduced to $\alpha = \beta = 1$ by the normalization
$\alpha\xi \to \xi$, $\ddfrac{\tau}{\alpha\beta} \to \tau$, 
$\ddfrac{\gamma}{\alpha\beta} \to \gamma$. However, we deal with the original equation \rf{S1E1}
as it is without normalization for the sake of the clarification of the relation between coefficients.
\end{remark}
This system is obtained by using the shadow limit $D\to\infty$ of the reaction-diffusion equation
\begin{equation}\label{S1E2}
\begin{cases}
\pt U=\e^2U_{xx}+f(U)-\alpha V & \textrm{for}\ 0<x<1,\ 0<t<T,\\
\tau\pt V=DV_{xx}+\beta U-\gamma V & \textrm{for}\ 0<x<1,\ 0<t<T,\\
U_x(0,t)=U_x(1,t)=V_x(0,t)=V_x(1,t)=0 & \textrm{for}\ 0<t<T,
\end{cases}
\end{equation}
where $U=U(x,t)$ and $V=V(x,t)$ are unknown functions.
Taking the limit $D\to\infty$, one can expect that $V$ spreads over the domain $0<x<1$ quickly, and hence $V$ becomes a spatially homogeneous function depending only on $t$ which is denoted by $\xi(t)$.
Replacing $\xi$ instead of $V$ and integrating the second equation of (\ref{S1E2}), we obtain (\ref{S1E1}).
The problem (\ref{S1E2}) is a mathematical model arising in various branches of mathematical sciences, {\it e.g.}, pattern formations in morphogenesis and chemical reaction processes.
In morphogenesis $U$ and $V$ stand for concentrations of biochemicals called {\it the activator} and {\it inhibitor}, respectively.
Therefore, (\ref{S1E2}) is often called the activator-inhibitor system.
It is known that (\ref{S1E2}) exhibits Turing patterns, {\it i.e.}, spontaneous spatial temporal patterns.

Theoretically for the system \rf{S1E2}, the front type solution with
internal layers have been constructed mainly by the singular perturbation technique
using the smallness of the parameter $\e$.
And also the stability of stationary solutions with internal layers 
was extensively investigated by the method of the singular limit eigenvalue problem (SLEP) 
(\cite{NF87}) and the geometrical singular perturbation methed (\cite{J95}).
 Through such researches, the complicated motion of multi-layer solutions
 have been known. 
 One of the typical example is an oscillating behavior of two layer solutions. 
 In practice, \cite{KK} showed the oscillating interaction of two layers
 and \cite{NM89} showed the occurrence of the Hopf bifurcation. 

Since the shadow system (\ref{S1E1}) is a simplified system of (\ref{S1E2}) 
as the limit $D \to \infty$,
there are several properties in \rf{S1E1} 
inherited from \rf{S1E2} while some properties are not inherited.
One of the typical examples which are not inherited is that
 \rf{S1E1} has only monotone stationary solutions as stable ones
while \rf{S1E2} has stable stationary solutions with multi-layers.
And also two layer solutions in \rf{S1E1} can move with exponentially slow
speed called very slow motion (\cite{KEM92}) while no such motion
does not appear in \rf{S1E2}.
Similarly, the side to side oscillating behavior of two layers has been 
believed as the non-inherited property of \rf{S1E1} from \rf{S1E2}
while the up and down movement  of a spike solution of the shadow
Gierer-Meinhardt model was reported in \cite{WW03a}. In practice,
up and down oscillations frequently appear in ordinaray differential systems
by changing the time coefficient like $\tau$ in \rf{S1E1} and \rf{S1E2}.
In that sense, up and down oscillations are regarded as the motion by ODE structures.
On the other hand, the occurrence of the side to side oscillating behavior
is essentially due to PDE structures such as boundary conditions and diffusions,
which is  shown for \rf{S1E1} in this paper by using exact solutions. 
Actually, we solve \rf{S1E1} exactly and   
give an exact single and multi-layer stationary solutions.
By analyzing the corresponding eigenvalue problem, we show 
that the stable single layer stationary solution can be
destabilized in the sense of the Hopf bifurcation
by making $\tau$ so larger as $O(e^{C/\e})$ while
the multi-layer stationary solution always has exponentially small positive eigenvalues,
which indicates the instability and the very slow dynamics corresponding to
the result of \cite{KEM92}.  
It is also shown that changing $\tau$ so larger as $O(e^{C/\e})$ 
in \rf{S1E1} causes
 the Hopf bifurcation of the multi-layer stationary solution.

Thus, it is understood that the origin of the oscillating behavior in multi-layer 
solutions can be traced back to the oscillation occurring within a single-layer 
solution while it has been believed that oscillations were caused 
by the interaction between two or more layers in general. 
Therefore, the results presented in this paper,
in particular, the Hopf bifurcation of the single layer stationary solution
 offer unintuitive and unpredictable 
insights into the dynamics of layers in shadow systems.
\bigskip

In the following, we give more precise explanations.
Let $n\in\N:=\{1,2,\ldots\}$.
The problem (\ref{S1E1}) has internal $n$-layer stationary solutions and they can be written as
\begin{equation}\label{un}
(\e,u^{\pm}_n(x),\xi)=\left(
\frac{1}{2n\sqrt{1+k^2}K(k)},
\pm\sqrt{\frac{2k^2}{1+k^2}}\sn \left((2nx+1)K(k),k\right),0\right)
\end{equation}
for $0<k<1$.
Here, $\sn(x,k)$ denotes Jacobi's elliptic function and $K(k)$ denotes the complete elliptic integral of the first kind.
Definitions and useful properties are recalled in Appendix of the present paper.
The parameter $k$ is uniquely determined by $\e$ through the relation
\begin{equation}\label{e}
\sqrt{1+k^2}K(k)=\frac{1}{2n\e}\ \ \textrm{for}\ \ 0<k<1
\end{equation}
provided that $\e\in (0,1/n\pi)$.
The solution $k$ of (\ref{e}) is denoted by $\kn$.
We see that $k_n\to 1$ as $\e\to 0$.
The layers of $u^{\pm}_n$ become sharper as $\e\to 0$.
The stationary solution (\ref{un}) is independent of $\alpha$, $\beta$ and $\gamma$.
We frequently use the relation:
\[
\rho:=u^{\pm}_n(0)=\pm\sqrt{\frac{2\kn^2}{1+\kn^2}}.
\]

Let
\[
X:=H^2_N(0,1)\times\R\ \ \textrm{and}\ \ Y:=L^2(0,1)\times\R,
\]
where $L^2(0,1)$ denotes the usual complex Lebesgue space on $(0,1)$ equipped with the inner product $\left<\,\cdot\,,\,\cdot\,\right>$,
\[
H^2_N(0,1):=\left\{u(x)\in H^2(0,1);\ u'(0)=u'(1)=0\right\},
\]
the prime stands for the derivative with respect to $x$ and $H^2(0,1)$ denotes the usual complex Sobolev space of order $2$.
Here, the $L^2$-inner product $\left<\,\cdot\,,\,\cdot\,\right>$ is defined by
\[
\left<u(x),v(x)\right>=\int_0^1u(x)\overline{v(x)}dx\ \ \textrm{for}\ \ u,v\in L^2(0,1)
\]
and the overline indicates complex conjugate.
Hence, it satisfies the linearity in the first argument and the conjugate symmetry, {\it i.e.},
\[
\left<au+bv,w\right>=a\left<u,w\right>+b\left<v,w\right>\ \textrm{for all}\ a,b\in \C\ \textrm{and all}\ u,v,w\in L^2(I),
\]
\[
\left<u,v\right>=\overline{\left<v,u\right>}\ \textrm{for all}\ u,v\in L^2(I).
\]

The linearized eigenvalue problem of (\ref{S1E1}) around (\ref{un}) becomes
\begin{equation}\label{EVP}
\begin{cases}
L[\phiC]-\alpha(\etaC)=(\lambdaC)(\phiC) & \textrm{for}\ 0<x<1,\\
\frac{\beta}{\tau}\left<\phiC,1\right>-\frac{\gamma}{\tau}(\etaC)=(\lambdaC)(\etaC),\\
\pR'(0)=\pI'(0)=\pR'(1)=\pI'(1)=0,
\end{cases}
\end{equation}
where
\begin{equation}\label{L}
L:=\e^2\partial_{xx}+f'(u^{\pm}_n(x))
\end{equation}
defined on $H^2_N(0,1)$.
The linear operator on $X$ defined by the left hand side
 of (\ref{EVP}) is denoted by
\[
\calL:=\left(
\begin{array}{cc}
L & -\alpha\\
\frac{\beta}{\tau}\left<\,\cdot\,,1\right> & -\frac{\gamma}{\tau}
\end{array}
\right).
\]
$\sigma_{\calL}$ and $\sigma_L$ denote the spectral sets of $\calL$ and $L$, respectively.
They consist only of eigenvalues.
In this paper a complex eigenvalue means a non-real eigenvalue.
Let
\[
\calH_+:=\{\lambda\in\C;\ {\rm Re}\lambda>0\},\ \ 
\calH_-:=\{\lambda\in\C;\ {\rm Re}\lambda<0\},
\]
and let $i\R$ denote the imaginary axis and the pure imaginary numbers.

The first main result is about single-layer solutions $u^{\pm}_1$.
\begin{theorem}\label{T1}
Suppose that $0<\e<1/\pi$.
Let us consider single-layer stationary solutions $(u^{\pm}_1,0)$ and let $\k1$ be defined by (\ref{e}) with $n=1$.
Let $\delta:=\alpha\beta/\gamma$ and
\begin{multline}\label{T1E-1}
\tau_n={2(\delta+2)\gamma}\Bigg/\left[\left\{
\left(2(\delta+2)-\frac{3\delta}{1+\kn^2}\left(2\frac{E(\kn)}{K(\kn)}-1+\kn^2\right)\right)^2
\right.\right.\\
\left.\left.+24(\delta+2)\left(\frac{1-\kn^2}{1+\kn^2}\right)^2
\right\}^{1/2}
-2(\delta+2)+\frac{3\delta}{1+\kn^2}\left(2\frac{E(\kn)}{K(\kn)}-1+\kn^2\right)\right],
\end{multline}
where $K(k)$ and $E(k)$ are Jacobi's complete elliptic integrals of the first and second kinds, respectively.
They are defined by (\ref{K}) and (\ref{E}) in Appendix.
Let $\chi_n$ be defined by
\begin{equation}\label{delta}
\chi_n:=\frac{(1-\kn^2)^2K(\kn)}{(1+\kn^2)\left\{2E(\kn)-(1-\kn^2)K(\kn)\right\}}.
\end{equation}
Here, $\chi_n>0$ because of Lemma~\ref{AL1}.
Suppose that
\begin{equation}\label{T1E0}
\chi_1<\delta.
\end{equation}
Note that (\ref{T1E0}) holds if $\e>0$ is small ($k_1$ is close to $1$ and the layer becomes sharp).
Then there exists a unique $\tau_1>0$, which is given by (\ref{T1E-1}) with $n=1$, such that the following (i)--(iii) hold:\\
(i) If $0<\tau<\tau_1$, then $\sigma_{\calL}\subset \calH_-$. Hence, $(u^{\pm}_1,0)$ is stable.\\
(ii) If $\tau>\tau_1$, then $\sigma_{\calL}\cap \calH_+\neq\emptyset$. Hence, $(u^{\pm}_1,0)$ is unstable.\\
(iii) If $|\tau-\tau_1|$ is small, then $\sigma_{\calL}$ has exactly one pair of complex conjugate eigenvalues $\lR(\tau)\pm i\lI(\tau)$ such that $\lR(\tau)\pm i\lI(\tau)$ are simple, other eigenvalues are real and negative, both $\lR(\tau)$ and $\lI(\tau)$ are $C^1$-functions of $\tau$,
\[
\lR(\tau_1)=0,\ \ \lI(\tau_1)=\lambda_{{\rm I},1}>0\ \ \textrm{and}\ \ \frac{d\lR}{d\tau}(\tau_1)>0\ \textrm{(transversality condition)},
\]
where $\lambda_{\rm{I},1}$ is defined by the following (\ref{T1E1}) with $n=1$:
\begin{multline}\label{T1E1}
\lIn=\frac{1}{\sqrt{2}}\left[\left\{\left(2(\delta+2)-\frac{3\delta}{1+\kn^2}\left(2\frac{E(\kn)}{K(\kn)}-1+\kn^2\right)\right)^2
+24(\delta+2)\left(\frac{1-\kn^2}{1+\kn^2}\right)^2\right\}^{1/2}\right.\\
\left.-\left\{
2(\delta+2)-\frac{3\delta}{1+\kn^2}\left(2\frac{E(\kn)}{K(\kn)}-1+\kn^2\right)+6\left(\frac{1-\kn^2}{1+\kn^2}\right)^2\right\}\right]^{1/2}.
\end{multline}
Therefore, a Hopf bifurcation occurs at $\tau=\tau_1$.
\begin{itemize}
\item[(HB)]
Specifically, there exist $r_0>0$ and a continuously differential curve $\{(u(r),\xi(r),\tau(r))\}$, $|r|<r_0$, of $2\pi/\kappa(r)$-periodic solutions of (\ref{S1E1}) through $(u(0),\xi(0),\tau(0))=(u^{\pm}_1,0,\tau_1)$ with $2\pi/\kappa(0)=2\pi/\lambda_{\rm{I},1}$.
Every other periodic solution of (\ref{S1E1}) in a neighborhood of $(u^{\pm}_1,0,\tau_1)$ is obtained by a phase shift $S_{\theta}(u(r),\xi(r))$, $0\le\theta<2\pi/\kappa(\tau)$, of $(u(r),\xi(r),\tau(r))$.
In particular, $(u(-r),\xi(-r))=S_{\pi/\kappa(r)}(u(r),\xi(r))$, $\kappa(-r)=\kappa(r)$, $\tau(-r)=\tau(r)$ for all $|r|<r_0$ and
\[
\frac{2\pi}{\kappa(r)}=\frac{2\pi}{\lambda_{{\rm I},1}}+o(1)\ \ \textrm{as}\ \ r\to 0.
\]
\end{itemize}
\noindent
(iv) The following asymptotic formulas hold:
As $\e\to 0$,
\begin{align*}
\frac{2\pi}{\lambda_{{\rm I},1}}&=\frac{\pi}{12}\sqrt{\frac{\delta+2}{\delta}}
\frac{1}{\left({\sqrt{2}\e}\right)^{1/2}}\exp\left(\frac{1}{\sqrt{2}\e}\right)(1+o(1)),\\
\tau_1&=\frac{\gamma(\delta+2)}{192}\exp\left(\frac{\sqrt{2}}{\e}\right)(1+o(1)),\\
\chi_1&=\frac{16\sqrt{2}}{\e}\exp\left(-\frac{\sqrt{2}}{\e}\right)(1+o(1)).
\end{align*}
\end{theorem}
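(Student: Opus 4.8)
The plan is to eliminate the scalar unknown $\etaC$ from the eigenvalue problem \rf{EVP} and reduce everything to a single characteristic equation for $\lambda$, then to exploit the fact that the reduced problem is genuinely finite dimensional because the constant function $1$ lies in a two-dimensional eigenspace of $L$. Concretely, for $\lambda\neq-\gamma/\tau$ the second line of \rf{EVP} gives $\etaC=\frac{\beta}{\gamma+\tau\lambda}\langle\phiC,1\rangle$, and substituting into the first line yields $(L-\lambda)(\phiC)=\frac{\alpha\beta}{\gamma+\tau\lambda}\langle\phiC,1\rangle\cdot 1$. Provided $\lambda\notin\sigma_L$ this forces $\phiC$ to be a multiple of $(L-\lambda)^{-1}1$, and pairing with $1$ gives the scalar characteristic equation
\[
\gamma+\tau\lambda=\alpha\beta\,\langle(L-\lambda)^{-1}1,1\rangle .
\]
Thus the nontrivial part of $\sigma_{\calL}$ is governed entirely by the meromorphic function $\lambda\mapsto\langle(L-\lambda)^{-1}1,1\rangle$, which I would compute in closed form.

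First I would determine $\sigma_L$ explicitly. Writing $u^{\pm}_1=\rho\,\sn((2x+1)K(\k1),\k1)$ and using \rf{e}, the operator $L=\e^2\partial_{xx}+1-3(u^{\pm}_1)^2$ from \rf{L} becomes, after the change of variable $z=(2x+1)K$, the factor times the $n=2$ Lam\'e operator $\frac{1}{1+\k1^2}\big(\partial_{zz}+(1+\k1^2)-6\k1^2\sn^2 z\big)$. Its band-edge eigenfunctions are the classical Lam\'e polynomials; among them exactly two that are even about $x=\tfrac12$, namely $\psi_\pm=\sn^2((2x+1)K)-c_\pm$ with $c_\pm=2/h_\pm$ and $h_\pm$ the roots of $h^2-4(1+\k1^2)h+12\k1^2=0$, satisfy the Neumann conditions, with eigenvalues $\mu_\pm=1-h_\pm/(1+\k1^2)$. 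One checks that $\mu_+=3(1-\k1^2)^2/[(1+\k1^2)(\cdots)]>0$ is the \emph{unique} positive eigenvalue of $L$, all others being negative. The decisive observation is the exact identity $\psi_+-\psi_-=c_--c_+$, i.e. $1=(\psi_+-\psi_-)/(c_--c_+)$: the constant function lies in $\mathrm{span}\{\psi_+,\psi_-\}$. Since $L$ is self-adjoint, every other eigenfunction is orthogonal to $1$ and hence contributes an eigenvalue of $\calL$ (with $\etaC=0$) equal to its negative $L$-eigenvalue, while $\langle(L-\lambda)^{-1}1,1\rangle=\frac{1}{c_--c_+}\big[\frac{s-c_+}{\mu_+-\lambda}-\frac{s-c_-}{\mu_--\lambda}\big]$ with $s=\langle\sn^2((2x+1)K),1\rangle=(1-E(\k1)/K(\k1))/\k1^2$. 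The characteristic equation therefore clears to an explicit cubic $P(\lambda,\tau)=0$ whose coefficients are rational in $\mu_\pm,c_\pm,s,\gamma,\tau$ and $\delta=\alpha\beta/\gamma$.

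Having reduced to a cubic, I would locate the Hopf point by setting $\lambda=i\omega$ and separating real and imaginary parts; for a cubic $a_3\lambda^3+a_2\lambda^2+a_1\lambda+a_0$ this yields $\omega^2=a_0/a_2=a_1/a_3$, so the compatibility $a_0a_3=a_1a_2$ is one equation determining $\tau=\tau_1$, after which $\lambda_{{\rm I},1}^2=a_1/a_3$. Substituting the explicit $\mu_\pm,c_\pm,s$ (using $c_+-c_-=2R/(3\k1^2)$ with $R=\sqrt{1-\k1^2+\k1^4}$, and $s$ in terms of $E/K$, which produce the combinations $2E(\k1)/K(\k1)-1+\k1^2$ and $(1-\k1^2)/(1+\k1^2)$) is expected to give exactly \rf{T1E-1} and \rf{T1E1}; the sum-of-squares under the square root is precisely the discriminant-type term from this quadratic-in-$\omega^2$ computation. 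Positivity of the radicand, hence the existence of a genuine imaginary crossing rather than a real one, is where the hypothesis \rf{T1E0}, $\chi_1<\delta$, enters: I would show that $\chi_1$ from \rf{delta} is exactly the threshold value of $\delta$ at which $\lambda_{{\rm I},1}^2$ changes sign. Transversality $\frac{d\lR}{d\tau}(\tau_1)>0$ then follows by implicit differentiation of $P(\lambda,\tau)=0$ at $\lambda=i\lambda_{{\rm I},1}$.

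The statements (i)--(ii) require a \emph{global} root count: I would show via Routh--Hurwitz on $P$ that for small $\tau>0$ all three cubic roots lie in $\calH_-$ (here $\mu_+>0$ but exponentially small, so the inhibitor coupling dominates and stabilizes), that as $\tau$ increases exactly one conjugate pair reaches $i\R$ at $\tau_1$ and crosses transversally into $\calH_+$, and that the remaining real root together with all the trivial negative eigenvalues stay in $\calH_-$; monotonicity of the relevant coefficient ratios in $\tau$ with the transversality already proved yields uniqueness of $\tau_1$. This global bookkeeping, ensuring that no other eigenvalue sneaks across $i\R$ and that the first crossing is the complex pair (which is exactly what $\chi_1<\delta$ guarantees), is the main obstacle; the algebra producing \rf{T1E-1}--\rf{T1E1} is lengthy but routine once the cubic is in hand. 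Finally (HB) follows from the standard Hopf bifurcation theorem applied to the semiflow of \rf{S1E1}, since we have a simple conjugate pair crossing $i\R$ transversally with all other spectrum in $\calH_-$, and the asymptotics in (iv) follow by inserting the $k\to1$ expansions $K(k)\sim\log(4/\sqrt{1-k^2})$ and $E(k)\to1$, together with \rf{e} (which gives $1-\k1^2\sim16\,e^{-1/(\sqrt{2}\e)}$), into \rf{T1E-1}, \rf{T1E1} and \rf{delta}.
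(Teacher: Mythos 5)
Your overall architecture coincides with the paper's: eliminate $\etaC$, reduce to the scalar characteristic equation $\gamma+\tau\lambda=\alpha\beta\left<(L-\lambda)^{-1}[1],1\right>$ (the paper's $h(\lambda)=0$ in Lemma~\ref{L1}, via Proposition~\ref{S2P5}), clear it to a cubic (the paper's $g$ in \eqref{g}), locate the imaginary crossing by separating real and imaginary parts, and feed the result into Kielh\"ofer's Hopf theorem. Two of your ingredients differ in a genuinely useful way. First, you compute $\left<(L-\lambda)^{-1}[1],1\right>$ by Lam\'e band-edge theory: the even Lam\'e polynomials $\sn^2-c_\pm$ with $h^2-4(1+\k1^2)h+12\k1^2=0$, $c_\pm=2/h_\pm$, $\mu_\pm=1-h_\pm/(1+\k1^2)$, and the identity $\psi_+-\psi_-=c_--c_+$ putting the constant in a two-dimensional eigenspace. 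This is exactly equivalent to the paper's route (Proposition~\ref{S2P2}), which instead uses the first integral to get $L[(u^{\pm}_1)^2]=2\rho^2-\rho^4-3(u^{\pm}_1)^2$ and the closed form \eqref{S2P2E0}; your $\mu_\pm$ are the roots of the paper's denominator $\lambda^2+2\lambda-3(1-\rho^2)^2$ (one can check $h=(1-\mu)(1+\k1^2)$ transforms one quadratic into the other). Your spectral-decomposition argument even yields a strengthening of Lemma~\ref{L1+}~(iv): \emph{every} eigenfunction of $L$ other than $\psi_\pm$ is orthogonal to $1$, not merely the odd ones. Second, your global root count via Routh--Hurwitz ($a_2>0$ always, $a_0>0$ exactly when $\chi_1<\delta$, and $a_1a_2-a_0a_3$ a concave quadratic in $\tau$ that is positive at $\tau=0$, hence with a unique positive zero $\tau_1$) is arguably cleaner than the paper's continuity-plus-uniqueness argument in Lemma~\ref{L3}, and it delivers uniqueness of $\tau_1$ and statements (i)--(ii) for the cubic roots in one stroke. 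You also correctly identify where \eqref{T1E0} enters: it is precisely the positivity of $a_0=g(0,\tau)$ and of the radicand for $\lambda_{{\rm I},1}^2$, matching the paper's computation $36(1-\rho^2)^2\{\delta(1-\left<(u_1^{\pm})^2,1\right>)-(1-\rho^2)^2\}>0$.

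There are, however, two genuine omissions. (1) \emph{Algebraic simplicity of the critical pair.} You assert ``simple conjugate pair'' and invoke the Hopf theorem, but your reduction only gives \emph{geometric} simplicity ($\phiC$ forced to be a multiple of $(L-\lambda)^{-1}[1]$). The hypothesis of the Hopf theorem, and claim (iii) itself, require algebraic simplicity of $\pm i\lambda_{{\rm I},1}$ as eigenvalues of the nonlocal operator $\calL$; a simple root of the reduced cubic does not by itself exclude a Jordan block without a further argument. The paper devotes Lemma~\ref{L2+} to this: it computes the adjoint eigenvector of $\calL^*$ explicitly and shows the non-degeneracy $J(\tau_1)\neq 0$ by verifying that the imaginary part of \eqref{L2+E8} equals $6\lambda_{{\rm I},1}(1-\left<(u^{\pm}_1)^2,1\right>)+4\lambda_{{\rm I},1}(\delta+2)\gamma/(\alpha\beta)>0$. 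Some such computation (or an Evans-function argument identifying algebraic multiplicity with the order of the zero of $(\tau\lambda+\gamma)h(\lambda)$) must be supplied. (2) \emph{The pole eigenvalues.} Your spectral inclusion covers $\lambda\notin\sigma_L$, and your orthogonality observation handles all $\mu_j$ with $\left<\psi_j,1\right>=0$, but at $\lambda=\mu_\pm$ the resolvent reduction breaks down and nothing in your sketch decides whether $\mu_+\in\sigma_{\calL}$. Since $\mu_+=\mu^{(1)}_0>0$, this is not cosmetic: if $\mu_+$ belonged to $\sigma_{\calL}$, statement (i) would fail for every $\tau$. The paper excludes it in Lemma~\ref{L1+}~(v) by solving \eqref{L1E1} at $\lambda=\mu^{(1)}_0$ using $\left<\psi^{(1)}_0,1\right>\neq 0$ (the ground state has one sign). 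Both gaps are fillable within your framework, but as written the proof of (i) and of the simplicity claim in (iii) is incomplete.
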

Note that the statement (HB) is taken from \cite[Theorem I.8.2]{K04}.
See Theorem~\ref{S5T1} of the present paper for the abstract Hopf bifurcation theorem.
Theorem~\ref{T1} says that a single-layer stationary solution is destabilized by the Hopf bifurcation as $\tau$ exceeds $\tau_1$ and that the oscillatory solution is very slow, {\it i.e.,} the period is $2\pi/\lambda_{{\rm I},1}=O(e^{C/\e})$ as $\e\to 0$.

We need some notation to state the next theorem.
We consider the scalar problem
\begin{equation}\label{SFE}
\begin{cases}
\e^2u''+f(u)=0 & \textrm{for}\ 0<x<1,\\
u'(0)=u'(1)=0,
\end{cases}
\end{equation}
where $f$ is defined by (\ref{f}).
Then, $u^{\pm}_n$ defined by (\ref{un}) are also solutions of (\ref{SFE}).
The associated eigenvalue problem of (\ref{SFE}) is
\begin{equation}\label{EVPSFE}
\begin{cases}
L\psi=\mu\psi & \textrm{for}\ 0<x<1,\\
\psi'(0)=\psi'(1)=0,
\end{cases}
\end{equation}
where $L$ ie defined by (\ref{L}).
Since $f'(u^+_n)\equiv f'(u^-_n)$, the eigenvalues of (\ref{EVPSFE}) for both $u^+_n$ and $u^-_n$ are the same.
Let $\sigma_L:=\{ \mu_j^{(n)}\}_{j=0}^{\infty}$,
\[
\mu^{(n)}_0>\mu^{(n)}_1\ge\mu^{(n)}_2\ge\cdots,
\]
denote the eigenvalues of (\ref{EVPSFE}).
Then it is known that the first eigenvalue satisfies
\begin{equation}\label{SE}
\mu^{(n)}_0=96\exp\left(-\frac{\sqrt{2}}{n\e}\right)(1+o(1))\ \ \textrm{as}\ \ \e\to 0.
\end{equation}
Detailed information on $\mu^{(n)}_j$, $j\ge 0$, can be found in Proposition~\ref{S2P1} of the present paper.

When the linearized operator has an eigenvalue with positive real part, the corresponding solution is unstable.
However, if the real part is small, {\it e.g.,} it is of order $O(\e^{-C/\e})$ as $\e\to 0$, then the solution is often called {\it metastable}.
The following theorem asserts that interior multi-layer stationary solutions are metastable for $0<\tau<\tau_n$ and the Hopf bifurcation occurs at $\tau=\tau_n$.
\begin{theorem}\label{T2}
Let us consider internal $n$-layer stationary solutions $(u^{\pm}_n,0)$, $n\ge 2$, and let $\kn$ be defined by (\ref{e}).
Let $\delta:=\alpha\beta/\gamma$ and $\chi_n$ be defined by (\ref{delta}).
Assume that
\begin{equation}\label{T2E0}
\chi_n<\delta.
\end{equation}
Note that (\ref{T2E0}) holds if $\e>0$ is small ($k_n$ is close to $1$ and the layers become sharp.).
Then there exists a unique $\tau_n>0$, which is defined by (\ref{T1E-1}), such that the following (i)--(iii) hold:\\
(i) For $\tau>0$, $\mu_1^{(n)}\in\sigma_{\calL}$, and hence $(u^{\pm}_n,0)$ are unstable.
Furthermore, if $0<\tau<\tau_n$, then $\sigma_{\calL}\cap\left(i\R\cup \calH_+\right)\subset (0,\mu^{(n)}_1]$.
Since $0<\mu^{(n)}_1<\mu^{(n)}_0$, it follows from (\ref{SE}) that $(u^{\pm}_n,0)$ are metastable for $0<\tau<\tau_n$.\\
(ii) If $\tau>0$ is large, then $\sigma_{\calL}\subset (-\infty,\mu^{(n)}_0]$.
Hence, it follows from (\ref{SE}) that $(u^{\pm}_n,0)$ are metastable for large $\tau>0$.\\
(iii) If $|\tau-\tau_n|$ is small, then $\sigma_{\calL}$ has exactly one pair of complex conjugate eigenvalues $\lR(\tau)\pm i\lI(\tau)$ such that $\lR(\tau)\pm i\lI(\tau)$ are simple, other eigenvalues are real and on $(-\infty,\mu^{(n)}_1]$,
$\lR(\tau)$ and $\lI(\tau)$ are $C^1$-functions of $\tau$,
\[
\lR(\tau_n)=0,\ \ \lI(\tau_n)=\lambda_{{\rm I},n}>0\ \ \textrm{and}\ \ \frac{d\lR}{d\tau}(\tau_n)>0\ \textrm{(transversality condition)},
\]
where $\lIn$ is given by (\ref{T1E1}).
Therefore, a Hopf bifurcation (HB) with $u^{\pm}_1$, $\tau_1$ and $\lambda_{I,1}$, which is stated in Theorem~\ref{T1}~(iii), replaced by $u^{\pm}_n$, $\tau_n$ and $\lambda_{I,n}$ occurs at $\tau=\tau_n$.
\\
(iv) The following asymptotic formulas hold:
As $\e\to 0$,
\begin{align}
\frac{2\pi}{\lIn}&=\frac{\pi}{12}\sqrt{\frac{\delta+2}{\delta}}
\frac{1}{\left({\sqrt{2}n\e}\right)^{1/2}}\exp\left(\frac{1}{\sqrt{2}n\e}\right)(1+o(1)),\label{T2F1}\\
\tau_n&=\frac{\gamma(\delta+2)}{192}\exp\left(\frac{\sqrt{2}}{n\e}\right)(1+o(1)),\label{T2F2}\\
\chi_n&=\frac{16\sqrt{2}}{n\e}\exp\left(-\frac{\sqrt{2}}{n\e}\right)(1+o(1)).\label{T2F3}
\end{align}
\end{theorem}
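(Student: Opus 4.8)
The plan is to reduce the eigenvalue problem (\ref{EVP}) to a scalar characteristic equation by eliminating $\eta$, to isolate a single-cell quantity that is \emph{independent of $n$}, and then to run the analysis of Theorem~\ref{T1} essentially verbatim, the only new feature being a family of exponentially small positive eigenvalues. For $\lambda\neq-\gamma/\tau$ the second line of (\ref{EVP}) gives $\eta=\beta\left<\phi,1\right>/(\tau\lambda+\gamma)$, and inserting this into the first line yields the nonlocal problem $(L-\lambda)\phi=\frac{\alpha\beta}{\tau\lambda+\gamma}\left<\phi,1\right>$ on $H^2_N(0,1)$. Writing $\{\psi_j\}$ for the real orthonormal eigenbasis of $L$ from Proposition~\ref{S2P1} and $a_j:=\left<1,\psi_j\right>$, either (a) $\left<\phi,1\right>=0$, so $\lambda=\mu^{(n)}_j\in\sigma_L$ for some $j$ with $a_j=0$ and $(\phi,\eta)=(\psi_j,0)$ is an eigenpair of $\calL$, or (b) $\left<\phi,1\right>\neq0$, so $\lambda$ solves
\[
\tau\lambda+\gamma=\alpha\beta\,G(\lambda),\qquad G(\lambda):=\sum_{a_j\neq0}\frac{a_j^2}{\mu^{(n)}_j-\lambda}=\left<(L-\lambda)^{-1}1,1\right>.
\]
Here $G$ is $n$-independent: since the potential $f'(u^{\pm}_n)$ has period $1/n$ and is reflection-symmetric about each cell midpoint, the single-cell Neumann solution of $(L-\lambda)w=1$ extends $C^1$-ly and periodically across all $n$ cells, whence $G(\lambda)$ equals the single-cell expression for every $n$ and depends on $n$ only through $k=\kn$. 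Consequently the case-(b) equation is exactly the $n=1$ characteristic equation of Theorem~\ref{T1} with $\k1$ replaced by $\kn$.

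First I would prove (i). Proposition~\ref{S2P1} shows that the positive eigenvalues of $L$ are the exponentially small top-band eigenvalues $\mu^{(n)}_0>\mu^{(n)}_1\ge\cdots>0$, that exactly one of them, $\mu^{(n)}_0$, has $a_0\neq0$ while all others have zero mean, and that $0<\mu^{(n)}_1<\mu^{(n)}_0$. Case~(a) then places $\mu^{(n)}_1$ (and the remaining zero-mean positive eigenvalues) in $\sigma_{\calL}$ for \emph{every} $\tau>0$, giving instability; because these lie in $(0,\mu^{(n)}_0]$ and $\mu^{(n)}_0$ obeys (\ref{SE}), they are exponentially small and the solution is metastable. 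All positive zero-mean eigenvalues are $\le\mu^{(n)}_1$, so it remains to show that for $0<\tau<\tau_n$ every case-(b) root lies in $\calH_-$; by the $n$-independence of $G$ this is precisely the stability half of the proof of Theorem~\ref{T1} applied with $k=\kn$, and it yields $\sigma_{\calL}\cap(i\R\cup\calH_+)\subset(0,\mu^{(n)}_1]$.

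Next I would dispose of (ii)--(iv). For large $\tau$ a continuity/perturbation argument shows that one case-(b) root tends to $-\gamma/\tau\to0^-$ and the others approach the (nonzero-mean) poles $\mu^{(n)}_j\le\mu^{(n)}_0$, so together with the case-(a) eigenvalues $\le\mu^{(n)}_1<\mu^{(n)}_0$ one obtains $\sigma_{\calL}\subset(-\infty,\mu^{(n)}_0]$, which with (\ref{SE}) gives (ii). For (iii), since the case-(b) equation coincides with the single-layer one, setting $\lambda=i\omega$ and separating real and imaginary parts of $\tau\lambda+\gamma=\alpha\beta G(\lambda)$ reproduces the same pair of real equations solved in Theorem~\ref{T1}; this delivers the unique $\tau_n>0$ of (\ref{T1E-1}) and crossing frequency $\lIn>0$ of (\ref{T1E1}), a single simple conjugate pair $\lR(\tau)\pm i\lI(\tau)$ crossing transversally with $\frac{d\lR}{d\tau}(\tau_n)>0$, and all other roots real and $\le\mu^{(n)}_1$; the abstract Hopf theorem (Theorem~\ref{S5T1}, \cite[Theorem~I.8.2]{K04}) then yields (HB). Finally (iv) follows by feeding the $\kn\to1$ asymptotics fixed by (\ref{e}) into (\ref{T1E-1}), (\ref{T1E1}) and (\ref{delta}); since (\ref{e}) reads $\sqrt{1+k^2}K(k)=1/(2n\e)$, the single-layer expansions of Theorem~\ref{T1}(iv) carry over with $\e$ replaced by $n\e$, producing (\ref{T2F1})--(\ref{T2F3}).

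The main obstacle I anticipate is the spectral bookkeeping underlying (i): proving that among the clustered exponentially small positive eigenvalues exactly one, $\mu^{(n)}_0$, has nonzero mean (so that it alone feeds the Hopf mechanism through $G$) while $\mu^{(n)}_1,\mu^{(n)}_2,\dots$ have zero mean, and then showing, uniformly as $\e\to0$, that the single exponentially small pole of $G$ interacts with the $O(1)$ background so as to keep every case-(b) root in $\calH_-$ for $\tau<\tau_n$. Once this classification and the $n$-independence of $G$ are secured, parts (ii)--(iv) are a direct transcription of the single-layer argument.
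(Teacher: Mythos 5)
Your overall architecture coincides in substance with the paper's: eliminate $\eta$, reduce to the characteristic equation $\tau\lambda+\gamma=\alpha\beta\left<(L-\lambda)^{-1}[1],1\right>$ (the paper's $h(\lambda)=0$, Proposition~\ref{S2P5} and Lemma~\ref{L1}), split $\sigma_{\calL}$ into zero-mean eigenvalues of $L$ plus roots of that equation (Lemma~\ref{L1+}), and feed the crossing data into Kielh\"ofer's theorem (Theorem~\ref{S5T1}). Where you genuinely differ is the organizing device: the paper never reduces $n\ge2$ to $n=1$; instead it computes $(L-\lambda)^{-1}[1]$ in closed form (Proposition~\ref{S2P2}), exploiting that ${\rm span}\{1,(u^{\pm}_n)^2\}$ is $L$-invariant, so $h(\lambda)=0$ becomes the explicit cubic (\ref{g}) uniformly in $n$. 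Your periodic-extension argument for the $n$-independence of $G$ is a correct alternative (the potential is $1/n$-periodic and even about cell midpoints, hence also about cell boundaries, so single-cell Neumann solutions extend), and it explains conceptually why $\tau_n$ and $\lIn$ depend on $n$ only through $\kn$. The closed form buys more, though: it shows the only poles of $G$ are $\mu^{(n)}_0,\mu^{(n)}_n$, it makes the transversality and large-$\tau$ computations explicit, and it powers the simplicity proof (Lemma~\ref{L2+}), a nontrivial adjoint computation showing $J(\tau_n)\neq 0$. Your proposal asserts simplicity but does not supply this step; simplicity of the cubic root does not by itself give algebraic simplicity of the eigenvalue of $\calL$.

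Two concrete errors. First, in (ii) you claim the near-zero root for large $\tau$ tends to $-\gamma/\tau\to 0^-$. The sign is wrong: the standing hypothesis $\chi_n<\delta$ is exactly $\alpha\beta G(0)>\gamma$ (equivalently $g(0,\tau)>0$, see (\ref{L1+E0})), so that root is $\approx(\alpha\beta G(0)-\gamma)/\tau>0$; equivalently, the cubic's constant term is positive, and with roots near $\mu^{(n)}_n<0$ and $\mu^{(n)}_0>0$ the third root must be positive. The containment $\sigma_{\calL}\subset(-\infty,\mu^{(n)}_0]$ survives, but your mechanism would additionally need the root near the pole $\mu^{(n)}_0$ to approach it from below; the paper avoids both delicacies with a direct sign argument valid for all $\tau$: for real $\lambda>\mu^{(n)}_0$ one has $\alpha\beta G(\lambda)<0<\tau\lambda+\gamma$, so no real root exceeds $\mu^{(n)}_0$. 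Second, you attribute to Proposition~\ref{S2P1} that $a_j=0$ for every positive eigenvalue except $\mu^{(n)}_0$. Proposition~\ref{S2P1} yields zero mean only for odd $j$ (anti-symmetry); for even $j$ with $0<j<n$ the zero-mean property is true but follows from $1\in{\rm span}\{\psi^{(n)}_0,\psi^{(n)}_n\}$ — a consequence of the invariant subspace behind Proposition~\ref{S2P2}, or of your own $n$-independence of $G$, whose only poles are the single-cell ones — not from S2P1. For the theorem as stated only $\left<\psi^{(n)}_1,1\right>=0$ and $\left<\psi^{(n)}_0,1\right>\neq 0$ are needed, so this is repairable, but the "spectral bookkeeping" you flag as the main obstacle must be grounded this way. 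Minor omissions: your dichotomy skips $\lambda=-\gamma/\tau$ (it can be an eigenvalue when $\gamma/\tau=3\bigl(1-\left<(u^{\pm}_n)^2,1\right>\bigr)$, but is negative, hence harmless; the paper covers it via $\sigma_0$ in Lemma~\ref{L1}), and you should record $0\notin\sigma_{\calL}$ (Lemma~\ref{L1+}~(iii), needed for condition (\ref{S5T1E7})), which again uses $\chi_n<\delta$.
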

\bigskip

Jacobi's elliptic functions cannot be used for a wide class of nonlinear ODEs.
However, if they can be used, then those enable us to make detailed analysis.
In the study of (\ref{S1E1}) internal $n$-layer stationary solutions can be written in terms of elliptic functions, since the nonlinear term consists only of a cubic term.
Moreover, detailed informations on $L$ which are shown in Section~2 are already known.
Using these information, we show in Lemmas~\ref{L1} and \ref{L1+} that $\calL$ has at most two complex eigenvalues, which are conjugate, and that they can be obtained as roots of a cubic polynomial (\ref{g}).
In Lemma~\ref{L2} we determine the exact critical value of $\tau$ such that two complex eigenvalues are on $i\R\setminus\{0\}$ and obtain exact expressions of those eigenvalues.
The simplicity of pure imaginary eigenvalues (Lemma~\ref{L2+}) and the transversality condition (Lemma~\ref{L2}~(iii)) can be verified by explicit calculations of elliptic functions with an exact expression of a resolvent term (\ref{S2P2E0}).
In summary all important calculations can be carried out.
It is in general difficult to obtain the exact period and exact critical value in Hopf bifurcations.
Our case seems rare.
However, a similar analysis is applicable to the shadow Gierer-Meinhardt system
\[
\begin{cases}
\partial_tu=\e^2u_{xx}-u+\frac{u^p}{\xi^q} & \textrm{for}\ 0<x<1,\ 0<t<T,\\
\tau\partial_t\xi=\int_0^1\left(-\xi+\frac{u^r}{\xi^s}\right)dx & \textrm{for}\ 0<x<1,\\
u_x(0,t)=u_x(1,t)=0 & \textrm{for}\ 0<t<T
\end{cases}
\]
only when $(p,r)=(3,1)$ or $(3,3)$.
See \cite{MNN24} for details.
When $(p,r)=(3,2)$ or $(2,2)$, in \cite{WW03} a different method using Jacobi's elliptic functions was used to prove Hopf bifurcations.
However, exact periods and critical values were not obtained.
See also \cite{D01,GMW21,WW03a} for Hopf bifurcations for small $\e>0$.\\

This paper consists of eight sections.
In Section~2 we recall known results about eigenvalues of (\ref{EVPSFE}).
We also recall useful lemmas in our study of eigenvalues of a linear operator with nonlocal term.
In Section~3 we study eigenvalues of $\sigma_{\calL}$ in details.
In Section~4 we obtain asymptotic formulas (\ref{T2F1}), (\ref{T2F2}) and (\ref{T2F3}).
In Section~5 we prove Theorems~\ref{T1} and \ref{T2}, using lemmas proved in Sections~3 and 4.
In Section~6 we study the oscillatory behavior of multi-layer solutions, using formal calculations.
Specifically, anti-phase horizontal oscillations occur near the Hopf bifurcation point.
In Section~7 we show numerical simulations for (\ref{S1E1}).
In particular, we will see that the exact period $2\pi/\lambda_{{\rm I},1}$ agrees with a numerical period of a nearly periodic solution of \eqref{S1E1}.
In our numerical experiment the relative error percentage is
\[
1-\frac{\textrm{Numerical period}}{\textrm{Exact period}}\simeq 2.1\%.
\]
Moreover, anti-phase horizontal oscillations of layers are numerically observed.
The last section is the Appendix.
We recall the definition and properties of Jacobi's elliptic function $\sn(x,k)$ and the complete elliptic integral of the first kind $K(k)$ and of the second kind $E(k)$.
We also recall useful lemmas about $\sn(x,k)$ and $K(k)$.

\section{Preliminaries}
Let $L$ be the operator defined by (\ref{L}) in $L^2(0,1)$ with the domain $H_N^2(0,1)$.
\begin{proposition}\label{S2P1}
Let $\{\mu_j^{(n)}\}_{j=0}^{\infty}$ denote the eigenvalues of (\ref{EVPSFE}).
Then, the following (i)--(iv) hold:\\
(i) The eigenvalues $\mu^{(n)}_j$, $j\ge 0$, are simple and
\begin{equation}\label{S2P1E0}
\mu^{(n)}_0>\mu^{(n)}_1>\cdots >\mu^{(n)}_{n-1}>0>\mu^{(n)}_n>\mu^{(n)}_{n+1}>\cdots.
\end{equation}
Moreover, $\mu^{(n)}_0$ and $\mu^{(n)}_n$ are given as the roots of $\mu^2+2\mu-3(1-\rho^2)^2=0$, {\it i.e.,}
\begin{equation}\label{S2P1E1}
\mu^{(n)}_0=-1+\sqrt{1+3(1-\rho^2)^2}\ \ \textrm{and}\ \ \mu^{(n)}_n=-1-\sqrt{1+3(1-\rho^2)^2}.
\end{equation}
(ii) Let $\psi^{(n)}_j(x)$, $j\ge 0$, denote an eigenfunction associated with $\mu^{(n)}_j$.
Then $\psi^{(n)}_j$, $j=0,2,4,\ldots$, are symmetric and $\psi^{(n)}_j$, $j=1,3,5,\ldots$, are anti-symmetric.
Here, we call $\psi(x)$ symmetric if $\psi(x)=\psi(1-x)$ for $0\le x\le 1$, and call $\psi(x)$ anti-symmetric if $\psi(x)=-\psi(1-x)$ for $0\le x\le 1$.\\
(iii) The following asymptotic formulas hold: As $\e\to 0$,
\begin{align*}
&\mu^{(n)}_j=96\left(\cos^2\frac{j\pi}{2n}\right)\exp\left(-\frac{\sqrt{2}}{n\e}\right)(1+o(1)),&&\textrm{for}\ 0\le j<n,\ \\
&\mu^{(n)}_j=-\frac{3}{2}+12\left(\cos\frac{(j-n)\pi}{n}\right)\exp\left(-\frac{1}{\sqrt{2}n\e}\right)(1+o(1)),&&\textrm{for}\ n\le j<2n,\ \\
&\mu^{(n)}_{2n}=-2-96\exp\left(-\frac{\sqrt{2}}{n\e}\right)(1+o(1)),&& {} \\
&\mu^{(n)}_j=-2-(j-2n)^2\pi^2\e^2(1+o(1)),&&\textrm{for}\ j>2n.\ 
\end{align*}
\end{proposition}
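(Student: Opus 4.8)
The plan is to reduce the scalar eigenvalue problem \rf{EVPSFE} to the classical Lamé equation, read off the explicit band‑edge eigenfunctions, and then combine Sturm--Liouville oscillation theory with a reflection symmetry. First I substitute $y=(2nx+1)K(\kn)$ and use \rf{e} to eliminate $\e$. Since $(u^{\pm}_n)^2=\frac{2\kn^2}{1+\kn^2}\sn^2(y,\kn)$ and $\e^2=\frac{1}{4n^2(1+\kn^2)K(\kn)^2}$, writing $\psi(x)=\Lambda(y)$ the equation $L\psi=\mu\psi$ becomes, after multiplication by $1+\kn^2$,
\[
\Lambda_{yy}-6\kn^2\sn^2(y,\kn)\,\Lambda=(1+\kn^2)(\mu-1)\Lambda ,
\]
the Jacobi-form Lamé equation with $\ell(\ell+1)=6$, i.e. $\ell=2$. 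This is the integer (finite-gap) case, which has exactly $2\ell+1=5$ Lamé polynomials serving as band edges: $\operatorname{cn}\operatorname{dn}$, $\sn\operatorname{dn}$, $\sn\operatorname{cn}$ and $\sn^2-c_{\pm}$, where $c_{\pm}$ solve $3\kn^2c^2-2(1+\kn^2)c+1=0$. Differentiating each gives its eigenvalue; writing $h:=(1+\kn^2)(1-\mu)$, the two coming from $\sn^2-c_{\pm}$ satisfy $h^2-4(1+\kn^2)h+12\kn^2=0$, which, using $1-\rho^2=\frac{1-\kn^2}{1+\kn^2}$, is precisely $\mu^2+2\mu-3(1-\rho^2)^2=0$, whose roots are the explicit values in \rf{S2P1E1}.

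Next I identify which band edges survive the boundary conditions. The endpoints $x=0,1$ correspond to $y=K(\kn)$ and $y=(2n+1)K(\kn)$, both zeros of $\operatorname{cn}$, and $\Lambda'(K(\kn))=0$ holds exactly for $\sn^2-c_{\pm}$ and $\sn\operatorname{dn}$ (whose derivatives carry a $\operatorname{cn}$ factor), whereas $\operatorname{cn}\operatorname{dn}$ and $\sn\operatorname{cn}$ fail it; the identity $\sn^2((2n(1-x)+1)K(\kn),\kn)=\sn^2((2nx+1)K(\kn),\kn)$ gives the same at the other endpoint, so three explicit Neumann eigenfunctions survive. Since \rf{EVPSFE} is a regular self-adjoint Sturm--Liouville problem with separated boundary conditions (eigenvalues simple, bounded above, accumulating only at $-\infty$, and the $j$-th eigenfunction has exactly $j$ interior zeros), I count interior zeros on $(0,1)$: none for $\sn^2-c_+$ (nodeless, as $c_+>1$), so it is the ground state $\mu^{(n)}_0=-1+\sqrt{1+3(1-\rho^2)^2}$; $n$ for $\sn\operatorname{dn}$ (the zeros of $\sn$), giving $\mu^{(n)}_n=-3\kn^2/(1+\kn^2)$; and $2n$ for $\sn^2-c_-$ (as $\sn^2$ is monotone between $0$ and $1$ on each of the $2n$ subintervals of length $K(\kn)$), giving the level $-1-\sqrt{1+3(1-\rho^2)^2}$ at index $2n$. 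Finally, $\mu=0$ is \emph{not} attained, since its only candidate eigenfunction $\operatorname{cn}\operatorname{dn}\propto(u^{\pm}_n)'$ violates the Neumann condition; it is the lower edge of the top band, and a standard oscillation (rotation-number) count over the $n$ periods shows the top band carries exactly $n$ Neumann eigenvalues, all in $(0,\mu^{(n)}_0]$. This yields \rf{S2P1E0}, in particular $\mu^{(n)}_{n-1}>0>\mu^{(n)}_n$.

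For (ii), the potential $f'(u^{\pm}_n)=1-3(u^{\pm}_n)^2$ is invariant under $x\mapsto 1-x$ by the $\sn^2$ identity above, so the reflection $R\psi(x):=\psi(1-x)$ commutes with $L$. As every eigenvalue is simple, each eigenfunction is an eigenvector of $R$, hence symmetric or anti-symmetric. To obtain the alternation with $j$ I restrict to $(0,1/2)$: symmetric eigenfunctions solve the Neumann--Neumann problem there and anti-symmetric ones the Neumann--Dirichlet problem, and these two spectra strictly interlace, forcing the parity to alternate from the symmetric ground state onward.

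The asymptotics (iii) are the main obstacle. From \rf{e}, as $\kn\to1$ one has $\sqrt{1+\kn^2}\,K(\kn)=\frac{1}{2n\e}$ with $K(\kn)\sim\log(4/k')$, $k':=\sqrt{1-\kn^2}$, whence $k'^2\sim 16\,e^{-1/(\sqrt{2}\,n\e)}$, and the three explicit levels expand at once: $\mu^{(n)}_0\sim 96\,e^{-\sqrt{2}/(n\e)}$, the $\sn\operatorname{dn}$ level $-3\kn^2/(1+\kn^2)\sim-\tfrac32+12\,e^{-1/(\sqrt{2}\,n\e)}$, and $-1-\sqrt{1+3(1-\rho^2)^2}\sim-2-96\,e^{-\sqrt{2}/(n\e)}$, recovering the $j=0$, $j=n$, $j=2n$ cases. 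The difficult point is the \emph{whole} list. As $\kn\to1$ the elliptic functions degenerate ($\sn\to\tanh$, $\operatorname{cn},\operatorname{dn}\to\operatorname{sech}$) and the profile becomes $n$ well-separated layers, so the remaining eigenvalues split off the band edges by exponentially small tunneling amounts. I would run a Lyapunov--Schmidt (tight-binding) reduction: near each layer sit an approximate translation mode and an amplitude mode, and their exponentially small couplings assemble into tridiagonal matrices with Neumann ends whose eigenvalues produce the $\cos^2(j\pi/2n)$ pattern in the top band and the $\cos((j-n)\pi/n)$ pattern in the middle band, the exact edges found above fixing the prefactors $96$ and $12$. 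For $j>2n$ the potential is $\approx-2$ off the layers, so $\mu^{(n)}_j$ tracks the Neumann spectrum of $\e^2\partial_{xx}-2$, giving $-2-(j-2n)^2\pi^2\e^2$. The real work is to control the reduction errors so that they are dominated by these exponentially small splittings and to extract the constants precisely, which is where matched asymptotics together with the expansions of $K$ and $E$ near $k=1$ are needed.
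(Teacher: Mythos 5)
Your route is genuinely different from the paper's and, for parts (i)--(ii), essentially complete where the paper is not self-contained: the paper disposes of simplicity and \rf{S2P1E0} by uniqueness for the ODE plus a time-map/Sturm comparison argument with details omitted, quotes the exact values \rf{S2P1E1} from \cite[Theorem 1]{W06}, and quotes all of (iii) from \cite[Theorems 1.1 and 1.2]{WY15}; you instead rederive everything from the $\ell=2$ Lam\'e reduction. Your algebra checks out: with $h=(1+\kn^2)(1-\mu)$ the quadratic $h^2-4(1+\kn^2)h+12\kn^2=0$ is indeed equivalent to $\mu^2+2\mu-3(1-\rho^2)^2=0$; the derivatives of $\sn^2-c_\pm$ and $\sn\operatorname{dn}$ carry a factor $\operatorname{cn}$, which vanishes at $y=K(\kn)$ and $y=(2n+1)K(\kn)$, while $\operatorname{cn}\operatorname{dn}\propto (u^{\pm}_n)'$ and $\sn\operatorname{cn}$ fail the Neumann condition there; and a direct computation confirms the level $-3\kn^2/(1+\kn^2)$ for $\sn\operatorname{dn}$. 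This buys a proof independent of \cite{W06} and makes transparent which band edges survive the boundary conditions. However, there are two substantive points. First, part (iii) beyond the three explicit levels $j=0,n,2n$ is only a program in your write-up: the tight-binding/Lyapunov--Schmidt reduction with error control below the exponentially small splittings is precisely the content of \cite{WY15} and is not carried out, so as it stands the full lists for $0\le j<n$, $n\le j<2n$ and $j>2n$ remain unproven in your argument (the paper closes this by citation).

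Second, and more delicately: your (correct) zero counts actually contradict the indexing in \rf{S2P1E1} as printed. The eigenfunction $\sn\operatorname{dn}$ satisfies the Neumann conditions and has exactly $n$ interior zeros, so by Sturm--Liouville theory $\mu^{(n)}_n=-3\kn^2/(1+\kn^2)$, whereas the negative root $-1-\sqrt{1+3(1-\rho^2)^2}$ belongs to $\sn^2-c_-$, which has $2n$ interior zeros, i.e.\ it is $\mu^{(n)}_{2n}$. This assignment is the one consistent with part (iii) of the very same statement: at $j=n$ the asymptotics give $-\tfrac32+12e^{-1/(\sqrt2 n\e)}(1+o(1))$, exactly the expansion of $-3\kn^2/(1+\kn^2)$ via \rf{S2P4E0}, while $-1-\sqrt{1+3(1-\rho^2)^2}=-2-96e^{-\sqrt2/(n\e)}(1+o(1))$ matches the stated $\mu^{(n)}_{2n}$. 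So the second identity in \rf{S2P1E1} carries an index slip ($n$ should read $2n$), your argument proves the corrected assertion, and you should have flagged this discrepancy explicitly rather than silently establishing a different identity than the one stated. Note that the paper's later uses survive the correction: the nonvanishing of the denominator in Proposition~\ref{S2P2} and the root location in the proof of Theorem~\ref{T2}~(ii) only need that both roots of $\mu^2+2\mu-3(1-\rho^2)^2$ lie in $\sigma_L$, which your computation confirms.
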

\begin{proof}
It is well known that $\mu^{(n)}_j$, $j\ge 0$, becomes simple because of the uniqueness of the solution of the ODE in (\ref{EVPSFE}).
The inequalities (\ref{S2P1E0}) follow from the monotonicity of a time-map for (\ref{SFE}) and Sturm's comparison theorem.
We omit the details.
The exact eigenvalues (\ref{S2P1E1}) are given in \cite[Theorem 1]{W06}.
The assertion (ii) follows from an easy symmetric argument, since $f'(u^{\pm}_n)$ is symmetric.
The assertion (iii) was proved in \cite[Theorems 1.1 and 1.2]{WY15}.
\end{proof}

In the proof of main theorems we study the linear operator with nonlocal term.
\begin{proposition}\label{S2P5}
Let $A[\,\cdot\,]:=\left<\,\cdot\,,\Phi\right>\Psi$ for some $\Phi,\Psi\in L^2(0,1)$.
We define $h(\lambda)$ by
\begin{equation}\label{S2P5E1}
h(\lambda):=1+\left<(L-\lambda)^{-1}[\Psi],\Phi\right>\ \ \textrm{for}\ \ \lambda\in\C\setminus\sigma_L.
\end{equation}
(i) If $\lambda\in\C\setminus\sigma_L$ and $h(\lambda)\neq 0$, then $L+A-\lambda$ is invertible, and
\begin{equation}\label{S2P5E2}
(L+A-\lambda)^{-1}=\left({\rm Id}-\frac{(L-\lambda)^{-1}A}{h(\lambda)}\right)(L-\lambda)^{-1}.
\end{equation}
(ii) If $\lambda\in\C\setminus\sigma_L$ and $h(\lambda)=0$, then $\lambda$ is an eigenvalue of $L+A-\lambda$, the geometric multiplicity of $\lambda$ is one and the corresponding eigenfunction is $(L-\lambda)^{-1}[\Psi]$.
\end{proposition}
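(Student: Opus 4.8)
The plan is to treat $A$ as a bounded rank-one perturbation of $L-\lambda$ and to verify a Sherman--Morrison-type resolvent identity by direct computation. Throughout I write $R_\lambda:=(L-\lambda)^{-1}$, which for $\lambda\in\C\setminus\sigma_L$ is a bounded operator from $L^2(0,1)$ onto the domain $H^2_N(0,1)$ of $L$. The single algebraic fact that drives both parts is that, since $A[u]=\langle u,\Phi\rangle\Psi$,
\[
AR_\lambda A[u]=\langle u,\Phi\rangle\,\langle R_\lambda\Psi,\Phi\rangle\,\Psi=(h(\lambda)-1)\,A[u],
\]
using $\langle R_\lambda\Psi,\Phi\rangle=h(\lambda)-1$ from \eqref{S2P5E1}. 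Thus $AR_\lambda A=(h(\lambda)-1)A$ as operators, and this is the only nontrivial computation needed.

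For part (i) I would set $B:=\bigl({\rm Id}-h(\lambda)^{-1}R_\lambda A\bigr)R_\lambda$ and check that $B$ is a two-sided inverse of $L+A-\lambda$. From $(L-\lambda)R_\lambda={\rm Id}$ one gets $(L+A-\lambda)R_\lambda={\rm Id}+AR_\lambda$; feeding in the identity above yields $(L+A-\lambda)R_\lambda AR_\lambda=h(\lambda)\,AR_\lambda$, and hence $(L+A-\lambda)B={\rm Id}$. Symmetrically, on the domain $H^2_N(0,1)$ one has $R_\lambda(L-\lambda)={\rm Id}$, so $R_\lambda(L+A-\lambda)={\rm Id}+R_\lambda A$, and the same identity gives $B(L+A-\lambda)={\rm Id}$. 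Since $L+A-\lambda$ is closed on $H^2_N(0,1)$ and $B$ is bounded, the existence of a bounded two-sided inverse establishes invertibility and identifies the inverse with $B$, which is exactly \eqref{S2P5E2}.

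For part (ii) I assume $h(\lambda)=0$ and set $v:=R_\lambda\Psi=(L-\lambda)^{-1}[\Psi]$. A direct computation gives
\[
(L+A-\lambda)v=(L-\lambda)v+A[v]=\Psi+\langle R_\lambda\Psi,\Phi\rangle\Psi=h(\lambda)\Psi=0,
\]
so $v\in H^2_N(0,1)$ is an eigenfunction of $L+A$ for the eigenvalue $\lambda$; it is nonzero because $\Psi\neq0$ (otherwise $h\equiv1$, contradicting $h(\lambda)=0$) and $R_\lambda$ is injective. To obtain geometric multiplicity one, I would take any $w$ with $(L+A-\lambda)w=0$, so that $(L-\lambda)w=-\langle w,\Phi\rangle\Psi$. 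Injectivity of $L-\lambda$ forces $\langle w,\Phi\rangle\neq0$ (else $w=0$), and applying $R_\lambda$ gives $w=-\langle w,\Phi\rangle\,v$, a scalar multiple of $v$.

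The whole argument is routine; the only point requiring care is that $L$ is unbounded, so one cannot invoke finite-dimensional reasoning and must verify both the left and the right inverse in part (i), keeping in mind that $R_\lambda(L-\lambda)={\rm Id}$ holds only on the domain $H^2_N(0,1)$. Beyond this bookkeeping I do not anticipate any genuine obstacle.
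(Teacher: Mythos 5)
Your proof is correct and follows essentially the same route as the paper: in (i) you verify the Sherman--Morrison formula as a two-sided inverse (the paper simply states the two identities ``can be checked,'' while you make them explicit via $AR_\lambda A=(h(\lambda)-1)A$), and in (ii) you show, exactly as the paper does, that any kernel element of $L+A-\lambda$ must be a scalar multiple of $(L-\lambda)^{-1}[\Psi]$ and that this function is indeed an eigenfunction when $h(\lambda)=0$. Your added care about the unbounded domain, the left inverse holding only on $H^2_N(0,1)$, and the nonvanishing of $(L-\lambda)^{-1}[\Psi]$ are details the paper glosses over, but they do not change the method.
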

\begin{proof}
(i) The formula (\ref{S2P5E2}) with (\ref{S2P5E1}) is called the Sherman-Morrison formula.
We can check that if $\lambda\in\C\setminus\sigma_L$ and $h(\lambda)\neq 0$, then
\begin{align*}
\left({\rm Id}-\frac{(L-\lambda)^{-1}A}{h(\lambda)}\right)(L-\lambda)^{-1}(L+A-\lambda)=\Id,\\
(L+A-\lambda)\left({\rm Id}-\frac{(L-\lambda)^{-1}A}{h(\lambda)}\right)(L-\lambda)^{-1}=\Id.
\end{align*}
(ii) We consider the equation $(L+A-\lambda)\phi=0$. Then
\[
(L-\lambda)\phi+\left<\phi,\Phi\right>\Psi=0.
\]
Since $\lambda\not\in\sigma_L$, we have $\phi=-\left<\phi,\Phi\right>(L-\lambda)^{-1}[\Psi]$.
Therefore, the eigenfunction should be $\phi=c(L-\lambda)^{-1}[\Psi]$, $c\in\C$.
Thus the geometric multiplicity is one.
Moreover, when $\phi=c(L-\lambda)^{-1}[\Phi]$, we have
\[
(L+A-\lambda)\phi=c\left(1+\left<(L-\lambda)^{-1}[\Psi],\Phi\right>\right)\Psi=0,
\]
where we use $h(\lambda)=0$.
Thus, $\phi$ is an eigenfunction.
\end{proof}
Algebraic multiplicities of complex eigenvalues are studied in Lemma~\ref{L2+}.

Propositions~\ref{S2P2} and \ref{S2P3} are important to calculate $h(\lambda)$ defined by (\ref{S2P5E1}).
\begin{proposition}\label{S2P2}
Let $\lambda\in\C\setminus\sigma_L$.
Then, $\phi:=(L-\lambda)^{-1}[1]$ can be written explicitly as follows:
\begin{equation}\label{S2P2E0}
\phi=(L-\lambda)^{-1}[1]=\frac{-(3+\lambda)+3(u^{\pm}_n)^2}{\lambda^2+2\lambda-3(1-\rho^2)^2}.
\end{equation}
\end{proposition}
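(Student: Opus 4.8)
The plan is to verify the formula by reducing the infinite-dimensional resolvent problem to a $2\times2$ linear system. Write $u:=u^{\pm}_n$. The key observation is that, although $L$ acts on all of $H^2_N(0,1)$, its action on the subspace $\operatorname{span}\{1,u^2\}$ is \emph{closed}, so one may look for $\phi$ inside this two-dimensional subspace. First I would record the two identities satisfied by $u$ as a solution of \rf{SFE}: the equation itself gives $\e^2u''=u^3-u$, and multiplying by $u'$ and integrating yields the first integral $\tfrac{\e^2}{2}(u')^2+\tfrac12 u^2-\tfrac14 u^4\equiv c$. Evaluating at $x=0$, where $u'(0)=0$ and $u(0)=\rho$, fixes the constant $c$ and gives $\e^2(u')^2=\rho^2-\tfrac12\rho^4-u^2+\tfrac12 u^4$.

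Next I would compute the action of $L=\e^2\partial_{xx}+1-3u^2$ on the two generators. On constants one has $L[1]=1-3u^2$. For the quadratic one differentiates twice, $\e^2(u^2)''=2\e^2(u')^2+2u\,\e^2u''$, and substitutes both identities above; after the cubic and quartic contributions cancel, this collapses to the clean expression $L[u^2]=2\rho^2-\rho^4-3u^2$. Since both $L[1]$ and $L[u^2]$ again lie in $\operatorname{span}\{1,u^2\}$, this subspace is indeed $L$-invariant, and the equation $(L-\lambda)\phi=1$ becomes a finite linear system.

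Accordingly I would set $\phi=a+bu^2$ with $a,b\in\C$ to be determined, compute
\[
(L-\lambda)\phi=\big(a(1-\lambda)+b(2\rho^2-\rho^4)\big)+\big(-3a-(3+\lambda)b\big)u^2,
\]
and match the right-hand side $1$. The vanishing of the $u^2$-coefficient forces $3a=-(3+\lambda)b$, and the constant term then determines $b$; a short computation shows that the common denominator appearing is exactly $D:=\lambda^2+2\lambda-3(1-\rho^2)^2$, yielding $b=3/D$ and $a=-(3+\lambda)/D$, which is precisely \rf{S2P2E0}. The boundary conditions are then automatic: $\phi'=2b\,uu'$ vanishes at $x=0,1$ because $u'(0)=u'(1)=0$, so $\phi\in H^2_N(0,1)$.

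The only point that needs genuine care—and the natural place for a sign error—is the computation of $L[u^2]$, where the first integral must be used to eliminate $(u')^2$ and the quartic terms must cancel exactly. It is also worth verifying that the denominator $D$ is nonzero precisely on the stated domain: by \rf{S2P1E1} the roots of $\mu^2+2\mu-3(1-\rho^2)^2=0$ are the eigenvalues $\mu^{(n)}_0$ and $\mu^{(n)}_n$ of $L$, so $D=(\lambda-\mu^{(n)}_0)(\lambda-\mu^{(n)}_n)\neq0$ whenever $\lambda\notin\sigma_L$. This both makes the $2\times2$ system solvable and guarantees the displayed formula is meaningful. Finally, since $L-\lambda$ is injective for $\lambda\notin\sigma_L$, the constructed $\phi$ is the unique solution, i.e.\ $\phi=(L-\lambda)^{-1}[1]$.
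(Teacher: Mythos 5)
Your proposal is correct and follows essentially the same route as the paper: both hinge on the first integral of \rf{SFE} yielding the identity $L[u^2]=2\rho^2-\rho^4-3u^2$, together with $L[1]=1-3u^2$ and the observation that $\lambda^2+2\lambda-3(1-\rho^2)^2$ has roots $\mu^{(n)}_0,\mu^{(n)}_n\in\sigma_L$. The only difference is presentational—the paper verifies $(L-\lambda)\phi=1$ for the given $\phi$ by direct substitution, while you derive the coefficients via the ansatz $\phi=a+bu^2$ in the $L$-invariant subspace $\operatorname{span}\{1,u^2\}$, and you make the boundary-condition and uniqueness checks explicit where the paper leaves them implicit.
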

\begin{proof}
Let $u:=u^{\pm}_n$ for simplicity.
Multiplying (\ref{SFE}) by $u'$ and integrating it over $[0,x]$ we have
\[
\frac{\e^2u'^2}{2}+\frac{u^2}{2}-\frac{u^4}{4}=\frac{\rho^2}{2}-\frac{\rho^4}{4},
\]
and hence
\begin{equation}\label{S2P2E1}
L[u^2]=2\e^2uu''+2\e^2u'^2+u^2-3u^4=2\rho^2-\rho^4-3u^2.
\end{equation}
Using $L[1]=1-3u^2$ and (\ref{S2P2E1}), by direct calculation we can check that $(L-\lambda)\phi=1$.
Note that the denominator of (\ref{S2P2E0}) is not zero because
\[
\lambda\not\in\{\mu^{(n)}_0,\mu^{(n)}_n\}\iff \lambda^2+2\lambda-3(1-\rho^2)^2\neq 0.
\]
\end{proof}

\begin{proposition}\label{S2P3}
Let $u^{\pm}_n$ be given by (\ref{un}) and let $k_n$ be given by (\ref{e}).
Then,
\[
\int_0^1u^{\pm}_n(x)^2dx=\frac{2}{1+\kn^2}\left(1-\frac{E(\kn)}{K(\kn)}\right).
\]
\end{proposition}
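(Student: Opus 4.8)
The plan is to reduce the integral to a standard complete-elliptic-integral identity via a change of variables together with the periodicity of $\sn^2$. First I would use the explicit form (\ref{un}) to write
\[
u^{\pm}_n(x)^2=\frac{2\kn^2}{1+\kn^2}\,\sn^2\bigl((2nx+1)K(\kn),\kn\bigr),
\]
where the sign $\pm$ is irrelevant after squaring. I then substitute $s=(2nx+1)K(\kn)$, so that $dx=ds/(2nK(\kn))$ and the endpoints $x=0,1$ become $s=K(\kn),(2n+1)K(\kn)$. This turns the integral into
\[
\int_0^1 u^{\pm}_n(x)^2\,dx=\frac{\kn^2}{n(1+\kn^2)K(\kn)}\int_{K(\kn)}^{(2n+1)K(\kn)}\sn^2(s,\kn)\,ds.
\]

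Next I would exploit the periodicity of $\sn^2(\,\cdot\,,\kn)$, whose period is $2K(\kn)$ (half the period $4K(\kn)$ of $\sn$). The interval $[K(\kn),(2n+1)K(\kn)]$ has length $2nK(\kn)$, i.e.\ exactly $n$ full periods, so by periodicity the integral over it equals $n\int_0^{2K(\kn)}\sn^2(s,\kn)\,ds=2n\int_0^{K(\kn)}\sn^2(s,\kn)\,ds$, the last step using the symmetry $\sn(2K(\kn)-s,\kn)=\sn(s,\kn)$ of $\sn^2$ about $s=K(\kn)$.

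The key single-period identity is $\int_0^{K(\kn)}\sn^2(s,\kn)\,ds=(K(\kn)-E(\kn))/\kn^2$, which I would derive from the relation $\operatorname{dn}^2(s,\kn)=1-\kn^2\sn^2(s,\kn)$ together with the representation $E(\kn)=\int_0^{K(\kn)}\operatorname{dn}^2(s,\kn)\,ds$; the latter follows from the definition (\ref{E}) by the substitution $\sin\theta=\sn(s,\kn)$, $d\theta=\operatorname{dn}(s,\kn)\,ds$. Integrating gives $E(\kn)=K(\kn)-\kn^2\int_0^{K(\kn)}\sn^2(s,\kn)\,ds$, hence the claimed formula. Substituting back and cancelling $\kn^2$ and $n$ yields
\[
\int_0^1 u^{\pm}_n(x)^2\,dx=\frac{2\bigl(K(\kn)-E(\kn)\bigr)}{(1+\kn^2)K(\kn)}=\frac{2}{1+\kn^2}\left(1-\frac{E(\kn)}{K(\kn)}\right),
\]
as asserted.

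There is no real obstacle here: the computation is entirely routine once the substitution and the periodicity are in place. The only point requiring slight care is confirming that the shifted interval $[K(\kn),(2n+1)K(\kn)]$ consists of an integer number of periods of $\sn^2$, so that the reduction to $\int_0^{K(\kn)}\sn^2$ is exact; this is precisely what makes the dependence on $n$ cancel and produces the clean closed form. Since these elementary facts about $\sn$, $\operatorname{dn}$, $K$, and $E$ are recorded in the Appendix, they can be invoked directly rather than reproved.
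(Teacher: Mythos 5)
Your proof is correct and follows essentially the same route as the paper's: square $u^{\pm}_n$, change variables to reduce the integral to $\sn^2$ over $n$ full periods of length $2K(\kn)$, and apply the single-period identity $\int_0^{K(\kn)}\sn^2(s,\kn)\,ds=(K(\kn)-E(\kn))/\kn^2$, which the paper cites directly as Lemma~\ref{AL3}. The only (harmless) difference is that you also re-derive that identity from $E(\kn)=\int_0^{K(\kn)}\operatorname{dn}^2(s,\kn)\,ds$, whereas the paper simply invokes the Appendix.
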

\begin{proof}
By Lemma~\ref{AL3} we have
\begin{multline*}
\int_0^1u^{\pm}_n(x)^2dx
=\frac{2\kn^2}{1+\kn^2}\int_0^1\sn^2\left((2nx+1)K(\kn),\kn\right)dx\\
=\frac{2\kn^2}{1+\kn^2}\int_0^{2nK(\kn)}\sn^2\left(y+K(\kn),\kn\right)\frac{dy}{2nK(\kn)}
=\frac{2\kn^2}{2n(1+\kn^2)K(\kn)}\frac{K(\kn)-E(\kn)}{\kn^2}2n.
\end{multline*}
Then the conclusion holds.
\end{proof}

The following proposition will be used to calculate (\ref{T2F1}), (\ref{T2F2}) and (\ref{T2F3}).
\begin{proposition}\label{S2P4}
Let $\kn$ be given by (\ref{e}).
Then,
\begin{equation}\label{S2P4E0}
1-\kn^2=16\exp\left(-\frac{1}{\sqrt{2}n\e}\right)(1+o(1))\ \ \textrm{as}\ \ \e\to 0.
\end{equation}
\end{proposition}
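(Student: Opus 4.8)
The plan is to start from the defining relation (\ref{e}), namely $\sqrt{1+\kn^2}\,K(\kn)=\frac{1}{2n\e}$, and to extract the asymptotics of $1-\kn^2$ by inverting the known behaviour of the complete elliptic integral $K$ as its modulus approaches $1$. First I would observe that as $\e\to 0$ the right-hand side $\frac{1}{2n\e}$ tends to $\infty$, whereas $\sqrt{1+\kn^2}$ stays bounded in $[1,\sqrt{2}]$; hence $K(\kn)\to\infty$, which forces $\kn\to 1$. This is exactly the regime in which the near-$1$ expansion of $K$ is available, so the substitution is justified.

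The key input is the classical logarithmic singularity of $K$: as $k\to 1$,
\[
K(k)=\ln\frac{4}{\sqrt{1-k^2}}+o(1),
\]
which I would quote from the Appendix (or a standard reference on elliptic integrals). Writing $m:=1-\kn^2$ and $\sqrt{1+\kn^2}=\sqrt{2-m}=\sqrt{2}\,(1+O(m))$, substitution into (\ref{e}) gives
\[
\sqrt{2}\left(\ln 4-\tfrac{1}{2}\ln m\right)+o(1)=\frac{1}{2n\e}.
\]
Solving this for $\ln m$ yields
\[
\ln m=-\frac{1}{\sqrt{2}\,n\e}+\ln 16+o(1)\qquad\text{as}\ \e\to 0,
\]
where the doubling $2\ln 4=\ln 16$ produces the prefactor, and the factor $\sqrt{2}$ coming from $\sqrt{1+\kn^2}\to\sqrt{2}$ is precisely what converts $\frac{1}{2n\e}$ into the exponent $\frac{1}{\sqrt{2}\,n\e}$. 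Exponentiating gives $m=16\,\exp\!\bigl(-\frac{1}{\sqrt{2}\,n\e}\bigr)(1+o(1))$, which is (\ref{S2P4E0}).

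The one point to handle with care is the bookkeeping of the error terms so that the claimed multiplicative factor $(1+o(1))$ survives exponentiation: I must check that every correction beyond the displayed $\ln$ and constant terms is genuinely $o(1)$ as $\e\to 0$, rather than merely $O(1)$, so that $\exp(o(1))=1+o(1)$. This uses the sharp form $K(k)-\ln(4/\sqrt{1-k^2})\to 0$, together with $\sqrt{2-m}-\sqrt{2}=O(m)=o(1)$ and $m\to 0$; in particular the cross term $O(m)\cdot\ln(1/m)=O(m\ln(1/m))$ is $o(1)$. Retaining the $\ln 4$ term in the $K$-expansion and doubling it through the factor $\sqrt{2}$ is what fixes the constant $16$ correctly, and this is the only genuinely delicate step; everything else is a direct substitution and a linear solve.
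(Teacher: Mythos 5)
Your proposal is correct and follows essentially the same route as the paper: both substitute the logarithmic expansion $K(k)=2\log 2+\log\frac{1}{\sqrt{1-k^2}}+o(1)$ (Lemma~\ref{AL2}) into the defining relation (\ref{e}), solve for $\log(1-\kn^2)$, and exponentiate. Your handling of the cross term via $O\bigl(m\log(1/m)\bigr)=o(1)$ is just a repackaging of the paper's intermediate step (\ref{S2P4E2}), which establishes the same estimate in the form $(1-\kn^2)K(\kn)\to 0$, so the two arguments coincide in substance.
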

\begin{proof}
The proof can be found in \cite[Proposition~3.1]{WY15}.
However, we show the proof for readers' convenience.

It follows from Lemma~\ref{AL2} that
\begin{equation}\label{S2P4E1-}
K(\kn)=2\log 2+\log\frac{1}{\sqrt{1-\kn^2}}+o(1)\ \ \textrm{as}\ \ \kn\to 1.
\end{equation}
Using (\ref{e}), we have
\[
\frac{1}{2n\e}=\sqrt{1+\kn^2}K(\kn)
=\sqrt{1+\kn^2}\left(2\log 2+\log\frac{1}{\sqrt{1-\kn^2}}+o(1)\right)\ \ \textrm{as}\ \ \kn\to 1,
\]
and hence
\begin{multline}\label{S2P4E1}
-2\log\frac{1}{\sqrt{1-\kn^2}}=4\log 2-\frac{1}{\sqrt{1+\kn^2}n\e}+o(1)\\
=4\log 2-\frac{1}{\sqrt{2}n\e}-\frac{1-\kn^2}{\sqrt{2(1+\kn^2)}(\sqrt{1+\kn^2}+\sqrt{2})n\e}+o(1)
\ \ \textrm{as}\ \ \kn\to 1.
\end{multline}
By \eqref{S2P4E1-} and (\ref{e}) we have
\begin{equation}\label{S2P4E2}
\frac{1-\kn^2}{\e}=2n\sqrt{1+\kn^2}(1-\kn^2)K(k_n)\to 0\ \ \textrm{as}\ \ \kn\to 1.
\end{equation}
The formula (\ref{S2P4E0}) follows from (\ref{S2P4E1}) and (\ref{S2P4E2}).
\end{proof}

\section{Critical eigenvalues and Hopf bifurcation}
Let $(u^{\pm}_n,0)$ be internal $n$-layer stationary solutions of (\ref{S1E1}) given by (\ref{un}), and let $\kn$ be a unique solution of (\ref{e}).
In this section we study the eigenvalue problem (\ref{EVP}).

The following cubic polynomial is important in the study of complex eigenvalues:
\begin{multline}\label{g}
g(\lambda,\tau):=
\frac{\tau}{\gamma}\lambda^3+\left(\frac{2\tau}{\gamma}+1\right)\lambda^2
+\left\{\frac{\alpha\beta}{\gamma}+2-\frac{3\tau}{\gamma}(1-\rho^2)^2\right\}\lambda\\
+\frac{3\alpha\beta}{\gamma}\left(1-\left<(u^{\pm}_n)^2,1\right>\right)
-3(1-\rho^2)^2.
\end{multline}
Let $\sigma_g$ be the roots of $g(\,\cdot\,,\tau)=0$, {\it i.e.},
\[
\sigma_g:=\{\lambda\in\C;\ g(\lambda,\tau)=0\}.
\]
Let
\[
\sigma_0:=\left\{-\frac{\gamma}{\tau}\right\}.
\]
\begin{lemma}\label{L1}
Let $n\ge 1$.
The following holds:
\begin{equation}\label{L1E0}
\sigma_{\calL}\subset \sigma_L\cup\sigma_g\cup\sigma_0.
\end{equation}
\end{lemma}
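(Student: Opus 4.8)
The plan is to start from an arbitrary $\lambda\in\sigma_{\calL}$ together with an eigenpair $(\phi,\eta)\neq(0,0)$, $\phi\in H^2_N(0,1)$ and $\eta\in\C$, unpack the defining relation $\calL(\phi,\eta)^{\mathrm T}=\lambda(\phi,\eta)^{\mathrm T}$ into
\begin{align*}
L\phi-\alpha\eta&=\lambda\phi,\\
\frac{\beta}{\tau}\left<\phi,1\right>-\frac{\gamma}{\tau}\eta&=\lambda\eta,
\end{align*}
where in the first equation $\eta$ denotes the constant function of value $\eta$, and then to argue by cases according to whether $\lambda$ already belongs to $\sigma_L\cup\sigma_0$. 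If $\lambda\in\sigma_L\cup\sigma_0$ there is nothing to prove, so I would assume $\lambda\notin\sigma_L\cup\sigma_0$ throughout the rest of the argument; the goal is then to show $\lambda\in\sigma_g$.

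Since $\lambda\neq-\gamma/\tau$, the second equation is solvable for $\eta$, giving $\eta=\frac{\beta}{\tau\lambda+\gamma}\left<\phi,1\right>$. Substituting this into the first equation produces $(L-\lambda)\phi=\frac{\alpha\beta}{\tau\lambda+\gamma}\left<\phi,1\right>\cdot 1$, and because $\lambda\notin\sigma_L$ the operator $L-\lambda$ is invertible on $H^2_N(0,1)$, so that
\[
\phi=\frac{\alpha\beta}{\tau\lambda+\gamma}\left<\phi,1\right>(L-\lambda)^{-1}[1].
\]
I would first observe that $\left<\phi,1\right>\neq0$: otherwise the displayed identity forces $\phi=0$, whence $\eta=0$ as well, contradicting $(\phi,\eta)\neq(0,0)$. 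Taking the $L^2$-inner product of the displayed identity with $1$ (real, so no conjugation enters), using linearity in the first slot, and cancelling the nonzero factor $\left<\phi,1\right>$, I obtain the scalar characteristic relation
\[
1=\frac{\alpha\beta}{\tau\lambda+\gamma}\left<(L-\lambda)^{-1}[1],1\right>.
\]

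At this point I would insert the explicit resolvent formula from Proposition~\ref{S2P2}, namely $(L-\lambda)^{-1}[1]=\bigl(-(3+\lambda)+3(u^{\pm}_n)^2\bigr)/\bigl(\lambda^2+2\lambda-3(1-\rho^2)^2\bigr)$, and integrate over $(0,1)$ to get $\left<(L-\lambda)^{-1}[1],1\right>=\bigl(-(3+\lambda)+3\left<(u^{\pm}_n)^2,1\right>\bigr)/\bigl(\lambda^2+2\lambda-3(1-\rho^2)^2\bigr)$. Both denominators $\tau\lambda+\gamma$ and $\lambda^2+2\lambda-3(1-\rho^2)^2$ are nonzero here: the first because $\lambda\notin\sigma_0$, the second because $\lambda\notin\sigma_L\supset\{\mu^{(n)}_0,\mu^{(n)}_n\}$ and, by Proposition~\ref{S2P2}, $\lambda^2+2\lambda-3(1-\rho^2)^2=0$ exactly at $\lambda\in\{\mu^{(n)}_0,\mu^{(n)}_n\}$. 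Clearing both denominators in the characteristic relation and dividing by $\gamma$ yields, after collecting powers of $\lambda$, precisely the cubic $g(\lambda,\tau)=0$ of (\ref{g}); hence $\lambda\in\sigma_g$, which proves the inclusion (\ref{L1E0}).

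I do not anticipate a genuine obstacle, since the whole argument is an elimination of $\eta$ followed by a single application of the resolvent identity in Proposition~\ref{S2P2}. The only points demanding care are purely bookkeeping: keeping the case exclusions explicit so that every denominator that appears is guaranteed nonzero, verifying $\left<\phi,1\right>\neq0$ before cancelling it, and carrying out the term-by-term expansion so that the cleared relation matches (\ref{g}) exactly, in particular checking that the constant term reproduces both $\frac{3\alpha\beta}{\gamma}\bigl(1-\left<(u^{\pm}_n)^2,1\right>\bigr)$ and $-3(1-\rho^2)^2$ and that the coefficient of $\lambda$ reproduces $\frac{\alpha\beta}{\gamma}+2-\frac{3\tau}{\gamma}(1-\rho^2)^2$.
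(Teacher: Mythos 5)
Your proof is correct, but it runs in the opposite direction from the paper's. You take an eigenvalue $\lambda\in\sigma_{\calL}\setminus(\sigma_L\cup\sigma_0)$ with eigenpair $(\phi,\eta)$, eliminate $\eta$, invert $L-\lambda$, check $\left<\phi,1\right>\neq 0$, and read off the scalar relation $1=\frac{\alpha\beta}{\tau\lambda+\gamma}\left<(L-\lambda)^{-1}[1],1\right>$, which after inserting the explicit resolvent of Proposition~\ref{S2P2} and clearing the (verified nonzero) denominators is exactly $g(\lambda,\tau)=0$; your bookkeeping of the coefficients of $g$ is accurate. The paper instead proves the contrapositive at the level of the resolvent: for $\lambda\notin\sigma_L\cup\sigma_g\cup\sigma_0$ it solves $\left\{\calL-\lambda\,\Id\right\}(\phi,\eta)^{\mathrm T}=(\Phi,\Xi)^{\mathrm T}$ for arbitrary $(\Phi,\Xi)\in Y$, using the Sherman--Morrison formula of Proposition~\ref{S2P5}~(i) (with the same function $h$, whose nonvanishing is equivalent to $g(\lambda,\tau)\neq 0$) and the open mapping theorem, thereby exhibiting a bounded inverse of $\calL-\lambda\,\Id$. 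The difference matters in one respect: your argument establishes the inclusion only for the point spectrum, so it is complete only modulo the paper's standing assertion, made when $\sigma_{\calL}$ is introduced, that the spectrum of $\calL$ consists solely of eigenvalues (which ultimately rests on compactness of the resolvent, cf.\ Proposition~\ref{S5P1}); the paper's route needs no such input and in addition produces the explicit resolvent representation (\ref{L1E4})--(\ref{L1E5}) that feeds into the semigroup estimates later. What your route buys is brevity and transparency --- it isolates the characteristic equation $h(\lambda)=0$ in two lines without invoking Proposition~\ref{S2P5} or the open mapping theorem --- and it is the same elimination the paper itself reuses in the proof of Lemma~\ref{L2+}.
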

\begin{proof}
Let $\lambdaC\in\C\setminus(\sigma_L\cup\sigma_g\cup\sigma_0)$.
We show that, for every $(\PhiC,\XiC)\in Y$, the problem
\begin{equation}\label{L1E1}
\left\{\calL-(\lambdaC)\Id\right\}\left(
\begin{array}{c}
\phiC\\
\etaC
\end{array}
\right)=\left(
\begin{array}{c}
\PhiC\\
\XiC
\end{array}
\right)
\end{equation}
has a unique solution $(\phiC,\etaC)\in X$.
We write
\[
\lambda:=\lambdaC,\ \phi:=\phiC,\ \eta:=\etaC,\ \Phi:=\PhiC\ \textrm{and}\ \Xi:=\XiC.
\]
Since $\lambda\not\in\sigma_0$, by the second equation of (\ref{L1E1}) we have
\begin{equation}\label{L1E2}
\eta=\frac{\beta\left<\phi,1\right>-\tau\Xi}{\tau\lambda+\gamma}.
\end{equation}
Substituting (\ref{L1E2}) into the first equation of (\ref{L1E1}), we have
\begin{equation}\label{L1E3}
\left(L-\lambda\right)\phi-
\frac{\alpha\beta\left<\phi,1\right>}{\tau\lambda+\gamma}=
\Phi-\frac{\alpha\tau\Xi}{\tau\lambda+\gamma}.
\end{equation}
Let
\[
A\phi:=-\frac{\alpha\beta\left<\phi,1\right>}{\tau\lambda+\gamma}.
\]
Then, the linear operator defined by the LHS of (\ref{L1E3}) is $L+A-\lambda$.
Let $h(\lambda)$ be defined by (\ref{S2P5E1}).
Then, by Proposition~\ref{S2P2} we have
\begin{equation}\label{L1E3+}
h(\lambda)
=1-\frac{\alpha\beta\left<(L-\lambda)^{-1}[1],1\right>}{\tau\lambda+\gamma}
=1-\frac{\alpha\beta}{\tau\lambda+\gamma}\left<
\frac{-(3+\lambda)+3(u^{\pm}_n)^2}{\lambda^2+2\lambda-3(1-\rho^2)^2},1\right>.
\end{equation}
Since $\lambda\not\in \sigma_L\cup\sigma_0$, we see that $\lambda\not\in\{\mu^{(n)}_0,\mu^{(n)}_n,-\gamma/\tau\}$, and hence the denominator of (\ref{L1E3+}) does not vanish.
Then, $h(\lambda)=0$ if and only if 
\begin{equation}\label{L1E3++}
\left(\frac{\tau}{\gamma}\lambda+1\right)\left\{\lambda^2+2\lambda-3(1-\rho^2)^2\right\}+
\frac{\alpha\beta}{\gamma}\left\{\lambda+3\left(1-\left<(u^{\pm}_n)^2,1\right>\right)\right\}=0.
\end{equation}
This is a cubic equation for $\lambda$ and is equivalent to $g(\lambda,\tau)=0$.
Since we are assuming $\lambda\not\in\sigma_g$, we see that $h(\lambda)\neq 0$.
Because of Proposition~\ref{S2P5}~(i), we see that $L+A-\lambda$ has the inverse, and hence (\ref{L1E3}) can be solved for $\phi$, {\it i.e.,}
\begin{equation}\label{L1E4}
\phi=\left( L+A-\lambda\right)^{-1}\left[
\Phi-\frac{\alpha\tau\Xi}{\tau\lambda+\gamma}\right].
\end{equation}
Substituting (\ref{L1E4}) into (\ref{L1E2}), we have
\begin{equation}\label{L1E5}
\eta=\frac{\beta}{\tau\lambda+\gamma}
\left<\left( L+A-\lambda\right)^{-1}\left[
\Phi-\frac{\alpha\tau\Xi}{\tau\lambda+\gamma}
\right],1\right>
-\frac{\tau\Xi}{\tau\lambda+\gamma}.
\end{equation}
Thus, the pair (\ref{L1E4}) and (\ref{L1E5}) gives a solution for (\ref{L1E1}), and hence the inverse of $\calL-\lambda\Id$ exists.
Moreover, $\calL-\lambda\Id$ is injective and surjective.
It is obvious that the mapping from $(\phi,\eta)$ to $(\Phi,\Xi)$ is bounded.
It follows from the open mapping theorem that the inverse of $\calL-\lambda\Id$ is bounded.
Thus, $\lambda\not\in\sigma_{\calL}$ and (\ref{L1E0}) holds.
\end{proof}
Since $\sigma_L$ does not depend on $\tau$ and $-\gamma/\tau$ is always negative for $\tau>0$, it is important to study $\sigma_g$.

\begin{lemma}\label{L1+}
Let $n\ge 1$.
Assume that (\ref{T1E0}) or (\ref{T2E0}) holds.
Then the following (i)--(v) hold:\\
(i) The equation $g(\,\cdot\,,\tau)=0$ has a negative real root for all $\tau>0$, and hence $\sigma_g$ has a negative real number.\\
(ii) $\sigma_{\calL}$ has at most two complex eigenvalues.
Moreover, if they exist, then they are one pair of complex conjugate eigenvalues in $\sigma_g$.\\
(iii) $\sigma_{\calL}$ does not have a zero eigenvalue, {\it i.e.,} $0\not\in\sigma_{\calL}$.\\
(iv) If $\bigl<\psi^{(n)}_j,1\bigr>=0$, then $\mu^{(n)}_j\in\sigma_{\calL}$. In particular,
\[
\{\mu^{(n)}_1,\mu^{(n)}_3,\mu^{(n)}_5,\ldots\}\subset\sigma_{\calL}.
\]
(v) If $\bigl<\psi^{(n)}_j,1\bigr>\neq 0$, then $\mu^{(n)}_j\not\in\sigma_{\calL}$. In particular,
\[
\mu^{(n)}_0\not\in\sigma_{\calL}.
\]
\end{lemma}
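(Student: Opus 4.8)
The common engine for all five parts is Lemma~\ref{L1} together with the fact that $\sigma_L\subset\R$ (the operator $L$ is self-adjoint, being a Schr\"odinger-type operator with Neumann conditions) and $\sigma_0=\{-\gamma/\tau\}\subset\R$. Consequently every non-real element of $\sigma_{\calL}$ must already be a root of the real cubic $g$. The single computation that drives (i)--(iii) is the sign of the constant term $g(0,\tau)$. The plan is to evaluate it explicitly: inserting $\rho^2=2\kn^2/(1+\kn^2)$, so that $(1-\rho^2)^2=(1-\kn^2)^2/(1+\kn^2)^2$, and substituting Proposition~\ref{S2P3} for $\langle(u^{\pm}_n)^2,1\rangle$, I expect the constant term to collapse to
\[
g(0,\tau)=\frac{3}{1+\kn^2}\cdot\frac{2E(\kn)-(1-\kn^2)K(\kn)}{K(\kn)}\,(\delta-\chi_n),
\]
where the bracketed quotient is exactly the reciprocal factor in the definition (\ref{delta}) of $\chi_n$. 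By Lemma~\ref{AL1} the middle factor is positive, and the hypothesis (\ref{T1E0})/(\ref{T2E0}), i.e.\ $\chi_n<\delta$, then gives $g(0,\tau)>0$ for every $\tau>0$. This identity is the linchpin, and verifying it cleanly is the main obstacle; everything else is bookkeeping.

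Granting $g(0,\tau)>0$, part (i) is immediate: $g$ has positive leading coefficient $\tau/\gamma$, so $g(\lambda,\tau)\to-\infty$ as $\lambda\to-\infty$, and the intermediate value theorem produces a root in $(-\infty,0)$. For (iii) I would combine three facts: $0\notin\sigma_g$ since $g(0,\tau)>0$; $0\notin\sigma_0$ since $-\gamma/\tau<0$; and $0\notin\sigma_L$ because Proposition~\ref{S2P1}(i) places $0$ strictly in the gap $\mu^{(n)}_{n-1}>0>\mu^{(n)}_n$. By Lemma~\ref{L1} these give $0\notin\sigma_{\calL}$. For (ii), a real cubic with the real root from (i) has its remaining two roots either both real or a complex-conjugate pair, so $\sigma_g$ contains at most one such pair; since every non-real element of $\sigma_{\calL}$ lies in $\sigma_g$ and $\sigma_{\calL}$ is conjugation-symmetric (the coefficients are real), $\sigma_{\calL}$ has at most two complex eigenvalues, and if present they form a conjugate pair in $\sigma_g$.

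Parts (iv) and (v) are solvability statements for the full system (\ref{EVP}) at $\lambda=\mu^{(n)}_j$, and the test is whether the $L$-eigenfunction $\psi^{(n)}_j$ has nonzero mean. For (iv), when $\langle\psi^{(n)}_j,1\rangle=0$ I would simply check that $(\phi,\eta)=(\psi^{(n)}_j,0)$ solves (\ref{EVP}) with $\lambda=\mu^{(n)}_j$: the first equation reduces to $L\psi^{(n)}_j=\mu^{(n)}_j\psi^{(n)}_j$ and the second to $\frac{\beta}{\tau}\langle\psi^{(n)}_j,1\rangle=0$, both of which hold. The ``in particular'' then follows since for odd $j$ the eigenfunction is anti-symmetric by Proposition~\ref{S2P1}(ii), forcing $\int_0^1\psi^{(n)}_j\,dx=0$.

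For (v), suppose toward a contradiction that $(\phi,\eta)\neq(0,0)$ solves (\ref{EVP}) at $\mu^{(n)}_j$ when $\langle\psi^{(n)}_j,1\rangle\neq0$. The first equation reads $(L-\mu^{(n)}_j)\phi=\alpha\eta$ (a constant function); pairing with $\psi^{(n)}_j$ and using self-adjointness of $L$ with reality of $\mu^{(n)}_j$ annihilates the left side, leaving $\alpha\eta\,\overline{\langle\psi^{(n)}_j,1\rangle}=0$, whence $\eta=0$. Then $\phi\in\ker(L-\mu^{(n)}_j)=\mathrm{span}\{\psi^{(n)}_j\}$ by simplicity, so $\phi=c\psi^{(n)}_j$, and the second equation gives $\langle\phi,1\rangle=0$, i.e.\ $c\langle\psi^{(n)}_j,1\rangle=0$; since $\langle\psi^{(n)}_j,1\rangle\neq0$ this forces $c=0$ and hence $(\phi,\eta)=(0,0)$, a contradiction. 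Finally, the principal eigenfunction $\psi^{(n)}_0$ may be taken one-signed, so it has nonzero mean and (v) yields $\mu^{(n)}_0\notin\sigma_{\calL}$.
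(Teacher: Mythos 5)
Your proposal is correct, and for parts (i)--(iv) it follows essentially the same route as the paper: the sign computation of $g(0,\tau)$ (your factored identity $g(0,\tau)=\frac{3}{1+\kn^2}\cdot\frac{2E(\kn)-(1-\kn^2)K(\kn)}{K(\kn)}(\delta-\chi_n)$ is exactly right, and is in fact a cleaner statement than the paper's display (\ref{L1+E0}), which is written only up to the positive factor $(1+\kn^2)^2K(\kn)$), then the intermediate value theorem for (i), the real-cubic/conjugate-pair observation plus Lemma~\ref{L1} for (ii), the three exclusions $0\notin\sigma_L\cup\sigma_g\cup\sigma_0$ for (iii), and the explicit eigenvector $(\psi^{(n)}_j,0)$ with the anti-symmetry of odd-index eigenfunctions for (iv). The one genuine difference is in (v): you prove only that $\calL-\mu^{(n)}_j\Id$ is injective (no eigenvector exists), via self-adjointness of $L$, reality of $\mu^{(n)}_j$, and simplicity; this yields $\mu^{(n)}_j\notin\sigma_{\calL}$ only because the spectrum of $\calL$ consists of eigenvalues alone, a fact the paper states as standing knowledge (and which follows from compactness of the resolvent, cf.\ Proposition~\ref{S5P1}) but which you should cite explicitly to close the step. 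The paper instead proves invertibility directly: it solves the resolvent equation (\ref{L1E1}) for arbitrary $(\Phi,\Xi)\in Y$ by decomposing $\phi=c\psi^{(n)}_j+\psi^{\perp}$, using the Fredholm solvability condition $\bigl<\alpha\eta+\Phi,\psi^{(n)}_j\bigr>=0$ to pin down $\eta$, then $\psi^{\perp}$, then $c$, producing explicit formulas (\ref{L1+E2})--(\ref{L1+E5}) for the inverse. Your route is shorter and conceptually cleaner; the paper's is self-contained (independent of the discreteness assertion) and hands you an explicit resolvent at $\mu^{(n)}_j$, which is in the spirit of the paper's program of making everything exactly computable.
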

\begin{proof} 
(i) Let $g(\lambda,\tau)$ be defined by (\ref{g}).
By Proposition~\ref{S2P3} and (\ref{T1E0}) (or (\ref{T2E0})) we have
\begin{multline}\label{L1+E0}
g(0,\tau)=
\frac{3\alpha\beta}{\gamma}\left(1-\left<(u^{\pm}_n)^2,1\right>\right)-3(1-\rho^2)^2\\
=\frac{3\alpha\beta}{\gamma}(1+\kn^2)\left\{2E(\kn)-(1-\kn^2)K(\kn)\right\}-3(1-\kn^2)^2K(\kn)>0.
\end{multline}
Note that the expression (\ref{delta}) comes from (\ref{L1+E0}).
Since $g(-\infty,\tau)=-\infty$, by the intermediate value theorem we see that $g(\lambda,\tau)=0$ has a negative real root.\\
(ii) Since $\sigma_L\cup\sigma_0\subset\R$, it follows from Lemma~\ref{L1} that the complex eigenvalues of $\calL$ come only from $\sigma_g$ if they exist.
The set $\sigma_g$ consists of the roots of the cubic equation $g(\lambda,\tau)=0$ with real coefficients.
Thus, the conclusion (ii) holds.\\
(iii) It follows from Proposition~\ref{S2P1}~(i) that $0\not\in\sigma_L$.
By (\ref{L1+E0}) we see that $g(0,\tau)>0$ and hence $0\not\in\sigma_g$.
By (\ref{L1E0}) we see that $0\not\in\sigma_{\calL}$.\\
(iv) We consider (\ref{EVP}) with $\lambdaC=\mu^{(n)}_j$.
Let $\phiC=\psi^{(n)}_j$ and $\etaC=0$.
Since $\bigl<\psi^{(n)}_j,1\bigr>=0$, we can easily check that $(\phiC,\etaC)$ is a nontrivial solution of (\ref{EVP}), and hence $\mu^{(n)}_j\in\sigma_{\calL}$.
Since $\psi^{(n)}_j$, $j=1,3,5,\ldots$, is anti-symmetric (Proposition~\ref{S2P1}~(ii)), we see $\bigl<\psi^{(n)}_j,1\bigr>=0$ for $j=1,3,5,\ldots$.
Therefore, $\{\mu^{(n)}_1,\mu^{(n)}_3,\mu^{(n)}_5,\ldots\}\subset\sigma_{\calL}$.\\
(v) Let $\phi:=\phiC$, $\eta:=\etaC$, $\Phi:=\PhiC$ and $\Xi:=\XiC$.
When $\lambdaC=\mu^{(n)}_j$, we show that (\ref{L1E1}) has a unique solution $(\phi,\eta)\in X$ for every $(\Phi,\Xi)\in Y$.
Let $\phi=c\psi^{(n)}_j+\psi^{\perp}$, $c\in\C$ and $\psi^{\perp}\in{\rm span}\{\psi^{(n)}_j\}^{\perp}$.
By the first equation of (\ref{L1E1}) we have
\begin{equation}\label{L1+E1}
(L-\mu^{(n)}_j)\psi^{\perp}=\alpha\eta+\Phi.
\end{equation}
Since $\mu^{(n)}_j$ is a simple eigenvalue of $L$, (\ref{L1+E1}) has a unique solution if and only if $\bigl<\alpha\eta+\Phi,\psi^{(n)}_j\bigr>=0$.
In this case we have
\begin{equation}\label{L1+E2}
\eta=-\frac{\bigl<\Phi,\psi^{(n)}_j\bigr>}{\alpha\bigl<\psi^{(n)}_j,1\bigr>},
\end{equation}
where we use $\bigl<1,\psi^{(n)}_j\bigr>=\bigl<\psi^{(n)}_j,1\bigr>\neq 0$.
By (\ref{L1+E1}) we have
\begin{equation}\label{L1+E3}
\psi^{\perp}=(L-\mu^{(n)}_j)^{-1}\left[\Phi-\frac{\bigl<\Phi,\psi^{(n)}_j\bigr>}{\bigl<\psi^{(n)}_j,1\bigr>}\right].
\end{equation}
By the second equation of (\ref{L1E1}) we have
\[
c=-\frac{\bigl<\psi^{\perp},1\bigr>}{\bigl<\psi^{(n)}_j,1\bigr>}+\frac{(\tau\mu^{(n)}_j+\gamma)\eta+\tau\Xi}{\beta\bigl<\psi^{(n)}_j,1\bigr>}.
\]
Since $\phi=c\psi^{(n)}_j+\psi^{\perp}$, we have
\begin{equation}\label{L1+E5}
\phi=\psi^{\perp}-\frac{\bigl<\psi^{\perp},1\bigr>}{\bigl<\psi^{(n)}_j,1\bigr>}\psi^{(n)}_j+
\frac{(\tau\mu^{(n)}_j+\gamma)\eta+\tau\Xi}{\beta\bigl<\psi^{(n)}_j,1\bigr>}\psi^{(n)}_j,
\end{equation}
where $\psi^{\perp}$ and $\eta$ are given by (\ref{L1+E3}) and (\ref{L1+E2}), respectively.
Thus, the pair (\ref{L1+E5}) and (\ref{L1+E2}) gives a unique solution for (\ref{L1E1}), and hence $\mu^{(n)}_j\not\in\sigma_{\calL}$.
In particular, $\psi^{(n)}_0$ does not change sign, and hence $\bigl<\psi^{(n)}_0,1\bigr>\neq 0$.
Thus, $\mu^{(n)}_0\not\in\sigma_{\calL}$.
\end{proof}

\begin{lemma}\label{L2}
Let $n\ge 1$.
Assume that (\ref{T1E0}) holds if $n=1$ and that (\ref{T2E0}) holds if $n\ge 2$.
Then the following (i)--(iii) hold:\\
(i) Let $\lIn$ be given by (\ref{T1E1}).
Then, there exists a unique $\tau_n>0$ which is defined by (\ref{T1E-1}) such that if $\tau=\tau_n$, then $\sigma_{\calL}$ has one pair of complex conjugate eigenvalues on the imaginary axis which are denoted by $\pm i\lIn$, $\lIn>0$, and that the other eigenvalues of $\sigma_{\calL}$ are real.\\
(ii) If $|\tau-\tau_n|$ is small, then $\calL$ has exactly one pair of complex eigenvalues $\lR(\tau)\pm i\lI(\tau)$ such that $\lR(\tau_n)=0$, $\lI(\tau_n)=\lambda_{{\rm I},n}$ and both $\lR(\tau)$ and $\lI(\tau)$ are $C^1$-functions of $\tau$.\\
(iii) $\frac{d\lR}{d\tau}(\tau_n)>0$.
\end{lemma}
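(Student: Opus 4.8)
The plan is to push everything onto the cubic $g(\,\cdot\,,\tau)$ of \rf{g}: by Lemmas~\ref{L1} and \ref{L1+} the non-real eigenvalues of $\calL$ are precisely the non-real roots of $g(\,\cdot\,,\tau)=0$. Write
\[
g(\lambda,\tau)=a\lambda^3+b\lambda^2+c\lambda+d,\quad a=\frac{\tau}{\gamma},\ \ b=\frac{2\tau}{\gamma}+1,\ \ c=\delta+2-\frac{3\tau}{\gamma}(1-\rho^2)^2,\ \ d=g(0,\tau),
\]
with $\delta=\alpha\beta/\gamma$; note that $d$ does not depend on $\tau$ and that $b-2a=1$, an identity that will drive (iii). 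For (i), a cubic with real coefficients has a conjugate pair of purely imaginary roots $\pm i\omega$, $\omega>0$, iff $\lambda=i\omega$ is a root; separating real and imaginary parts gives $d=b\omega^2$ and $c=a\omega^2$, so that $\omega^2=d/b=c/a$ with the compatibility relation $ad=bc$. Since $a,b,c$ are affine in $\tau$ and $d$ is constant, $ad=bc$ is a quadratic equation in $\tau/\gamma$ whose constant term $-(\delta+2)$ is negative and whose leading coefficient $6(1-\rho^2)^2$ is positive, so it has a unique positive root $\tau_n$; rationalizing it reproduces \rf{T1E-1}. The frequency is $\omega^2=d/b>0$, where positivity comes from $d=g(0,\tau_n)>0$, established in \rf{L1+E0} under the hypothesis $\chi_n<\delta$ (i.e. \rf{T1E0} or \rf{T2E0}), together with $b>0$; substituting $\tau_n$ and using Proposition~\ref{S2P3} and $1-\rho^2=(1-\kn^2)/(1+\kn^2)$ turns $\omega=\sqrt{d/b}$ into the explicit $\lIn$ of \rf{T1E1}. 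Being non-real, $\pm i\lIn$ lie outside $\sigma_L\cup\sigma_0$, hence are genuine eigenvalues of $\calL$ by Proposition~\ref{S2P5}(ii); the third root of $g(\,\cdot\,,\tau_n)$ is real, and since $\sigma_L\cup\sigma_0\subset\R$, every eigenvalue of $\calL$ other than $\pm i\lIn$ is real.

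For (ii), the three roots of $g(\,\cdot\,,\tau_n)$ are $\pm i\lIn$ and one real number, hence pairwise distinct, so $i\lIn$ is a simple root; explicitly $\partial_\lambda g(i\lIn,\tau_n)=-2a\lIn^2+2ib\lIn\neq0$. As the coefficients of $g$ are affine, hence analytic, in $\tau$, the implicit function theorem yields a $C^1$ branch $\lambda(\tau)=\lR(\tau)+i\lI(\tau)$ with $\lambda(\tau_n)=i\lIn$, along with its conjugate. By continuity $\lambda(\tau)$ remains non-real for $\tau$ near $\tau_n$, so it stays an eigenvalue of $\calL$ by Proposition~\ref{S2P5}(ii); and Lemma~\ref{L1+}(ii) allows at most one conjugate pair. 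Thus for $\tau$ near $\tau_n$ these are exactly the complex eigenvalues of $\calL$, with $\lR(\tau_n)=0$ and $\lI(\tau_n)=\lIn$.

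For (iii), differentiating $g(\lambda(\tau),\tau)\equiv0$ gives $\lambda'(\tau)=-\partial_\tau g/\partial_\lambda g$. At $\tau=\tau_n$, $\lambda=i\lIn$, I have $\partial_\tau g=\frac{i\lIn}{\gamma}\bigl(-\lIn^2+2i\lIn-3(1-\rho^2)^2\bigr)$ and, using $\lIn^2=c/a$ to collapse the quadratic term, $\partial_\lambda g=-2a\lIn^2+2ib\lIn$. Then $\frac{d\lR}{d\tau}(\tau_n)={\rm Re}\,\lambda'(\tau_n)=-{\rm Re}(\partial_\tau g\,\overline{\partial_\lambda g})/|\partial_\lambda g|^2$, and a short expansion shows this equals a positive multiple of
\[
b\bigl(\lIn^2+3(1-\rho^2)^2\bigr)-2a\lIn^2=\lIn^2(b-2a)+3b(1-\rho^2)^2=\lIn^2+3b(1-\rho^2)^2,
\]
where the identity $b-2a=1$ is used in the last step. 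Since $\lIn^2>0$, $b>0$ and $(1-\rho^2)^2>0$, this is strictly positive, whence $\frac{d\lR}{d\tau}(\tau_n)>0$.

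The transversality in (iii) looks delicate but in fact collapses cleanly once one exploits $\lIn^2=c/a$ and $b-2a=1$, so it is not the real difficulty. The main obstacle is the tedious algebraic bookkeeping in (i): matching the unique positive root of the quadratic and the frequency $\sqrt{d/b}$ with the closed forms \rf{T1E-1} and \rf{T1E1} through Proposition~\ref{S2P3} and the relation $(1-\rho^2)^2=\bigl((1-\kn^2)/(1+\kn^2)\bigr)^2$. The one conceptual point to keep in view throughout is that the hypothesis $\chi_n<\delta$ is exactly what secures $d=g(0,\tau_n)>0$, and hence $\omega^2>0$.
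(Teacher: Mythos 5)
Your proof is correct, and it follows the paper's skeleton (reduce the complex spectrum to the cubic $g(\,\cdot\,,\tau)$ via Lemmas~\ref{L1} and \ref{L1+}, split $g(i\omega,\tau)=0$ into real and imaginary parts, then apply the implicit function theorem and implicit differentiation), but the algebraic organization of (i) and (iii) is genuinely different and in places cleaner. In (i) the paper eliminates $\tau$ first, solving the imaginary-part equation for $\tau$ as in \rf{L2E2} and landing on the quadratic \rf{L2E3} in $Z=\lI^2+3(1-\rho^2)^2$; it must then verify $Z_+>3(1-\rho^2)^2$ via a discriminant comparison, which is where \rf{T1E0}/\rf{T2E0} enters. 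You eliminate $\omega$ first, so existence of a purely imaginary pair becomes the compatibility $ad=bc$, a quadratic in $s=\tau/\gamma$ with positive leading coefficient $6(1-\rho^2)^2$ and negative constant term $-(\delta+2)$: uniqueness of $\tau_n>0$ is then immediate from the sign pattern, and positivity of $\omega^2=d/b$ follows at once from $d=g(0,\tau)>0$ (that is, \rf{L1+E0}, where $\chi_n<\delta$ does its work) together with $b>0$ — a tidier division of labor. I checked the asserted algebra: with $P:=2(\delta+2)-3\delta\left(1-\left<(u^{\pm}_n)^2,1\right>\right)$ your quadratic is $6(1-\rho^2)^2s^2-Ps-(\delta+2)=0$, and rationalizing its positive root $\bigl(P+\sqrt{D}\bigr)/\bigl(12(1-\rho^2)^2\bigr)$ gives $2(\delta+2)/\bigl(\sqrt{D}-P\bigr)$, i.e.\ exactly \rf{T1E-1}; since your two equations $d=b\omega^2$, $c=a\omega^2$ are the same system as \rf{L2E1}, the closed form \rf{T1E1} for $\lIn$ is likewise guaranteed, though you leave that substitution implicit. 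In (iii), exploiting $c=a\lIn^2$ and the structural identity $b-2a=1$ collapses the transversality to the manifestly positive quantity $\lIn^2+3b(1-\rho^2)^2$; I verified ${\rm Re}\bigl(\partial_\tau g\,\overline{\partial_\lambda g}\bigr)=\frac{2\lIn^2}{\gamma}\bigl(2a\lIn^2-bQ\bigr)$ with $Q=\lIn^2+3(1-\rho^2)^2$, so your sign computation is right. Note in passing that your $\partial_\lambda g(i\lIn,\tau_n)=-2a\lIn^2+2ib\lIn$ is the correct value: the paper's displayed real part $-2(\delta+2)$ drops a term (the true real part is $-2a\lIn^2=-2(\delta+2)\lIn^2/Q$), a harmless slip there since only non-vanishing and signs are used, but your version is the one to trust. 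Finally, you are more explicit than the paper on a point it leaves implicit: that non-real roots of $g$ actually \emph{belong} to $\sigma_{\calL}$, via Proposition~\ref{S2P5}~(ii) with $\eta$ recovered from \rf{L1E2}, which is what upgrades the inclusion \rf{L1E0} to membership of $\pm i\lIn$ and of the perturbed pair in (ii).
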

\begin{proof}
(i) It is enough to look for a pure imaginary root $i\lI$, $\lI>0$, of the equation $g(i\lI,\tau)=0$, because $-i\lI$ becomes the other complex root.
Let $\delta:=\alpha\beta/\gamma$.
We have
\begin{multline*}
g(i\lI,\tau)=
-i\frac{\tau}{\gamma}\lI^3-\left(2\frac{\tau}{\gamma}+1\right)\lI^2+i\left\{\delta+2-3\frac{\tau}{\gamma}(1-\rho^2)^2\right\}\lI
\\
+3\delta\left(1-\left<(u^{\pm}_n)^2,1\right>\right)-3(1-\rho^2)^2=0.
\end{multline*}
Taking real and imaginary parts, we have
\begin{equation}\label{L2E1}
\begin{cases}
\disp -\left(2\frac{\tau}{\gamma}+1\right)\lI^2+3\delta\left(1-\left<(u^{\pm}_n)^2,1\right>\right)-3(1-\rho^2)^2=0,\\
\disp -\frac{\tau}{\gamma}\lI^3+\left\{\delta+2-3\frac{\tau}{\gamma}(1-\rho^2)^2\right\}\lI=0.
\end{cases}
\end{equation}
Since $\lI\neq 0$, by the second equation of (\ref{L2E1}) we have
\begin{equation}\label{L2E2}
\tau=\frac{(\delta+2)\gamma}{\lI^2+3(1-\rho^2)^2}.
\end{equation}
Substituting (\ref{L2E2}) into the first equation of (\ref{L2E1}), we have
\begin{equation}\label{L2E3}
Z^2+\left\{2(\delta+2)-3\delta\left(1-\left<(u^{\pm}_n)^2,1\right>\right)\right\}Z-6(\delta+2)(1-\rho^2)^2=0,
\end{equation}
where
\begin{equation}\label{L2E4}
Z:=\lI^2+3(1-\rho^2)^2.
\end{equation}
We can easily see that the equation (\ref{L2E3}) for $Z$ has a positive root $Z_+$ and a negative root $Z_-$.
Because of (\ref{L2E4}), $Z_-$ does not give a real root $\lI$.
We consider the case $Z=Z_+$.
By (\ref{L2E3}) we have
\[
Z_+=\frac{1}{2}\left[
-\left\{
2(\delta+2)-3\delta\left(1-\left<(u^{\pm}_n)^2,1\right>\right)
\right\}+\sqrt{D}
\right],
\]
where
\begin{equation}\label{D}
D:=\left\{2(\delta+2)-3\delta\left(1-\left<(u^{\pm}_n)^2,1\right>\right)\right\}^2+24(\delta+2)(1-\rho^2)^2.
\end{equation}
By (\ref{L2E4}) we have
\begin{equation}\label{L2E5}
\lI^2=-3(1-\rho^2)^2+\frac{1}{2}
\left[-\left\{2(\delta+2)-3\delta\left(1-\left<(u^{\pm}_n)^2,1\right>\right)\right\}+\sqrt{D}\right].
\end{equation}
Using (\ref{T1E0}) (or (\ref{T2E0})), we have
\begin{align*}
(\sqrt{D})^2-&\left\{2(\delta+2)-3\delta\left(1-\left<(u^{\pm}_n)^2,1\right>\right)+6(1-\rho^2)^2\right\}^2\\
&=36(1-\rho^2)^2\left\{\delta\left(1-\left<(u^{\pm}_n)^2,1\right>\right)-(1-\rho^2)^2\right\}\\
&=36(1-\rho^2)^2\left[
\delta(1+\kn^2)\left\{2E(\kn)-(1-\kn^2)K(\kn)\right\}-(1-\kn^2)^2K(\kn)\right]>0.
\end{align*}
The RHS of (\ref{L2E5}) is positive.
Hence, (\ref{L2E5}) gives a positive root $\lambda_{{\rm I},n}$ which is a unique positive root of the equation (\ref{L2E3}) for $\lI$.
By (\ref{L2E5}) we have
\begin{align}
\lambda_{{\rm I},n} &=\frac{1}{\sqrt{2}}\left[
\sqrt{D}-\left\{
2(\delta+2)-3\delta\left(1-\left<(u^{\pm}_n)^2,1\right>\right)+6(1-\rho^2)^2\right\}\right]^{1/2}\label{L2E6}\\
&=\frac{1}{\sqrt{2}}\left[
\sqrt{D}-\left\{
2(\delta+2)-\frac{3\delta}{1+\kn^2}\left(2\frac{E(\kn)}{K(\kn)}-1+\kn^2\right)+6\left(\frac{1-\kn^2}{1+\kn^2}\right)^2\right\}\right]^{1/2},\nonumber
\end{align}
where
\[
D=\left\{
2(\delta+2)-\frac{3\delta}{1+\kn^2}\left(2\frac{E(\kn)}{K(\kn)}-1+\kn^2\right)
\right\}^2
+24(\delta+2)\left(\frac{1-\kn^2}{1+\kn^2}\right)^2.
\]
We obtain (\ref{T1E1}).
Substituting (\ref{L2E5}) into (\ref{L2E2}), we obtain $\tau_n>0$ such that $i\lI$ is a root of $g(\,\cdot\,,\tau_n)=0$.
Specifically, we have
\begin{align*}
\tau_n:
&=\frac{2(\delta+2)\gamma}{\sqrt{D}-\left\{2(\delta+2)-3\delta\left(1-\left<(u^{\pm}_n)^2,1\right>\right)\right\}}\\
&=\frac{2(\delta+2)\gamma}{\sqrt{D}-2(\delta+2)+\frac{3\delta}{1+\kn^2}\left(2\frac{E(\kn)}{K(\kn)}-1+\kn^2\right)}.
\end{align*}
We obtain (\ref{T1E-1}).
In summary, there exists a unique $\tau_n>0$ such that $\sigma_{\calL}$ has pure imaginary eigenvalues $\pm i\lambda_{{\rm I},n}$ and that the other eigenvalues are real (Lemma~\ref{L1}~(ii)).
The proof of (i) is complete.\\
(ii) In the proof we consider $\tau$ as a complex variable and consider $g(\lambda,\tau)$ as a complex analytic function of two complex variables $\lambda$ and $\tau$.
We have
\[
\frac{\p g}{\p\lambda}(i\lIn,\tau_n)
=-2(\delta+2)+i2\left(2\frac{\tau_n}{\gamma}+1\right)\lIn\neq 0.
\]
Since $g(\lambda,\tau)$ is a complex analytic function, it follows from the implicit function theorem that there exists a complex analytic function $\lambda=\lambda(\tau)$, which is also denoted by $\lR(\tau)+i\lI(\tau)$, defined in a neighborhood of $\tau_n$ such that $\lambda(\tau_n)=i\lIn$ and that $g(\lambda(\tau),\tau)=0$ for $\tau$ close to $\tau_n$.
Therefore, if $\tau$ is close to $\tau_n$, then $\sigma_{\calL}$ has exactly one pair of complex conjugate eigenvalues which are close to $\pm i\lIn$.
Thus, the proof of (ii) is complete.\\
(iii) By direct calculation we have
\[
\frac{\p\lambda}{\p\tau}(\tau_n)
=-\frac{\frac{\p g}{\p\tau}(i\lIn,\tau_n)}{\frac{\p g}{\p\lambda}(i\lIn,\tau_n)}
=\frac{\zeta_{\rm R}+i\zeta_{\rm I}}{\Delta},
\]
where
\begin{align*}
\Delta&:=4(\delta+2)^2\gamma+\lIn^2\left(4\frac{\tau_n}{\gamma}+2\right)^2\gamma>0,\\
\zeta_{\rm R}&:=2\lIn^2\left\{\lIn^2+3(1-\rho^2)^2\right\}>0,\\
\zeta_{\rm I}&:=-2(\delta+2)\lIn\left\{\lIn^2+3(1-\rho^2)^2\right\}-2\lIn^3\left(4\frac{\tau_n}{\gamma}+2\right)<0.
\end{align*}
Thus,
\[
\frac{d\lR}{d\tau}(\tau_n)={\rm Re}\frac{\p\lambda}{\p\tau}(\tau_n)=\frac{\zeta_{\rm R}}{\Delta}>0.
\]
and
\begin{equation}\label{L2E7}
\frac{d\lI}{d\tau}(\tau_n)={\rm Im}\frac{\p\lambda}{\p\tau}(\tau_n)=\frac{\zeta_{\rm I}}{\Delta}<0.
\end{equation}
The proof of (iii) is complete.
\end{proof}

\begin{lemma}\label{L2+}
Let $n\ge 1$.
Assume that (\ref{T1E0}) holds if $n=1$ and that (\ref{T2E0}) holds if $n\ge 2$.
The two complex eigenvalues $\lR(\tau)\pm i\lI(\tau)$ obtained in Lemma~\ref{L2} are simple if $|\tau-\tau_n|$ is small.
\end{lemma}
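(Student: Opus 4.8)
The plan is to show that the eigenvalue $\lambda_0:=\lR(\tau)+i\lI(\tau)$ has algebraic multiplicity one. From the proof of Lemma~\ref{L1}, $\lambda_0$ is a root of $h$ (equivalently of $g(\cdot,\tau)$), and by Proposition~\ref{S2P5}~(ii) the eigenfunction of $L+A-\lambda_0$ is unique up to scalars, namely $\phi_0:=(L-\lambda_0)^{-1}[1]$ given explicitly by (\ref{S2P2E0}); the associated eigenvector of $\calL$ is $(\phi_0,\eta_0)$ with $\eta_0=\beta\langle\phi_0,1\rangle/(\tau\lambda_0+\gamma)$. Since $\lambda_0\notin\sigma_0$, the map $\phi_0\mapsto(\phi_0,\eta_0)$ is a bijection of the kernels, so the geometric multiplicity of $\lambda_0$ for $\calL$ is one. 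Hence it suffices to rule out a Jordan chain of length two, i.e. to show that $(\calL-\lambda_0)(\phi_1,\eta_1)=(\phi_0,\eta_0)$ has no solution $(\phi_1,\eta_1)\in X$.

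First I would eliminate $\eta_1$ exactly as in the proof of Lemma~\ref{L1}: the second component gives $\eta_1=(\beta\langle\phi_1,1\rangle-\tau\eta_0)/(\tau\lambda_0+\gamma)$, and substitution into the first component yields
\[
(L+A-\lambda_0)\phi_1=\phi_0-\frac{\alpha\tau\eta_0}{\tau\lambda_0+\gamma},
\]
where $A$ is the rank-one operator of Lemma~\ref{L1}. Since $L$ is self-adjoint with compact resolvent and $A$ is finite rank, $L+A-\lambda_0$ is Fredholm of index zero, and its cokernel is spanned by the adjoint eigenfunction $\psi_0:=(L-\bar\lambda_0)^{-1}[1]=\overline{\phi_0}$. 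Therefore the chain equation is solvable if and only if the solvability functional
\[
S:=\left\langle\phi_0-\frac{\alpha\tau\eta_0}{\tau\lambda_0+\gamma},\,\psi_0\right\rangle
\]
vanishes, and the claim reduces to proving $S\neq0$.

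The key step is to evaluate $S$. Writing $m(\lambda):=\langle(L-\lambda)^{-1}[1],1\rangle$ and using $\langle(L-\lambda_0)^{-2}[1],1\rangle=\langle\phi_0,\psi_0\rangle=\int_0^1\phi_0^2\,dx$, the identities $\langle\phi_0,1\rangle=\langle 1,\psi_0\rangle=m(\lambda_0)$, and the eigenvalue relation $h(\lambda_0)=0$ in the form $m(\lambda_0)=(\tau\lambda_0+\gamma)/(\alpha\beta)$, a direct computation collapses $S$ to
\[
S=\int_0^1\phi_0^2\,dx-\frac{\tau}{\alpha\beta}=-\frac{\tau\lambda_0+\gamma}{\alpha\beta}\,h'(\lambda_0),
\]
where the last equality follows by differentiating $h(\lambda)=1-\frac{\alpha\beta}{\tau\lambda+\gamma}m(\lambda)$ and using $h(\lambda_0)=0$ once more. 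Thus $S\neq0$ is equivalent to $h'(\lambda_0)\neq0$, i.e. to $\lambda_0$ being a simple zero of $h$. Finally, differentiating the identity $\gamma\,g(\lambda,\tau)=(\tau\lambda+\gamma)\{\lambda^2+2\lambda-3(1-\rho^2)^2\}\,h(\lambda)$ (which is (\ref{L1E3++}) cleared of denominators) at $\lambda_0$, where the prefactor is nonzero because $\lambda_0\notin\sigma_L\cup\sigma_0$, shows $h'(\lambda_0)=0\iff \frac{\p g}{\p\lambda}(\lambda_0,\tau)=0$. But $\frac{\p g}{\p\lambda}(i\lIn,\tau_n)=-2(\delta+2)+2i(2\tau_n/\gamma+1)\lIn$ was already recorded in the proof of Lemma~\ref{L2}~(ii), and its real part $-2(\delta+2)$ is nonzero; by continuity $\frac{\p g}{\p\lambda}(\lambda_0(\tau),\tau)\neq0$ for $\tau$ near $\tau_n$. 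Hence $h'(\lambda_0)\neq0$, so $S\neq0$, no length-two Jordan chain exists, and $\lambda_0$ is simple.

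I expect the main obstacle to be the bookkeeping in the evaluation of $S$: correctly identifying the cokernel direction $\psi_0=\overline{\phi_0}$ (the conjugate-linearity of $\langle\cdot,\cdot\rangle$ in the second slot and the $\lambda$-dependence of $A$ inside $h$ must be tracked carefully) and verifying the clean factor $-\frac{\tau\lambda_0+\gamma}{\alpha\beta}h'(\lambda_0)$. A softer alternative avoiding the Fredholm computation is to observe that, by (\ref{L1E4})--(\ref{L1E5}) together with (\ref{S2P5E2}), the resolvent $(\calL-\lambda)^{-1}$ is holomorphic near $\lambda_0$ apart from the scalar factor $1/h(\lambda)$; a simple zero of $h$ then forces a simple pole of the resolvent, hence semisimplicity of $\lambda_0$, which together with geometric multiplicity one again yields algebraic multiplicity one.
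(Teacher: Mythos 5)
Your proposal is correct, and its skeleton matches the paper's proof up to the decisive step: both establish geometric multiplicity one from the explicit formula $\phi_0=(L-\lambda_0)^{-1}[1]$ of Propositions~\ref{S2P5} and \ref{S2P2}, and both reduce algebraic simplicity to the nonvanishing of the \emph{same} scalar quantity $\int_0^1\phi_0^2\,dx-\tau/(\alpha\beta)$ --- your solvability functional $S$ is exactly the paper's pairing $J(\tau)$ of the eigenvector of $\calL$ with the adjoint eigenvector of $\calL^*$, the term $\eta\overline{\eta^*}$ there playing the role of your correction $-\alpha\tau\eta_0/(\tau\lambda_0+\gamma)$ in the reduced right-hand side (the paper works with the full $2\times2$ adjoint system, you with the Fredholm cokernel of the scalar operator $L+A-\lambda_0$; these are equivalent). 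Where you genuinely diverge is in proving the nonvanishing. The paper does it by direct computation at $\tau=\tau_n$: it reduces to (\ref{L2+E8}), checks that the imaginary part equals $6\lIn\bigl(1-\langle(u^{\pm}_n)^2,1\rangle\bigr)+4\lIn(\delta+2)\gamma/(\alpha\beta)>0$, and then extends to nearby $\tau$ by continuity of $J$. You instead prove the identity $S=-\frac{\tau\lambda_0+\gamma}{\alpha\beta}h'(\lambda_0)$ --- which I verified: $\int_0^1\phi_0^2\,dx=m'(\lambda_0)$ because $(L-\lambda)^{-1}$ is symmetric for the bilinear form $\int uv\,dx$ ($L$ has real coefficients), and $h(\lambda_0)=0$ collapses the remaining terms --- and then use the factorization $\gamma\,g(\lambda,\tau)=(\tau\lambda+\gamma)\{\lambda^2+2\lambda-3(1-\rho^2)^2\}h(\lambda)$, whose prefactor is nonzero since $\lambda_0$ is nonreal, to convert $h'(\lambda_0)\neq0$ into $\frac{\p g}{\p\lambda}(\lambda_0,\tau)\neq0$, a nondegeneracy already paid for in the implicit-function-theorem step of Lemma~\ref{L2}~(ii). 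This buys a conceptual point that the paper's brute-force check obscures: algebraic simplicity of the eigenvalue of $\calL$ is precisely simplicity of $\lambda_0$ as a root of the cubic $g(\cdot,\tau)$, and your resolvent-pole alternative (only the factor $1/h(\lambda)$ can be singular, so a simple zero of $h$ forces a semisimple eigenvalue) reaches the same conclusion with no Fredholm bookkeeping at all; the paper's computation, in exchange, is self-contained and yields an explicit positive lower bound at $\tau_n$. One caveat: the value $\frac{\p g}{\p\lambda}(i\lIn,\tau_n)=-2(\delta+2)+2i\bigl(2\tau_n/\gamma+1\bigr)\lIn$ that you quote from the paper is slightly off --- recomputing with the second equation of (\ref{L2E1}) gives real part $-2\tau_n\lIn^2/\gamma=-2(\delta+2)\lIn^2/\{\lIn^2+3(1-\rho^2)^2\}$, i.e.\ the paper's display drops a term $6(\tau_n/\gamma)(1-\rho^2)^2$ --- but this real part is still strictly negative, and in any case the imaginary part $2(2\tau_n/\gamma+1)\lIn>0$ alone gives the nonvanishing you need, so your argument stands; just do not lean on the literal value $-2(\delta+2)$.
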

\begin{proof}
It follows from Lemma~\ref{L2}~(i) and (ii) that $\lR(\tau)\pm i\lI(\tau)$ are continuous in $\tau$ and $\lR(\tau_n)+i\lI(\tau_n)=i\lIn$.
Hereafter, we show that $\lambda(\tau):=\lR(\tau)+i\lI(\tau)$ is simple, since a proof for the case $\lR(\tau)-i\lI(\tau)$ is similar.

Let $(\phi(\tau),\eta(\tau))$ be an eigenvector of (\ref{EVP}) associated with $\lambda(\tau)$.
Solving the second equation of (\ref{EVP}) with respect to $\eta$, we have
\begin{equation}\label{L2+EE1}
\eta(\tau)=\frac{\beta\left<\phi(\tau),1\right>}{\tau\lambda(\tau)+\gamma},
\end{equation}
where $\tau\lambda(\tau)+\gamma\neq 0$.
Substituting (\ref{L2+EE1}) into the first equation of (\ref{EVP}), we have
\[
(L-\lambda(\tau))\phi-\frac{\alpha\beta\left<\phi(\tau),1\right>}{\tau\lambda(\tau)+\gamma}=0.
\]
Since $\lambda(\tau)\in \sigma_{\calL}\setminus\sigma_L$, by Proposition~\ref{S2P5} we have $h(\lambda)=0$ and
\begin{equation}\label{L2+EE2}
\phi(\tau)=(L-\lambda(\tau))^{-1}[1]
=\frac{-(3+\lambda(\tau))+3(u^{\pm}_n)^2}{\lambda(\tau)^2+2\lambda(\tau)-3(1-\rho^2)^2},
\end{equation}
where Proposition~\ref{S2P2} was used.
By the expressions (\ref{L2+EE2}) and (\ref{L2+EE1}) we see that the geometric multiplicity of $\lambda(\tau)$ is $1$ and that $(\phi(\tau),\eta(\tau))$ continuously depends on $\tau$.

We can easily check that the adjoint operator of $\calL$ is
\[
\calL^*=\left(
\begin{array}{cc}
L & \frac{\beta}{\tau}\\
-\alpha\left<\,\cdot\,,1\right> & -\frac{\gamma}{\tau}
\end{array}
\right).
\]
Since $\lambda(\tau)$ is an eigenvalue of $\calL$, $\overline{\lambda(\tau)}$ is an eigenvalue of $\calL^*$ with the same geometric and algebraic multiplicities as the eigenvalue $\lambda(\tau)$ of $\calL$.
We consider the eigenvalue problem
\[
\left(\calL^*-\overline{\lambda(\tau)}{\rm Id}\right)
\left(\begin{array}{c}\phi^*(\tau)\\\eta^*(\tau)\end{array}\right)=0,
\]
where $(\phi^*(\tau),\eta^*(\tau))$ is an associated eigenvector.
By the same procedure as above we have
\[
\phi^*(\tau)=
\left(L-\overline{\lambda(\tau)}\right)^{-1}[1]
=\frac{-\left(3+\overline{\lambda(\tau)}\right)+3(u^{\pm}_n)^2}
{\overline{\lambda(\tau)}^2+2\overline{\lambda(\tau)}-3(1-\rho^2)^2}
\ \ \textrm{and}\ \ 
\eta^*(\tau)=\frac{-\alpha\left<\phi^*(\tau),1\right>}{\tau\overline{\lambda(\tau)}+\gamma}.
\]
Thus, $(\phi^*(\tau),\eta^*(\tau))$ also continuously depends on $\tau$.
We show by contradiction that the algebraic multiplicity of $\lambda (\tau)$ is $1$.
Suppose the contrary, {\it i.e.,} there exists
\[
(\psi(\tau),\zeta(\tau))\in{\rm Ker}(\calL-\lambda(\tau){\rm Id})^2\setminus{\rm Ker}(\calL-\lambda(\tau){\rm Id})
\]
such that
\[
\left(\begin{array}{c}\phi(\tau) \\\eta(\tau)\end{array}\right)
=\left(\calL-\lambda(\tau){\rm Id}\right)
\left(\begin{array}{c}\psi(\tau) \\\zeta(\tau)\end{array}\right).
\]
Let
\[
J(\tau):=
\left<
\left(
\begin{array}{c}
\phi(\tau)\\
\eta(\tau)
\end{array}
\right),
\left(
\begin{array}{c}
\phi^*(\tau)\\
\eta^*(\tau)
\end{array}
\right)
\right>=
\int_0^1\phi(\tau)\overline{\phi^*(\tau)}dx+\eta(\tau)\overline{\eta^*(\tau)}.
\]
Then,
\begin{align*}
J(\tau)
&=\left<\left(\calL-\lambda(\tau){\rm Id}\right)
\left(\begin{array}{c}\psi(\tau) \\\zeta(\tau)\end{array}\right),
\left(\begin{array}{c}\phi^*(\tau) \\\eta^*(\tau)\end{array}\right) \right>\\
&=\left<
\left(\begin{array}{c}\psi(\tau) \\\zeta(\tau)\end{array}\right),
\left(\calL^*-\overline{\lambda(\tau)}{\rm Id}\right)
\left(\begin{array}{c}\phi^*(\tau) \\\eta^*(\tau)\end{array}\right)
\right>=0.
\end{align*}
On the other hand, we show later that $J(\tau)\neq 0$ if $|\tau-\tau_n|$ is small.
This is a contradiction, and hence the algebraic multiplicity of $\lambda(\tau)$ is $1$.

It is enough to show that
\begin{equation}\label{L2+EE3}
J(\tau_n)\neq 0,
\end{equation}
since $J(\tau)$ is continuous in $\tau$.
Let $\phi:=\phi(\tau_n)$ and $\phi^*:=\phi^*(\tau_n)$ for simplicity.
Since $h(i\lIn)=0$, we have
\begin{equation}\label{L2+E6}
\frac{\alpha\beta\left<\phi,1\right>}{i\tau\lIn+\gamma}=1.
\end{equation}
Since $\overline{\phi^*}=\phi$, by (\ref{L2+E6}) we have
\begin{multline*}
\int_0^1\phi\overline{\phi^*}dx+\eta\overline{\eta^*}
=\int_0^1\phi^2dx+\frac{\beta\left<\phi,1\right>}{i\tau\lIn+\gamma}
\overline{\frac{-\alpha\tau\left<\phi^*,1\right>}{-i\tau\lIn+\gamma}}\\
=\int_0^1\phi^2dx+\frac{\beta\left<\phi,1\right>}{i\tau\lIn+\gamma}\frac{-\alpha\tau\left<\phi,1\right>}{i\tau\lIn+\gamma}
=\int_0^1\phi^2dx-\frac{\tau}{\alpha\beta}.
\end{multline*}
By (\ref{L2+EE2}) we have
\begin{equation}\label{L2+E7+}
\phi=(L-i\lIn)^{-1}[1]=\frac{-(3+i\lIn)+3(u^{\pm}_n)^2}{-\lIn^2+2i\lIn-3(1-\rho^2)^2}.
\end{equation}
Using (\ref{L2+E7+}) and (\ref{L2E2}), we have
\[
\int_0^1\phi^2dx-\frac{\tau}{\alpha\beta}
=\frac{\int_0^1\left\{(-3+3(u^{\pm}_n)^2)-i\lIn\right\}^2dx}{\left\{-\lIn^2+2i\lIn-3(1-\rho^2)^2\right\}^2}
-\frac{(\delta+2)\gamma}{\alpha\beta\left\{\lIn^2+3(1-\rho^2)^2\right\}}.
\]
It is enough to show that
\begin{equation}\label{L2+E8}
\int_0^1\left\{(-3+3(u^{\pm}_n)^2)-i\lIn\right\}^2dx
-\frac{(\delta+2)\gamma\left\{-\lIn^2+2i\lIn-3(1-\rho^2)^2\right\}^2}{\alpha\beta\left\{\lIn^2+3(1-\rho^2)^2\right\}}
\neq 0.
\end{equation}
Taking the imaginary part of the  LHS of (\ref{L2+E8}), we have
\[
6\lIn\left(1-\left<(u^{\pm}_n)^2,1\right>\right)+4\lIn\frac{(\delta+2)\gamma}{\alpha\beta}>0.
\]
Thus, (\ref{L2+EE3}) holds, and hence the conclusion of the lemma holds.
\end{proof}

\begin{lemma}\label{L3}
Let $n\ge 1$.
Assume that (\ref{T1E0}) holds if $n=1$ and that (\ref{T2E0}) holds if $n\ge 2$.
Let $\tau_n>0$ be given in Lemma~\ref{L2}~(i).
Then the following (i)--(iii) hold:\\
(i) If $n\ge 2$ and $\tau=\tau_n$, then
\[
\sigma_{\calL}\cap \calH_+\subset\{ \mu^{(n)}_1,\mu^{(n)}_2,\ldots,\mu^{(n)}_{n-1}\}
\ \ \textrm{and}\ \ \sigma_{\calL}\cap i\R=\{i\lambda_{{\rm I},n},-i\lambda_{{\rm I},n}\}.
\]
If $n=1$ and $\tau=\tau_1$, then
\[
\sigma_{\calL}\cap \calH_+=\emptyset\ \ \textrm{and}\ \ \sigma_{\calL}\cap i\R=\{i\lambda_{{\rm I},n},-i\lambda_{{\rm I},n}\}.
\]
(ii) If $n\ge 2$ and $0<\tau<\tau_n$, then
\[
\sigma_{\calL}\cap\left( i\R\cup \calH_+\right)\subset\{\mu^{(n)}_1,\mu^{(n)}_2,\ldots,\mu^{(n)}_{n-1}\}.
\]
If $n=1$ and $0<\tau<\tau_1$, then
\[
\sigma_{\calL}\cap(i\R\cup \calH_+)=\emptyset.
\]
(iii) If $n\ge 1$ and $\tau>\tau_n$, then $\sigma_{\calL}\cap \calH_+\neq\emptyset$.
\end{lemma}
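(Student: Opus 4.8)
The plan is to reduce everything to the location of the roots of the cubic $g(\,\cdot\,,\tau)$. By Lemma~\ref{L1}, $\sigma_{\calL}\subset\sigma_L\cup\sigma_g\cup\sigma_0$, and the two outer pieces are harmless: $\sigma_0=\{-\gamma/\tau\}\subset\calH_-$, while $\sigma_L=\{\mu_j^{(n)}\}$ is real with $0\notin\sigma_L$ and, by Proposition~\ref{S2P1}~(i), exactly $\mu_0^{(n)},\ldots,\mu_{n-1}^{(n)}$ positive. Since $\mu_0^{(n)}\notin\sigma_{\calL}$ by Lemma~\ref{L1+}~(v), the only positive real eigenvalues $\sigma_L$ can contribute to $\sigma_{\calL}$ lie in $\{\mu_1^{(n)},\ldots,\mu_{n-1}^{(n)}\}$ (empty when $n=1$); and because $g(0,\tau)>0$ by (\ref{L1+E0}) gives $0\notin\sigma_g$, we have $0\notin\sigma_{\calL}$, so neither $\sigma_L$ nor $\sigma_0$ contributes anything on $i\R$. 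Everything therefore hinges on controlling $\sigma_g\cap(i\R\cup\calH_+)$.

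For that I would apply the Routh--Hurwitz criterion to the cubic. Writing $g(\lambda,\tau)=a_3\lambda^3+a_2\lambda^2+a_1\lambda+a_0$ with
\[
a_3=\frac{\tau}{\gamma},\quad a_2=\frac{2\tau}{\gamma}+1,\quad a_1=(\delta+2)-\frac{3\tau}{\gamma}(1-\rho^2)^2,\quad a_0=g(0,\tau),
\]
I note that $a_0>0$ by (\ref{L1+E0}) and, crucially, is \emph{independent} of $\tau$. Introduce the Hurwitz determinant $H(\tau):=a_1a_2-a_0a_3$. A direct computation shows $H$ is a concave quadratic in $\tau$ (the coefficient of $\tau^2$ is $-6(1-\rho^2)^2/\gamma^2<0$) with $H(0)=\delta+2>0$, so its two roots have opposite signs and it has a unique positive zero $\tau^*$. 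At $\tau^*$, from $a_1a_2=a_0a_3>0$ and $a_2>0$ one gets $a_1(\tau^*)>0$, so $g(\,\cdot\,,\tau^*)$ has the purely imaginary roots $\pm i\sqrt{a_1/a_3}$; the uniqueness in Lemma~\ref{L2}~(i) then forces $\tau^*=\tau_n$. Hence $H>0$ on $(0,\tau_n)$ and $H<0$ on $(\tau_n,\infty)$.

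The three cases then read off immediately. For $0<\tau<\tau_n$ all of $a_0,a_2,H$ are positive, so Routh--Hurwitz places every root of $g$ in $\calH_-$; thus $\sigma_g\cap(i\R\cup\calH_+)=\emptyset$, and combined with the bookkeeping above this gives (ii). For $\tau>\tau_n$ we have $H<0$, so at least one root lies in the closed right half-plane; since $\tau\neq\tau_n$ there is no imaginary root and $0$ is never a root, that root lies in $\calH_+$, whence $\sigma_g\cap\calH_+\neq\emptyset$ and (iii) holds for \emph{all} $\tau>\tau_n$ at once. At $\tau=\tau_n$ the imaginary roots are exactly $\pm i\lIn$ by Lemma~\ref{L2}~(i); the remaining root $r$ is real, and the product of roots $\lIn^2\,r=-a_0/a_3<0$ forces $r<0$, so $\sigma_g\cap\calH_+=\emptyset$ and $\sigma_g\cap i\R=\{\pm i\lIn\}$. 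Together with $\mu_0^{(n)}\notin\sigma_{\calL}$ and $\sigma_0\subset\calH_-$ this yields (i).

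I expect the only delicate point to be bookkeeping rather than analysis: verifying that $\tau_n$ is the \emph{unique} positive zero of $H$ (so the sign of $H$ flips exactly at the critical value and nowhere else), and tracking which $\mu_j^{(n)}$ actually belong to $\sigma_{\calL}$ through Lemma~\ref{L1+}~(iv)--(v). The transversality $\frac{d\lR}{d\tau}(\tau_n)>0$ from Lemma~\ref{L2}~(iii) is consistent with the local crossing direction, but it is the Hurwitz determinant $H$ that upgrades the statement from a neighborhood of $\tau_n$ to all $\tau>0$.
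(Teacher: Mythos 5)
Your Routh--Hurwitz route is correct in substance and genuinely different from the paper's argument. The paper never forms a Hurwitz determinant: it shows in Lemma~\ref{L1+}~(ii) that $\sigma_g$ carries at most one complex-conjugate pair, obtains the pair $\lR(\tau)\pm i\lI(\tau)$ near $\tau_n$ by the implicit function theorem (Lemma~\ref{L2}~(ii)), and then argues by continuity and ``no crossing'': since $0\not\in\sigma_g$ by (\ref{L1+E0}) and $\tau_n$ is the unique parameter at which an imaginary pair exists, the pair cannot touch $i\R$ for $\tau\neq\tau_n$, hence stays in $\calH_-$ below $\tau_n$ and in $\calH_+$ above. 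Your observation that $H(\tau)=a_1a_2-a_0a_3$ is a concave quadratic in $\tau$ with $H(0)=\delta+2>0$, so with a unique positive zero, buys two things the paper leaves implicit: it controls \emph{all three} roots of $g$ at once (in particular it rules out positive \emph{real} roots of $g$ on $(0,\tau_n)$, which the paper's proof of (ii) does not explicitly address, since it only tracks the complex pair), and it yields the conclusions for every $\tau>0$ in one stroke instead of propagating them outward from a neighborhood of $\tau_n$. The determination of the third root at $\tau=\tau_n$ from the product $\lIn^2\,r=-a_0/a_3<0$ is likewise a clean shortcut. Incidentally, you can identify $\tau^*=\tau_n$ without any appeal to eigenvalues: by the construction in Lemma~\ref{L2}~(i), $\pm i\lIn$ are roots of $g(\,\cdot\,,\tau_n)$, which forces $H(\tau_n)=0$, so $\tau_n$ is the unique positive zero of $H$.

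There is, however, one genuine gap, in (iii): from $\sigma_g\cap\calH_+\neq\emptyset$ you conclude $\sigma_{\calL}\cap\calH_+\neq\emptyset$, but Lemma~\ref{L1} gives only the one-way inclusion $\sigma_{\calL}\subset\sigma_L\cup\sigma_g\cup\sigma_0$; a root of $g$ is not a priori an eigenvalue of $\calL$. The converse direction holds via Proposition~\ref{S2P5}~(ii): if $\lambda\not\in\sigma_L\cup\sigma_0$ and $g(\lambda,\tau)=0$, then $h(\lambda)=0$ by (\ref{L1E3++}) and $\lambda\in\sigma_{\calL}$, with eigenvector built from $(L-\lambda)^{-1}[1]$. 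For a nonreal root this is automatic, since $\sigma_L\cup\sigma_0\subset\R$; but for $\tau$ well above $\tau_n$ your right-half-plane root may be real (indeed, by the proof of Theorem~\ref{T2}~(ii), for large $\tau$ all roots of $g$ are real, one of them close to $\mu_0^{(n)}>0$), and then you must check that it does not lie in $\sigma_L$, where the equivalence between $h(\lambda)=0$ and $g(\lambda,\tau)=0$ breaks down. For $n=1$ the only positive point of $\sigma_L$ is $\mu^{(1)}_0$, and the factored form (\ref{L1E3++}) gives $g(\mu^{(n)}_0,\tau)=\delta\bigl\{\mu^{(n)}_0+3\bigl(1-\left<(u^{\pm}_n)^2,1\right>\bigr)\bigr\}>0$, so $\mu^{(n)}_0$ is never a root of $g$ and the right-half-plane root is indeed an eigenvalue; for $n\ge 2$, (iii) is immediate anyway because $\mu^{(n)}_1>0$ belongs to $\sigma_{\calL}$ for every $\tau$ by Lemma~\ref{L1+}~(iv). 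With this supplement your proof is complete.
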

\begin{proof} 
(i) Let $\tau=\tau_n$.
It follows from Lemma~\ref{L1+}~(ii) and Lemma~\ref{L2}~(i) that two pure imaginary eigenvalues $\pm i\lambda_{\rm{I},n}$ exist and the other eigenvalues are real.
Because of Lemmas~\ref{L1} and \ref{L1+}~(i) and (v),
we see that $\sigma_{\calL}\cap\calH_+\subset\{\mu^{(n)}_1,\ldots,\mu^{(n)}_{n-1}\}$ for $n\ge 2$ and that $\sigma_{\calL}\cap\calH_+=\emptyset$ for $n=1$.\\
(ii) It follows from Lemma~\ref{L1+}~(ii) and Lemma~\ref{L2}~(ii) that if $\tau$ is close to $\tau_n$, then $\sigma_{\calL}$ has exactly one pair of complex conjugate eigenvalues which are denoted by $\lR(\tau)\pm i\lI(\tau)$ and they are near $i\R$.
Since $\lR(\tau)\pm i\lI(\tau)$ are roots of $g(\lambda,\tau)=0$, they depend continuously on $\tau$.
Because of the uniqueness of $\tau_n$ and Lemma~\ref{L1+}~(iii), the eigenvalues $\lR(\tau)\pm i\lI(\tau)$ cannot be on $i\R$ for $\tau<\tau_n$, and hence $\lR(\tau)\pm i\lI(\tau)\in\calH_-$ for $\tau<\tau_n$.
Thus the conclusion holds.\\
(iii) By a similar argument as in (ii) we see that $\lR(\tau)\pm i\lI(\tau)\in\calH_+$ for $\tau>\tau_n$, since $\lR(\tau)\pm i\lI(\tau)\in\calH_+$ cannot be on $i\R$ for $\tau>\tau_n$.
Thus the conclusion holds.
\end{proof}

\section{Asymptotic formulas}
\begin{lemma}
The following asymptotic formulas hold: As $\e\to 0$,
\begin{align}
\lIn &=24\sqrt{\frac{\delta}{\delta+2}}\left({\sqrt{2}n\e}\right)^{1/2}\exp\left(-\frac{1}{\sqrt{2}n\e}\right)(1+o(1)),\label{L4E1}\\
\tau_n &=\frac{\gamma(\delta+2)}{192}\exp\left(\frac{\sqrt{2}}{n\e}\right)(1+o(1)),\label{L4E2}\\
\chi_n&=\frac{16\sqrt{2}}{n\e}\exp\left(-\frac{\sqrt{2}}{n\e}\right)(1+o(1)).\label{L4E3}
\end{align}
\end{lemma}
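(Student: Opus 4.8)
The plan is to reduce all three quantities to elementary functions of $\kn$ and then feed in two basic inputs: Proposition~\ref{S2P4}, which gives $1-\kn^2=16\exp(-1/(\sqrt2 n\e))(1+o(1))$, and the defining relation (\ref{e}), which gives $K(\kn)=1/(2n\e\sqrt{1+\kn^2})$ and hence $K(\kn)=(1+o(1))/(2\sqrt2 n\e)$ together with $\kn\to1$ and $E(\kn)\to1$ as $\e\to0$. Since $\rho^2=2\kn^2/(1+\kn^2)$ we have $1-\rho^2=(1-\kn^2)/(1+\kn^2)$, so combining Proposition~\ref{S2P4} with $1+\kn^2\to2$ yields the workhorse estimate $(1-\rho^2)^2=64\exp(-\sqrt2/(n\e))(1+o(1))$.

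First I would record the remaining elementary asymptotics and prove (\ref{L4E3}). Writing $a:=1-\left<(u^{\pm}_n)^2,1\right>$, Proposition~\ref{S2P3} gives $a=\{2E(\kn)-(1-\kn^2)K(\kn)\}/\{(1+\kn^2)K(\kn)\}$; its numerator tends to $2$ (the term $(1-\kn^2)K(\kn)$ is exponentially small) while its denominator is $\sim2K(\kn)\sim1/(\sqrt2 n\e)$, so $a=2\sqrt2 n\e(1+o(1))$, which is only \emph{linearly} small in $\e$. Substituting the same ingredients into the definition (\ref{delta}) gives (\ref{L4E3}): the numerator $(1-\kn^2)^2K(\kn)\sim256\,e^{-\sqrt2/(n\e)}/(2\sqrt2 n\e)$, the denominator tends to $4$, and simplification produces $\chi_n=(16\sqrt2/(n\e))e^{-\sqrt2/(n\e)}(1+o(1))$. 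As a consistency check I would note the exact identity $a\chi_n=(1-\rho^2)^2$, immediate from the two definitions, whose leading orders indeed agree: $2\sqrt2 n\e\cdot(16\sqrt2/(n\e))e^{-\sqrt2/(n\e)}=64\,e^{-\sqrt2/(n\e)}$.

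The main step, and the one I expect to be the chief obstacle, is (\ref{L4E1}), because the exact formula (\ref{L2E6}) writes $\lIn^2$ as a difference of two quantities each of which tends to $2(\delta+2)$ as $\e\to0$, so a delicate cancellation must be resolved to leading order. Setting $P:=2(\delta+2)-3\delta a$, so that $D=P^2+24(\delta+2)(1-\rho^2)^2$ in (\ref{D}), I would expand $\sqrt{D}=P\sqrt{1+24(\delta+2)(1-\rho^2)^2/P^2}=P+12(\delta+2)(1-\rho^2)^2/P+O((1-\rho^2)^4)$. The bracket in (\ref{L2E6}) then becomes $\sqrt{D}-\{P+6(1-\rho^2)^2\}=6(1-\rho^2)^2\{2(\delta+2)/P-1\}+O((1-\rho^2)^4)=18\delta a(1-\rho^2)^2/P+O((1-\rho^2)^4)$, using $2(\delta+2)-P=3\delta a$. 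The crux is to verify that the remainder is negligible: since $(1-\rho^2)^2$ is exponentially small while $a$ is only linearly small, $(1-\rho^2)^4=o(a(1-\rho^2)^2)$, so with $P\to2(\delta+2)$ the leading behaviour is $\lIn^2=\frac{9\delta a(1-\rho^2)^2}{2(\delta+2)}(1+o(1))$. Inserting $a\sim2\sqrt2 n\e$ and $(1-\rho^2)^2\sim64\,e^{-\sqrt2/(n\e)}$ gives $\lIn^2=(576\sqrt2\,\delta/(\delta+2))\,n\e\,e^{-\sqrt2/(n\e)}(1+o(1))$; taking the square root (and using $\sqrt2/2=1/\sqrt2$ in the exponent) yields (\ref{L4E1}).

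Finally, (\ref{L4E2}) follows from the exact relation (\ref{L2E2}), namely $\tau_n=(\delta+2)\gamma/(\lIn^2+3(1-\rho^2)^2)$. From the previous step $\lIn^2=O(\e\,e^{-\sqrt2/(n\e)})$, whereas $3(1-\rho^2)^2=O(e^{-\sqrt2/(n\e)})$ carries no factor of $\e$; hence $\lIn^2=o((1-\rho^2)^2)$ and the denominator is dominated by $3(1-\rho^2)^2$. Therefore $\tau_n=(\delta+2)\gamma/(3(1-\rho^2)^2)(1+o(1))=((\delta+2)\gamma/192)\,e^{\sqrt2/(n\e)}(1+o(1))$, which is (\ref{L4E2}), completing the proof.
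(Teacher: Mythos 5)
Your proposal is correct and takes essentially the same route as the paper: both rest on Propositions~\ref{S2P3} and \ref{S2P4}, and both resolve the delicate cancellation in (\ref{L2E6}) by extracting the term $3\delta\left(1-\left<(u^{\pm}_n)^2,1\right>\right)=O(E(\kn)/K(\kn))\sim 2\sqrt{2}n\e$ against the exponentially small $(1-\rho^2)^2$ — the paper by multiplying by the conjugate $\sqrt{D}+P$, you by Taylor-expanding $\sqrt{1+x}$, which is the same computation. Your derivation of (\ref{L4E2}) from (\ref{L2E2}) with $\lIn^2=o\bigl((1-\rho^2)^2\bigr)$ is the paper's formula $\tau_n=2(\delta+2)\gamma/\bigl(\sqrt{D}-P\bigr)$ repackaged (since $Z_+=\lIn^2+3(1-\rho^2)^2$), and all your constants check out.
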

\begin{proof}
We derive asymptotic formulas as $\e\to 0$.
Then, $\kn\to 1$ as $\e\to 0$.
First, we derive an asymptotic formula of $E(\kn)/K(\kn)$ as $\kn\to 1$ to compare $E(\kn)/K(\kn)$ and $1-\kn^2$.
By Lemmas~\ref{AL2} and \ref{S2P4} we have
\begin{multline}\label{L4PE-1}
K(\kn)=2\log 2-\frac{1}{2}\log(1-\kn^2)+o(1)\\
=2\log 2-\frac{1}{2}\log\left(16\exp\left(-\frac{1}{\sqrt{2}n\e}\right)(1+o(1))\right)+o(1)
=\frac{1}{2\sqrt{2}n\e}+o(1).
\end{multline}
Since $E(\kn)=1+o(1)$, we have
\begin{equation}\label{L4PE-1+1}
\frac{E(\kn)}{K(\kn)}=2\sqrt{2}n\e +o(1).
\end{equation}
Since $1-\kn^2=16\exp(-1/\sqrt{2}n\e)(1+o(1))$ (Proposition~\ref{S2P4}), we see that
\begin{equation}\label{L4PE0}
1-\kn^2=o\left(\frac{E(\kn)}{K(\kn)}\right).
\end{equation}
In particular, $(1-\kn^2)K(\kn)\to 0$ as $\kn\to 0$.

Next, we prove (\ref{L4E1}), (\ref{L4E2}) and (\ref{L4E3}).
By Proposition~\ref{S2P3} we have
\begin{equation}\label{L4PE1}
1-\left<(u^{\pm}_n)^2,1\right>=
1-\frac{2}{1+\kn^2}\left(1-\frac{E(\kn)}{K(\kn)}\right)
=\frac{E(\kn)}{K(\kn)}(1+o(1)).
\end{equation}
Let $D$ be defined by (\ref{D}). Then by (\ref{L4PE1}) we have
\begin{multline}\label{L4PE3}
(\sqrt{D})^2-\left\{2(\delta+2)-3\delta\left(1-\left<(u^{\pm}_n)^2,1\right>\right)\right\}^2\\
=24(\delta+2)(1-\rho^2)^2
=24(\delta+2)\left(\frac{1-\kn^2}{1+\kn^2}\right)^2.
\end{multline}
On the other hand, by (\ref{L4PE1}) we have
\begin{align}
\sqrt{D}+&\left\{2\left(\delta+2)-3\delta(1-\left<(u^{\pm}_n)^2,1\right>\right)\right\}\nonumber\\
&=\left\{2(\delta+2)-3\delta\left(1-\left<(u^{\pm}_n)^2,1\right>\right)\right\}
\left[1+\frac{24(\delta+2)(1-\rho^2)^2}
{\left\{2(\delta+2)-3\delta\left(1-\left<(u^{\pm}_n)^2,1\right>\right)\right\}^2}\right]^{1/2}\nonumber\\
&\qquad+2(\delta+2)-3\delta\left(1-\left<(u^{\pm}_n)^2,1\right>\right)\nonumber\\
&=\left\{2(\delta+2)-3\delta\left(1-\left<(u^{\pm}_n)^2,1\right>\right)\right\}
(2+O((1-\rho^2)^2))\nonumber\\
&=4(\delta+2)-6\delta\frac{E(\kn)}{K(\kn)}+O((1-\kn^2)^2).\label{L4PE4}
\end{align}
By (\ref{L4PE4}) and (\ref{L4PE3}) we have
\begin{align}
\sqrt{D}-&\left\{2(\delta+2)-3\delta\left(1-\left<(u^{\pm}_n)^2,1\right>\right)\right\}
=\frac{(\sqrt{D})^2-\left\{2(\delta+2)-3\delta\left(1-\left<(u^{\pm}_n)^2,1\right>\right)\right\}^2}{\sqrt{D}+\left\{2(\delta+2)-3\delta\left(1-\left<(u^{\pm}_n)^2,1\right>\right)\right\}}\nonumber\\
&=\frac{24(\delta+2)\left(\frac{1-\kn^2}{1+\kn^2}\right)^2}{4(\delta+2)-6\delta\frac{E(\kn)}{K(\kn)}+O((1-k^2)^2)}\nonumber\\
&=6\left(\frac{1-\kn^2}{1+\kn^2}\right)^2\left(
1+\frac{3\delta}{2(\delta+2)}\frac{E(\kn)}{K(\kn)}+o\left(\frac{E(\kn)}{K(\kn)}\right)+O((1-\kn^2)^2)
\right).\label{L4PE5}
\end{align}
Since $\lIn^2$ is given by (\ref{L2E6}), by (\ref{L4PE5}) and (\ref{L4PE0}) we have
\begin{align*}
\lIn^2
&=\frac{1}{2}\left[
\sqrt{D}-\left\{2(\delta+2)-3\delta\left(1-\left<(u^{\pm}_n)^2,1\right>\right)\right\}-6(1-\rho^2)^2
\right]\\
&=\frac{1}{2}6\left(\frac{1-\kn^2}{1+\kn^2}\right)^2\frac{3\delta}{2(\delta+2)}\frac{E(\kn)}{K(\kn)}(1+o(1)).
\end{align*}
Using (\ref{L4PE-1+1}), Proposition~\ref{S2P4} and the relation $1+\kn^2=2+o(1)$, we obtain (\ref{L4E1}).
By (\ref{L4PE5}) and Proposition~\ref{S2P4} we have
\[
\tau=\frac{2(\delta+2)\gamma}{\sqrt{D}-\left\{2(\delta+2)-3\delta\left(1-\left<(u^{\pm}_n)^2\right>\right)\right\}}=\frac{(\delta+2)\gamma}{3}\left(\frac{1+\kn^2}{1-\kn^2}\right)^2(1+o(1)).
\]
Then, we obtain (\ref{L4E2}).
Let $\chi_n$ be given by (\ref{T1E0}).
Using (\ref{L4PE-1}), Proposition~\ref{S2P4} and the relation $E(\kn)=1+o(1)$, we have
\[
\chi_n=\frac{(1-\kn^2)^2K(\kn)}{(1+\kn^2)\left\{2E(\kn)-(1-\kn^2)K(\kn)\right\}}
=\frac{256\exp\left(-\frac{\sqrt{2}}{n\e}\right)\frac{1}{2\sqrt{2}n\e}}{(2+o(1))(2+o(1))}.
\]
Then, we obtain (\ref{L4E3}).
The proof is complete.
\end{proof}

\section{Proofs of Theorems \ref{T1} and \ref{T2}}
We consider the abstract evolution equation with a parameter $\tau$ in a Banach space $\calZ$
\begin{equation}\label{S5E1}
\frac{dx}{dt}=F(x,\tau).
\end{equation}
We will use the Hopf bifurcation theorem of Crandall-Rabinowitz~\cite{CR77}.
The following formulation is due to \cite[Theorem I.8.2]{K04}.
\begin{theorem}\label{S5T1}
Suppose that $F(x,\tau)$ satisfies the following (\ref{S5T1E1})--(\ref{S5T1E4}):
\begin{align}
& \textrm{$F:\calU\times\calV\to\calZ$, where $0\in\calU\subset\calX$ and $\tau_c\in\calV\subset\R$ are open neighborhoods,}\label{S5T1E1}\\
& \textrm{$F(0,\tau_c)=0$ and $D_xF(0,\tau)$ exists in $L(\calX,\calZ)$ for all $\tau\in\calV$},\label{S5T1E2}\\
& \textrm{$\calX\subset\calZ$ is continuously embedded},\label{S5T1E3}\\
& \textrm{$F\in C^3(\calU\times\calV,\calZ)$}.\label{S5T1E4}
\end{align}
Suppose that the following (\ref{S5T1E5})--(\ref{S5T1E7}) hold:
\begin{align}\label{S5T1E5} 
\begin{aligned}
&\textrm{$A_0:=D_xF(0,\tau_c)$ as a mapping in $\calZ$ with dense domain $D(A_0)=:\calX$ generates}\\
&\textrm{an analytic semigroup $e^{A_0t}\in L(\calZ,\calZ)$, $t\ge 0$, that is compact for $t>0$,}
\end{aligned}
\end{align}
\begin{align}\label{S5T1E6}
\begin{aligned}
& \textrm{$D_xF(0,\tau)\vphi(\tau)=\lambda(\tau)\vphi(\tau)$ with $\lambda(\tau_c)=i\kappa_0\in i\R\setminus\{0\}$, $\lambda(\tau)$ are simple eigenvalues}\\
& \textrm{ and ${\rm Re}\frac{d\lambda}{d\tau}(\tau_c)\neq 0$ (transversality condition),}
\end{aligned}
\end{align}
\begin{align}\label{S5T1E7}
& \textrm{$in\kappa_0$ is not an eigenvalue of $A_0$ for all $n\in\mathbb{Z}\setminus\{1,-1\}$.}
\end{align}
Then, there exist $\rho_0>0$, a continuous function $\kappa(\rho)$, $|\rho|<\rho_0$, and a continuous differentiable curve $(x(\rho),\tau(\rho))$ of $2\pi/\kappa(\rho)$-periodic solutions of (\ref{S5E1}) through $(x(0),\tau(0))=(0,\tau_c)$ with $2\pi/\kappa(0)=2\pi/\kappa_0$.
Every other periodic solution of (\ref{S5E1}) in a neighborhood of $(0,\tau_c)$ is obtained by a phase shift $S_{\theta}x(\rho)$.
In particular, $x(-\rho)=S_{\pi/\kappa(\rho)}x(\rho)$, $\kappa(-\rho)=\kappa(\rho)$ and $\tau(-\rho)=\tau(\rho)$ for all $|\rho|<\rho_0$.
\end{theorem}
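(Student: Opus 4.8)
The plan is to recognize this as the abstract Hopf bifurcation theorem in the Crandall--Rabinowitz formulation and to reduce it to a bifurcation-from-a-simple-eigenvalue problem posed in a space of $2\pi$-periodic functions. Since the period of the sought solution is unknown a priori, the first step is to rescale time by setting $x(t)=\tilde x(\kappa t)$ with frequency $\kappa$ near $\kappa_0$, and to look for $2\pi$-periodic $\tilde x$; under this substitution (\ref{S5E1}) becomes $\kappa\,\tilde x'(s)=F(\tilde x,\tau)$. Introducing the operator $G(\tilde x,\kappa,\tau):=\kappa\,\tilde x'-F(\tilde x,\tau)$ acting between suitable Banach spaces of $2\pi$-periodic functions built from $\calX$ and $\calZ$, the nontrivial zeros of $G$ with $\kappa>0$ are exactly the periodic solutions we want, while $G(0,\kappa,\tau)=0$ holds for all $\kappa,\tau$ by (\ref{S5T1E2}).

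Next I would analyze the linearization $D_{\tilde x}G(0,\kappa_0,\tau_c)=\kappa_0\,\frac{d}{ds}-A_0$. Expanding in Fourier modes $\tilde x(s)=\sum_n x_n e^{ins}$, a mode lies in the kernel precisely when $in\kappa_0\in\sigma(A_0)$. The non-resonance hypothesis (\ref{S5T1E7}) excludes every $n\neq\pm1$, and the simplicity of the eigenvalues $\pm i\kappa_0$ from (\ref{S5T1E6}) then forces the kernel to be exactly two-dimensional, spanned by the real and imaginary parts of $e^{is}\vphi(\tau_c)$. The analytic-semigroup assumption (\ref{S5T1E5}), together with the $C^3$-regularity (\ref{S5T1E4}) and the continuous embedding (\ref{S5T1E3}), is what guarantees that $G$ is smooth and that this linearization is Fredholm of index zero with the spectral gap needed to run a Lyapunov--Schmidt reduction.

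The decisive structural feature is the $S^1$-symmetry: because the rescaled equation in $s$ is autonomous, $G$ commutes with the phase shift $S_\theta\tilde x(\cdot)=\tilde x(\cdot+\theta)$. The plan is to use this equivariance to fix the phase (equivalently, to quotient by the circle action), which collapses the nominally two-dimensional kernel to an effectively one-dimensional bifurcation equation in an amplitude parameter $\rho$. On this reduced equation the transversality condition $\mathrm{Re}\,\frac{d\lambda}{d\tau}(\tau_c)\neq0$ from (\ref{S5T1E6}) states precisely that the relevant simple eigenvalue crosses the imaginary axis with nonzero speed, so the Crandall--Rabinowitz bifurcation-from-a-simple-eigenvalue theorem, applied via the implicit function theorem, yields a continuous frequency $\kappa(\rho)$ and a continuously differentiable branch $(\tilde x(\rho),\tau(\rho))$ emanating from $\rho=0$ with $\kappa(0)=\kappa_0$. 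The stated relations $x(-\rho)=S_{\pi/\kappa(\rho)}x(\rho)$, $\kappa(-\rho)=\kappa(\rho)$ and $\tau(-\rho)=\tau(\rho)$ then follow from the phase-shift equivariance together with the antipodal way in which $\rho$ parametrizes a single orbit.

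I expect the main obstacle to be the functional-analytic bookkeeping rather than any single estimate: the periodic function spaces must be chosen so that $G$ is genuinely $C^3$ and Fredholm, which uses the full strength of (\ref{S5T1E5}), and one must verify carefully that the phase-shift symmetry reduces the two-dimensional kernel to a simple eigenvalue of the reduced problem, since otherwise the transversality condition could not be invoked. As this is exactly the content of \cite[Theorem~I.8.2]{K04}, in practice I would carry out the reduction only to the point of matching the hypotheses and then cite that reference for the detailed construction.
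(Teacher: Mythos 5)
Your proposal is correct and coincides with the paper's treatment: the paper does not prove Theorem~\ref{S5T1} at all but quotes it verbatim from \cite[Theorem I.8.2]{K04}, which is exactly the reference you fall back on, and your sketch (time rescaling to period $2\pi$, Fourier analysis showing the kernel of $\kappa_0\frac{d}{ds}-A_0$ is two-dimensional via (\ref{S5T1E6})--(\ref{S5T1E7}), $S^1$-equivariant Lyapunov--Schmidt reduction, and transversality feeding the implicit function theorem) is a faithful outline of the proof given there.
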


We briefly see that our problem (\ref{S1E1}) satisfies (\ref{S5T1E1})--(\ref{S5T1E5}).
Let
\[
\calX:=H^2_N(I)\times\C\ \ \textrm{and}\ \ \calZ:=L^2(I)\times\C.
\]
Then, (\ref{S5T1E3}) is satisfied.
Let $(\bu,\bxi)$ be a steady state of (\ref{S1E1}), and let $u(x,t):=\bu(x)+U(x,t)$ and $\xi(t)=\bxi+\Xi(t)$.
Then, (\ref{S1E1}) can be written as follows:
\[
\frac{\partial}{\partial t}\left(
\begin{array}{c}
U\\
\Xi
\end{array}
\right)=\calL_0\left(
\begin{array}{c}
U\\
\Xi
\end{array}
\right)+\calF(U,\Xi),
\]
where
\[
\calL_0\left(
\begin{array}{c}
U\\
\Xi
\end{array}
\right)=\left(
\begin{array}{c}
\left(\e^2\frac{\partial^2}{\partial x^2}+f'(\bu)\right) U-\alpha\Xi\\
\frac{\beta}{\tau}\left<U,1\right>-\frac{\gamma}{\tau}\Xi
\end{array}
\right),
\]
\[
\calF(U,\Xi):=
\left(
\begin{array}{c}
f(\bu+U)-f(\bu)-f'(\bu)U\\
0
\end{array}
\right).
\]
We define\[
F:=\calL_0+\calF.
\]
Because of the continuous embedding $H^2_N(I)\to C(I)$, there exist small neighborhoods $\calU\subset\calX$ and $\calV\subset\R$ such that $F:\calU\times\calV\to\calZ$ and $F\in C^3(\calU\times\calV,\calZ)$.
Therefore, (\ref{S5T1E1}) and (\ref{S5T1E4}) hold.
Since $\calF(0,\tau_c)=0$ and $\calL_0\in L(\calX,\calZ)$, the condition (\ref{S5T1E2}) is satisfied.

\begin{proposition}\label{S5P1}
Let $\calX$, $\calZ$ and $\calL_0$ be defined as above.
The operator $\calL_0$, whose domain is $\calX$, is a sectorial operator on $\calZ$, and hence it generates a strongly continuous analytic semigroup on $\calZ$.
Moreover, there are $\theta\in (\frac{\pi}{2},\pi)$ and $a>0$ such that
\[
S_{a,\theta}:=\left\{
z\in\C;\ |\arg(z-a)|<\theta\right\}\cup\{a\}
\]
is in the resolvent set of $\calL_0$, that, for $\lambda\in S_{a,\theta}$, the operator $\calR(\lambda,\calL_0):=(\lambda-\calL_0)^{-1}$ is compact as an operator of $L(\calZ,\calZ)$ and that there exists $M>0$ such that
\begin{equation}\label{S5P1E1}
\left\|\calR(\lambda,\calL_0)\right\|\le\frac{M}{|\lambda-a|}\ \ \textrm{for}\ \ \lambda\in S_{a,\theta}.
\end{equation}
\end{proposition}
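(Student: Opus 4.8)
The plan is to realize $\calL_0$ as a bounded perturbation of a block-diagonal self-adjoint operator, and then to apply the standard perturbation theory of sectorial operators. First I would split
\[
\calL_0=\mathcal{A}+\mathcal{B},\qquad
\mathcal{A}\begin{pmatrix}U\\\Xi\end{pmatrix}:=\begin{pmatrix}LU\\-\frac{\gamma}{\tau}\Xi\end{pmatrix},\qquad
\mathcal{B}\begin{pmatrix}U\\\Xi\end{pmatrix}:=\begin{pmatrix}-\alpha\Xi\\\frac{\beta}{\tau}\left<U,1\right>\end{pmatrix},
\]
where $L=\e^2\partial_{xx}+f'(\bu)$ has domain $H^2_N(I)$. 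The perturbation $\mathcal{B}$ is bounded on $\calZ$: the first component maps $\Xi\in\C$ to the constant function $-\alpha\Xi\in L^2(I)$, and the second is bounded since $|\left<U,1\right>|\le\|U\|_{L^2(I)}$ by the Cauchy--Schwarz inequality. Hence $\mathcal{B}\in L(\calZ,\calZ)$ with norm $b:=\|\mathcal{B}\|$, and $D(\calL_0)=D(\mathcal{A})=\calX$.

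Next I would establish that $\mathcal{A}$ is sectorial with compact resolvent. The operator $L$ is self-adjoint on $L^2(I)$ with the bounded real potential $f'(\bu)=1-3\bu^2$, so its spectrum is real, discrete and bounded above; by Proposition~\ref{S2P1} its largest eigenvalue is $\mu^{(n)}_0$. For any self-adjoint operator with $\sigma(L)\subset(-\infty,\mu^{(n)}_0]$ one has $\|(\lambda-L)^{-1}\|=1/{\rm dist}(\lambda,\sigma(L))$, and an elementary geometric estimate shows that for $\lambda$ in a sector $|\arg(\lambda-a_0)|<\theta$ with $a_0>\mu^{(n)}_0$ and $\theta\in(\frac{\pi}{2},\pi)$ one has ${\rm dist}(\lambda,(-\infty,\mu^{(n)}_0])\ge|\lambda-a_0|\sin\theta$; hence $L$ is sectorial with constant $M_0:=1/\sin\theta$. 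The scalar block $-\gamma/\tau<0$ satisfies the same estimate with the same vertex, so $\mathcal{A}$ is sectorial with $\|(\lambda-\mathcal{A})^{-1}\|\le M_0/|\lambda-a_0|$ on that sector. Its resolvent is compact because $(\lambda-L)^{-1}$ is compact (the embedding $H^2_N(I)\hookrightarrow L^2(I)$ is compact) and the second block acts on the finite-dimensional space $\C$.

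Then I would pass from $\mathcal{A}$ to $\calL_0=\mathcal{A}+\mathcal{B}$ by a Neumann series. Whenever $|\lambda-a_0|\ge 2bM_0$ one has $\|\mathcal{B}(\lambda-\mathcal{A})^{-1}\|\le bM_0/|\lambda-a_0|\le\frac12$, so $\Id-\mathcal{B}(\lambda-\mathcal{A})^{-1}$ is invertible with uniformly bounded inverse, and
\[
\calR(\lambda,\calL_0)=(\lambda-\mathcal{A})^{-1}\bigl(\Id-\mathcal{B}(\lambda-\mathcal{A})^{-1}\bigr)^{-1}.
\]
Choosing a new vertex $a>a_0$ and, if necessary, slightly reducing the angle so that the sector $S_{a,\theta}$ lies inside $\{|\lambda-a_0|\ge 2bM_0\}\cap S_{a_0,\theta}$, this formula puts $S_{a,\theta}$ in the resolvent set, yields the bound $\|\calR(\lambda,\calL_0)\|\le 2M_0/|\lambda-a_0|\le M/|\lambda-a|$ for a suitable $M>0$, which is exactly (\ref{S5P1E1}), and shows that $\calR(\lambda,\calL_0)$ is compact, being the composition of the compact operator $(\lambda-\mathcal{A})^{-1}$ with a bounded one. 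The sectorial estimate then gives that $\calL_0$ generates a strongly continuous analytic semigroup (strong continuity because $\calX$ is dense in $\calZ$), which is compact for $t>0$ thanks to the compactness of the resolvent.

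The calculations are routine; the only genuine subtlety is that $\calL_0$ is \emph{not} self-adjoint, because the off-diagonal entries $-\alpha$ and $\frac{\beta}{\tau}\left<\,\cdot\,,1\right>$ are not mutually adjoint, so sectoriality cannot be read off directly from a spectral theorem. This is precisely why the decomposition $\mathcal{A}+\mathcal{B}$ is the right device: the non-self-adjoint coupling is a bounded operator, and the bounded-perturbation theorem for sectorial operators (see the standard treatments of Henry and of Pazy) applies. The one place requiring a little care is the adjustment of the vertex $a$ and the angle $\theta$ so that the target sector $S_{a,\theta}$ sits inside the region where the Neumann series converges.
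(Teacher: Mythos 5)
Your proof is correct, but it follows a genuinely different route from the paper, which in fact gives no self-contained argument at all: the paper deduces that $S_{a,\theta}$ lies in the resolvent set from its explicit spectral localization $\sigma_{\calL}\subset\sigma_L\cup\sigma_g\cup\sigma_0$ (Lemmas~\ref{L1} and \ref{L1+}), cites \cite{GMW21} (or \cite{KEM92}) for the resolvent estimate (\ref{S5P1E1}), and omits the rest. Your decomposition $\calL_0=\mathcal{A}+\mathcal{B}$ into the self-adjoint block-diagonal part $\mathcal{A}={\rm diag}(L,-\gamma/\tau)$ plus the bounded off-diagonal coupling $\mathcal{B}$ replaces all of this by the standard bounded-perturbation theorem for sectorial operators, and the key steps check out: $\|(\lambda-\mathcal{A})^{-1}\|={\rm dist}(\lambda,\sigma(\mathcal{A}))^{-1}$ by self-adjointness; the geometric estimate ${\rm dist}(\lambda,(-\infty,\mu^{(n)}_0])\ge|\lambda-a_0|\sin\theta$ holds on $S_{a_0,\theta}$ for $a_0>\mu^{(n)}_0$ (the distance to the ray is $r$ for $|\arg(\lambda-a_0)|\le\pi/2$ and $r|\sin\arg(\lambda-a_0)|\ge r\sin\theta$ otherwise); the factorization $\lambda-\calL_0=(\Id-\mathcal{B}(\lambda-\mathcal{A})^{-1})(\lambda-\mathcal{A})$ with the Neumann series is valid once $|\lambda-a_0|\ge 2bM_0$; shifting the vertex to $a\ge a_0+2bM_0/\sin\theta$ does place $S_{a,\theta}$ inside $S_{a_0,\theta}\cap\{|\lambda-a_0|\ge 2bM_0\}$ (no angle reduction is actually needed, since ${\rm dist}(a_0,S_{a,\theta})=(a-a_0)\sin\theta$ and adding a positive real to $a_0-z$ only shrinks $|\arg|$); and the final comparison $|\lambda-a|\le(1+1/\sin\theta)|\lambda-a_0|$ on $S_{a,\theta}$, which you gloss over, converts your bound into (\ref{S5P1E1}) with $M=2M_0(1+1/\sin\theta)$. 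What your route buys is self-containedness and an argument insensitive to the fine structure of $\sigma_{\calL}$ (compactness of the resolvent and of the semigroup come for free from the compact embedding $H^2_N\hookrightarrow L^2$); what the paper's route buys is a sharper vertex $a$ tied to the actual spectrum, which your perturbative constant $a_0+2bM_0/\sin\theta$ deliberately sacrifices — harmless here, since the proposition asks only for some $a>0$.
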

It follows from Lemmas~\ref{L1} and \ref{L1+} that $S_{a,\theta}$ is in the resolvent set of $\calL_0$. 
A proof of (\ref{S5P1E1}) is almost the same as that of \cite[(4.25)]{GMW21}.
Another proof of (\ref{S5P1E1}) can be found in \cite[Section 5]{KEM92}.
Other assertions follow from a theory of an analytic semigroup.
We omit the proof.
The condition (\ref{S5T1E5}) follows from Proposition~\ref{S5P1}.\\

In proofs of Theorems~\ref{T1} and \ref{T2} we check (\ref{S5T1E6}) and (\ref{S5T1E7}) to apply Theorem~\ref{S5T1} to (\ref{S1E1}).
\begin{proof}[Proof of Theorem~\ref{T2}]
First, the existence and uniqueness of $\tau_n$ is proved in Lemma~\ref{L2}~(i).
The exact expression (\ref{T1E-1}) is also obtained in Lemma~\ref{L2}~(i).\\
(i) Let $0<\tau<\tau_n$.
It follows from Lemma~\ref{L3}~(ii) that
\[
\sigma_{\calL}\cap(i\R\cup \calH_+)\subset 
\left\{\mu^{(n)}_{n-1},\ldots,\mu^{(n)}_1\right\}.
\]
By Lemma~\ref{L1+}~(iv), there exists an eigenvalue $\mu^{(n)}_1\in\sigma_{\calL}$.
Since $\mu^{(n)}_1>0$, $(u^{\pm}_n,0)$ are unstable.
However, all the positive eigenvalues of $\sigma_{\calL}$ are of order $O(\e^{-C/\e})$, because $\mu_1^{(n)},\ldots,\mu_{n-1}^{(n)}$ are of order $O(\e^{-C/\e})$ (Proposition~\ref{S2P1}~(iii)).
Thus, $(u^{\pm}_n,0)$ are metastable.
The proof of (i) is complete.\\
(ii) We study $\sigma_g$.
The limit equation of $\gamma g(\lambda,\tau)/\tau=0$ as $\tau\to\infty$ becomes
\[
\lambda\{\lambda^2+2\lambda-3(1-\rho^2)^2\}=0,
\]
and hence $\lambda=0,\mu^{(n)}_0,\mu^{(n)}_n$.
Here, $\mu^{(n)}_0$ and $\mu^{(n)}_n$ are defined in Proposition~\ref{S2P1}~(i).
Hence, if $\tau>0$ is large, then the equation $g(\lambda,\tau)=0$ has three real roots which are close to $0,\mu^{(n)}_0,\mu^{(n)}_n$.
Thus, it follows from Lemma~\ref{L1} that all eigenvalues are real.
Suppose that $\lambda>\mu_0^{(n)}(>0)$.
By \eqref{L1E3++} we have
\[
0<\tau\lambda+\gamma=\alpha\beta\frac{-3(1-\left<(u^{\pm}_n)^2,1\right>)-\lambda}{(\lambda-\mu^{(n)}_0)(\lambda-\mu^{(n)}_n)}<0,
\]
which is a contradiction.
Therefore, every root is less than or equal to $\mu^{(n)}_0$.
By Lemma~\ref{L1} we have
\[
\sigma_{\calL}\subset \sigma_L\cup \sigma_g\cup\sigma_0 \subset(-\infty,\mu^{(n)}_0].
\]
(iii) When $|\tau-\tau_n|$ is small, the existence of one pair of complex conjugate eigenvalues $\lR(\tau)\pm i\lIn(\tau)$ follows from Lemma~\ref{L2}~(ii).
Other eigenvalues are real, because of Lemma~\ref{L1+}~(ii).
By (\ref{L1E0}), they are on $(-\infty,\mu^{(n)}_1]$, since $\mu^{(n)}_0\not\in\sigma_{\calL}$ (Lemma~\ref{L1+}~(v)).
Other assertions on $\lR(\tau)\pm i\lIn(\tau)$ follow from Lemma~\ref{L2}~(ii) and (iii), and the exact expression (\ref{T1E1}) is obtained in Lemma~\ref{L2}~(i).
A pair of complex conjugate eigenvalues is simple (Lemma~\ref{L2+}).
They cross the imaginary axis at $\tau=\tau_n$ (Lemma~\ref{L2}~(ii)) with non-zero speed (Lemma~\ref{L2}~(iii)).
There is no other complex eigenvalue (Lemma~\ref{L1+}~(ii)).
Therefore, (\ref{S5T1E6}) and (\ref{S5T1E7}) are satisfied.
It follows from Theorem~\ref{S5T1} that the assertions (iii) hold.\\
(iv) The formulas (\ref{T2F1}), (\ref{T2F2}) and (\ref{T2F3}) follow from (\ref{L4E1}) (\ref{L4E2}) and (\ref{L4E3}), respectively.\\
The proof is complete.
\end{proof}

\begin{proof}[Proof of Theorem~\ref{T1} ]
The proof of Theorem~\ref{T1} is almost the same as that of Theorem~\ref{T2}.
The different parts are about $\sigma_{\calL}\cap (i\R\cup\calH_+)$.
When $n=1$, $\sigma_{\calL}\cap (i\R\cup\calH_+)=\emptyset$ for $0<\tau<\tau_1$ (Lemma~\ref{L3}~(ii)), and hence the assertion (i) holds.
Because of Lemma~\ref{L3}~(iii), the assertion (ii) holds.
All the eigenvalues except a pair of complex conjugate eigenvalues are real and negative.
Hence, all the assertions in (iii) hold.
The proof is complete.
\end{proof}

\section{Antiphase horizontal oscillation}
We show that an anti-phase horizontal oscillation of the layers occurs near the Hopf bifurcation point, using a formal argument.

Let $(u(x,t;r),\xi(t;r),\tau(r))$, $|r|<r_0$, be a continuously differential curve of $2\pi/\kappa(r)$-periodic solutions of (\ref{S1E1}), which is obtained by Theorems~\ref{T1} and \ref{T2}, such that
\[
(u(x,t;0),\xi(t;0),\tau(0))=(u^{\pm}_n(x),0,\tau_n)\ \ \textrm{and}\ \ \frac{2\pi}{\kappa(0)}=\frac{2\pi}{\lIn}.
\]
By an abstract Hopf bifurcation theorem~\cite[Theorem I.9.1]{K04} we see that
\[
\left.\frac{d}{dr}u(x,t;r)\right|_{r=0}=2{\rm Re}\left(\phi e^{i\kappa(0)t}\right),
\]
where $(\phi,\eta)$ is an eigenvector of (\ref{EVP}) with respect to the eigenvalue $i\lIn$ and $\kappa(0)=\lIn$.
By (\ref{T2F1}) we see that
\begin{equation}\label{S6E1}
\lIn=O(\sqrt{\e}e^{-\frac{1}{\sqrt{2}n\e}}).
\end{equation}
By Proposition~\ref{S2P4} we see that
\begin{equation}\label{S6E2}
1-\rho^2=\frac{1-k^2}{1+k^2}=O(e^{-\frac{1}{\sqrt{2}n\e}}).
\end{equation}
By (\ref{S6E1}), (\ref{S6E2}) and Proposition~\ref{S2P2} we have
\begin{align}
\phi &=\frac{\lIn^3\left\{1-3(u^{\pm}_n)^2\right\}+i\lIn^2\left\{\lIn^2+3(1-\rho^2)^2+6-6(u^{\pm}_n)^2\right\}}{\left\{\lIn^2+3(1-\rho^2)^2\right\}^2+4\lIn^2}
\nonumber\\
&=i\frac{3}{2}\left\{1-(u^{\pm}_n)^2\right\}+O(\sqrt{\e}e^{-\frac{1}{\sqrt{2}n\e}}).\label{S6E3}
\end{align}
Since
$\left\{1-(u^{\pm}_n)^2\right\}^2=2\e^2\left\{(u^{\pm}_n)'\right\}^2+(1-\rho^2)^2$, 
we have
\begin{equation}\label{S6E4}
1-(u^{\pm}_n)^2=\sqrt{2}\e|(u^{\pm}_n)'|+O(e^{-\frac{1}{\sqrt{2}n\e}}).
\end{equation}
By (\ref{S6E3}) and (\ref{S6E4}) we have
\begin{equation}\label{S6E5}
2{\rm Re}(\phi e^{i\kappa(0)t})=-3\sqrt{2}\e|(u^{\pm}_n)'(x)|\sin(\lIn t)+O(e^{-\frac{1}{\sqrt{2}n\e}}).
\end{equation}
On the other hand, if $|r|$ is small, then the Taylor expansion in $r$ yields
\begin{align}
&u^{\pm}_n\left(x-3\sqrt{2}\e\,\sgn\!\left((u^{\pm}_n)'(x)\right)\sin(\lIn t)r\right)\nonumber\\
&\qquad =u^{\pm}_n(x)-3\sqrt{2}\e(u^{\pm}_n)'(x)\,\sgn\!\left((u^{\pm}_n)'(x)\right)\sin(\lIn t)r+o(r)\nonumber\\
&\qquad =u^{\pm}_n(x)-3\sqrt{2}\e |(u^{\pm}_n)'(x)|\sin(\lIn t)r+o(r).\label{S6E6}
\end{align}
By (\ref{S6E5}) and (\ref{S6E6}) we formally obtain
\[
u(x,t;r)\simeq u^{\pm}_n\left(x-3\sqrt{2}\e\,\sgn\!\left((u^{\pm}_n)'(x)\right)\sin(\lIn t)r\right).
\]
Since each layer of $u^{\pm}_n(x)$ is located at
\[
x=\frac{1+2\ell}{2n}\ \ \textrm{for}\ \ \ell=0,1,\ldots,n-1,
\]
we have
\[
x\simeq \frac{1+2\ell}{2n}+3\sqrt{2}\e\,{\rm sgn}\!\left((u^{\pm}_n)'(x)\right)\sin(\lIn t)r.
\]
We formally obtain a layer oscillation.
Specifically, a position of each layer of $u(x,t;r)$ is
\[
x\simeq \frac{1+4m}{2n}\mp 3\sqrt{2}\e\sin(\lIn t)r\ \ \textrm{for $m=0,1,\ldots,\left[\frac{n}{2}-\frac{1}{4}\right]$}
\]
and
\[
x\simeq \frac{3+4m}{2n}\pm 3\sqrt{2}\e\sin(\lIn t)r\ \ \textrm{for $m=0,1,\ldots,\left[\frac{n}{2}-\frac{3}{4}\right]$.}
\]
Here, $[\xi]$ indicates the largest integer that does not exceed $\xi$.
Therefore, an anti-phase horizontal oscillation of layers occurs near the Hopf bifurcation point.

\section{Numerical simulations}
In this section we show numerical simulations of solutions of the time evolution system (\ref{S1E1}) with initial data
\[
(u(x,0),\xi(0))=(u_0(x),\xi_0),
\]
where $(u_0(x),\xi_0)$ is near the stationary solution $(u_1^-,0)$ or $(u_2^-,0)$.
We set the parameters in (\ref{S1E1}) as follows:
\[
\alpha=0.5,\quad\beta=0.5,\quad\gamma=0.5
\]
and $\e$ and $\tau$ are given later.
The interval domain $(0,1)$ is discretized into $200$ equidistant points, {\it i.e.,} $\Delta x=0.005$.
The time is discretized as $t=n\Delta t$, $n=0,1,2,\cdots$, where $\Delta t=0.01$.
Figure~\ref{fig1} shows two initial data $(u_0(x),\xi_0)$.
\begin{figure}[ht]
 \includegraphics[keepaspectratio,width=16cm]{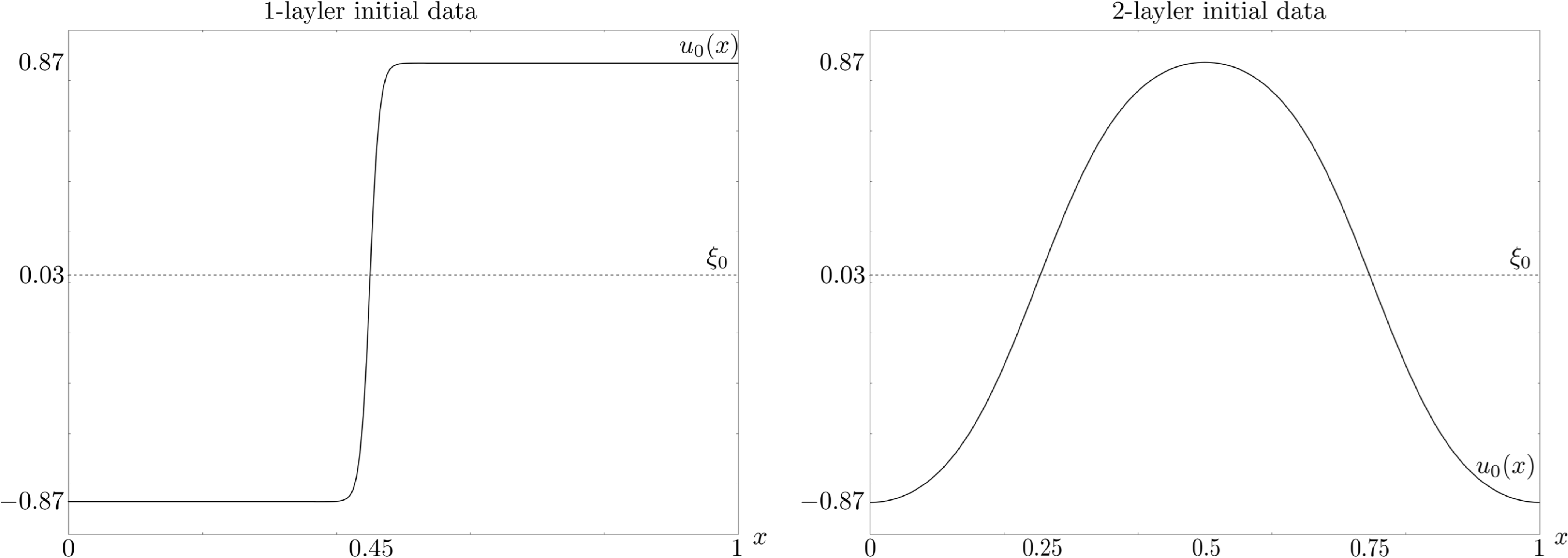}
 \caption{Two initial data.
The left function is used as an initial function $u_0$ in (A1) and (B1) in Figure~\ref{fig2}.
The right function is used as an initial function $u_0$ in (C1) and (D1) in Figure~\ref{fig2}.
In both initial data, $\xi_0=0.03$.
}\label{fig1}
\end{figure}
One is a function that is the $1$-layer stationary solution $(u_1^-,0)$ plus a positive perturbation (left),
and the other is close to the $2$-layer stationary solution $(u_2^-,0)$ (right).
In both initial data $\xi_0=0.03$.
Figure~\ref{fig2} shows bird views of four numerical solutions of (\ref{S1E1}).
Specifically, the graph of $u(x,t)$ is shown, but that of $\xi(t)$ is omitted.
The horizontal axis stands for the $x$-axis and the depth axis stands for the $t$-axis.
The parameters $\e$ and $\tau$ are chosen as Table~\ref{tab1} shows.
\begin{table}[ht]
\caption{$(\e,\tau)$ is chosen as follows:}\label{tab1}
\begin{tabular}{|c|cccc|}
\hline
 & (A1) & (B1) & (C1) & (D1)\\
\hline
$\e$ & $0.2$ & $0.2$ & $0.1$ & $0.1$\\
$\tau$ & $6.0$ & $6.7$ & $6.0$ & $6.7$\\
\hline
\end{tabular}
\end{table}
\begin{figure}[ht]
 \centering
 \includegraphics[keepaspectratio,width=16cm]{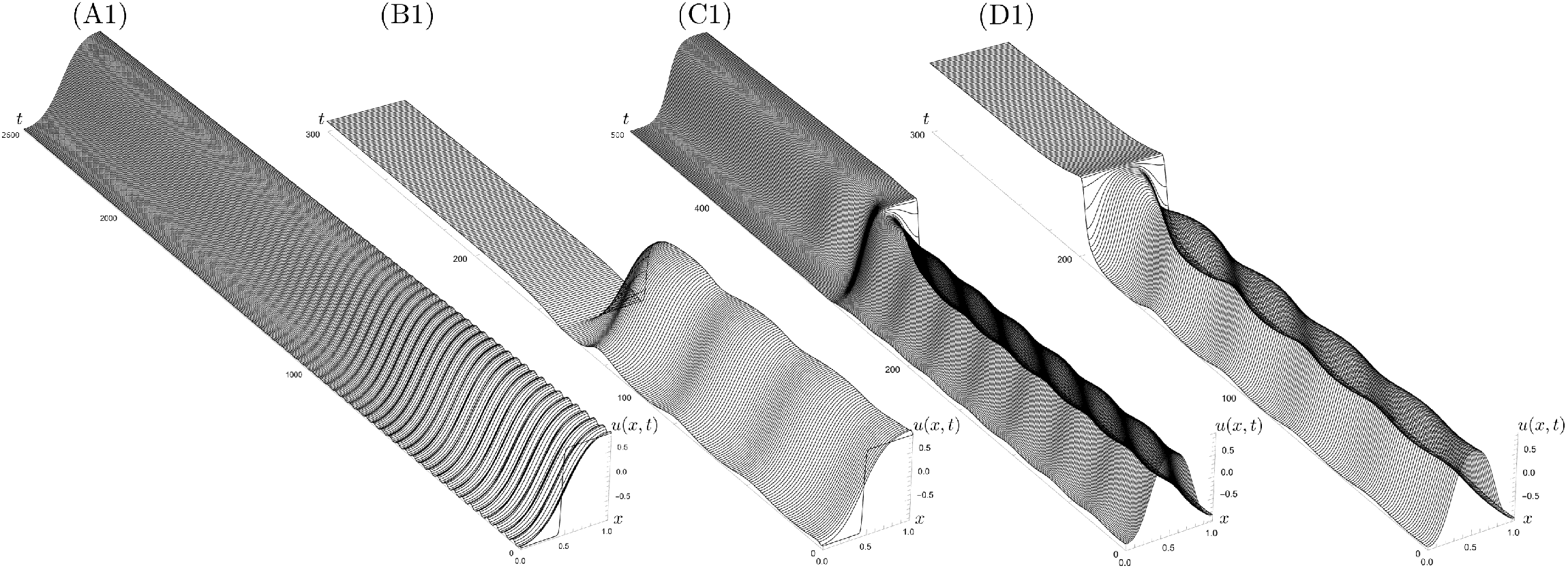}
 \caption{Bird views of $u(x,t)$. $(u_1^-,0)$ is stable in (A1), but it is unstable in (B1).
In both (C1) and (D1) $(u_2^-,0)$ is unstable. Anti-phase horizontal oscillations of two layers are observed in (C1) and (D1).
}\label{fig2}
\end{figure}

In (A1) and (B1) of Figure~\ref{fig2} we choose the left function of Figure~\ref{fig1} as initial data.
In these two cases the exact critical value for the Hopf bifurcation is
\[
\tau_1=6.3828409010676614...\ \ \textrm{for}\ \ \e=0.2,
\]
which can be calculated by \eqref{e} and \eqref{T1E-1}.
Since
\[
6.0<\tau_1<6.7,
\]
it follows from Theorem~\ref{T1} that $(u_1^-,0)$ is stable in (A1) and is unstable in (B1).
Indeed, in (A1) the solution converges to $(u_1^-,0)$ for large $t>0$, while the solution in (B1) does not converge to $(u_1^-,0)$.
The single-layer oscillates in (A1), because of two stable complex eigenvalues which are eigenvalues with negative real parts.
Since the solution in (B1) does not approach to a certain periodic solution, the Hopf bifurcation from $(u_1^-,0)$ at $\tau=\tau_1$ is possibly subcritical.
In (C1) and (D1) of Figure~\ref{fig2} we choose the right function of Figure~\ref{fig1} as initial data.
Since the value of $\e$ in (C1) and (D1) is the half of that in (A1) and (B1), the exact critical value for the Hopf bifurcation in (C1) and (D1) is the same as that in the previous case, {\it i.e.,}
\[
\tau_2=6.3828409010676614...\ \ \textrm{for}\ \ \e=0.1.
\]
For this reason, see \eqref{e} and \eqref{T1E-1}.
In (C1) and (D1) Theorem~\ref{T2} says that $(u_2^-,0)$ is unstable for all $\tau>0$.
The solution of (C1) or (D1) does not converge to $(u_2^-,0)$.
In (C1) $(u_1^-,0)$ is stable and the solution in (C1) converges to $(u_1^-,0)$ for large $t>0$.
Note that $(u_1^-,0)$ is unstable in (D1) and that the solution converges to $(1/\sqrt{2},0)$ which is another stable stationary constant solution.
Anti-phase horizontal oscillations of two layers, which is explained by formal calculation in Section~6, are observed for small $t>0$ in (C1) and (D1).

The graphs (A2), (B2), (C2) and (D2) in Figure~\ref{fig3} show the graphs of $\int_0^1u(x,\,\cdot\,)dx$ in the cases (A1), (B1), (C1) and (D1), respectively.
\begin{figure}[ht]
 \centering
 \includegraphics[keepaspectratio,width=16cm]{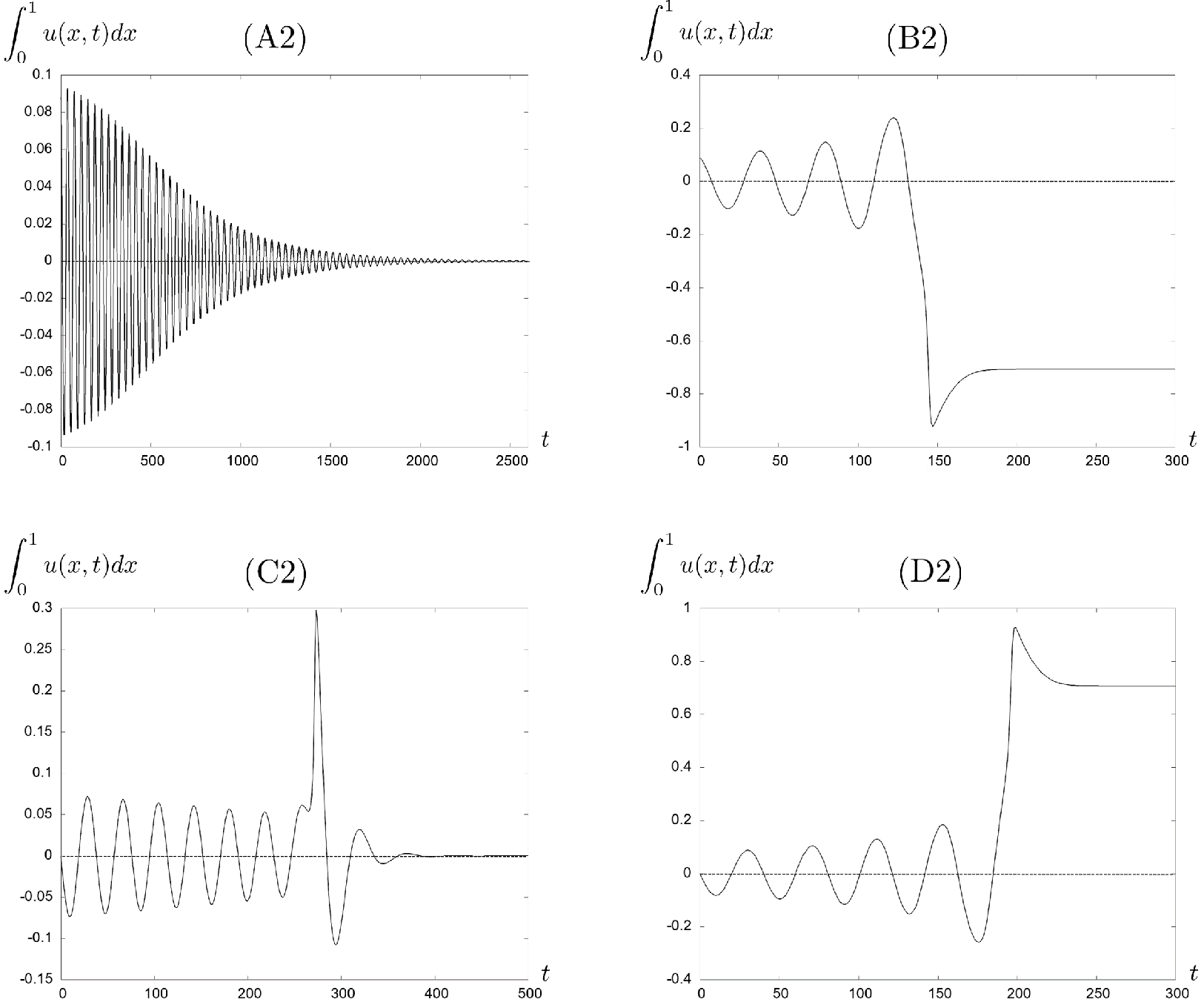}
 \caption{A graph of $\int_0^1u(x,\,\cdot\,)dx$ in four cases.
}\label{fig3}
\end{figure}
In (A2) and the first stage of (C2) amplitudes are decreasing, since $(u_1^-,0)$ is stable in (A1) and $(u_2^-,0)$ is metastable in (C1).
On the other hand, in (B2) and (D2) amplitudes are increasing, since $(u_1^-,0)$ in (B1) is unstable and $(u_2^-,0)$ in (D1) is unstable.
Figure~\ref{fig4} shows an enlarged view of (A2) for $0<t<100$.
\begin{figure}[ht]
 \centering
 \includegraphics[keepaspectratio,width=10cm]{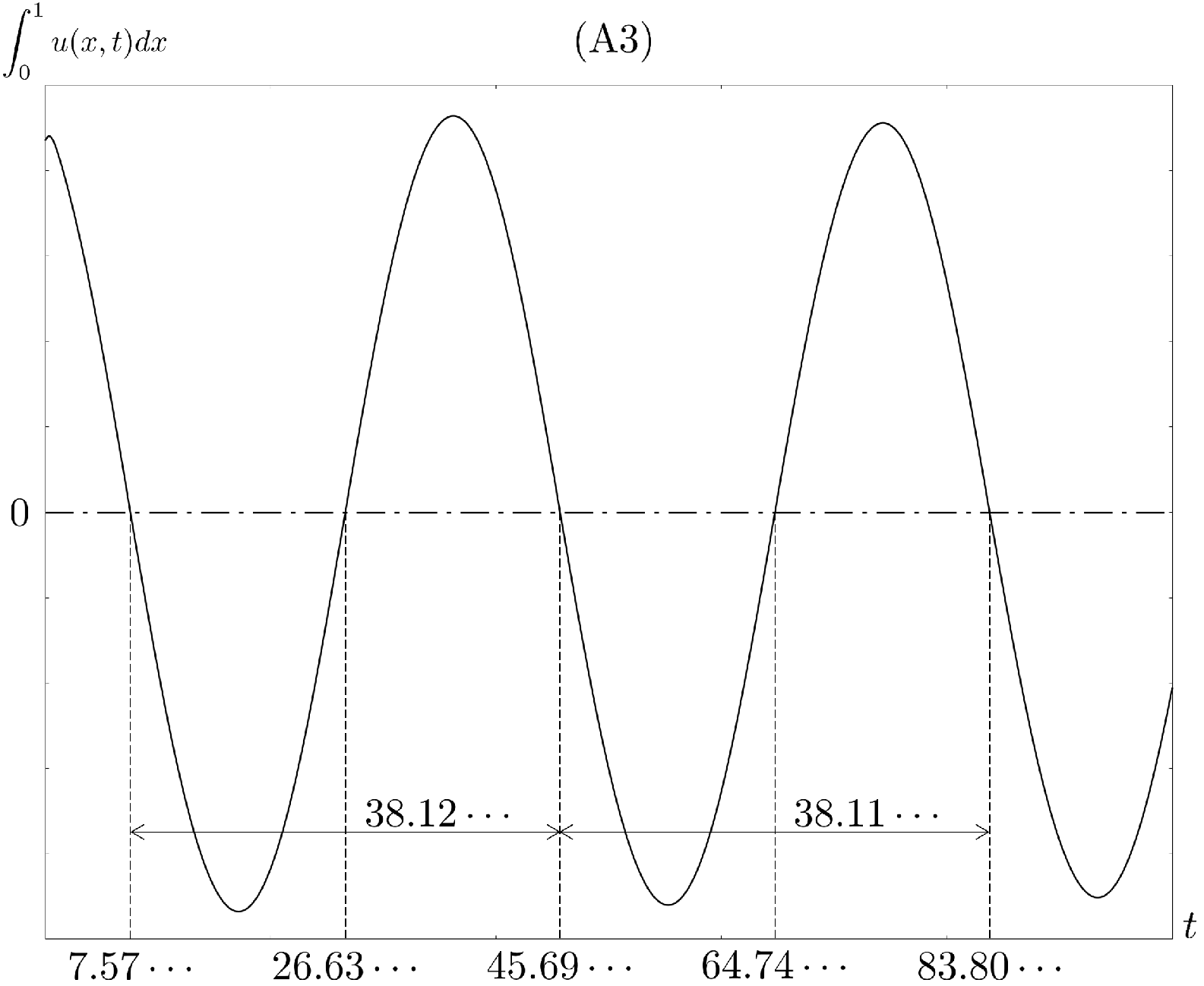}
 \caption{Enlarged view of (A2) for $0<t<100$. The solution is nearly periodic and the first period is approximately $38.12$. 
}\label{fig4}
\end{figure}
The first period of the nearly periodic solution is approximately $38.12$.
The exact period given by \eqref{T1E1} is
\[
\frac{2\pi}{\lambda_{{\rm I},1}}=38.929920651496360...
\]
Therefore, the relative error is
\begin{equation}\label{S7E2}
1-\frac{\textrm{Numerical period}}{\textrm{Exact period}}
\simeq 1-\frac{38.12}{38.92992}\simeq 2.1\%.
\end{equation}
It is difficult to observe oscillations numerically when $\tau$ is quite close to $\tau_1\simeq 6.38284$.
We have chosen $\tau=6.0$ in (A3) (and hence (A1) and (A2)).
Therefore, the relative error \eqref{S7E2} includes not only rounding error but also the error coming from $\tau\neq \tau_1$.

Figure~\ref{fig5} shows a graph of the relative error percentage, which is the LHS of \eqref{S7E2}, of the solution in (A2).
\begin{figure}[ht]
 \centering
 \includegraphics[keepaspectratio,width=10cm]{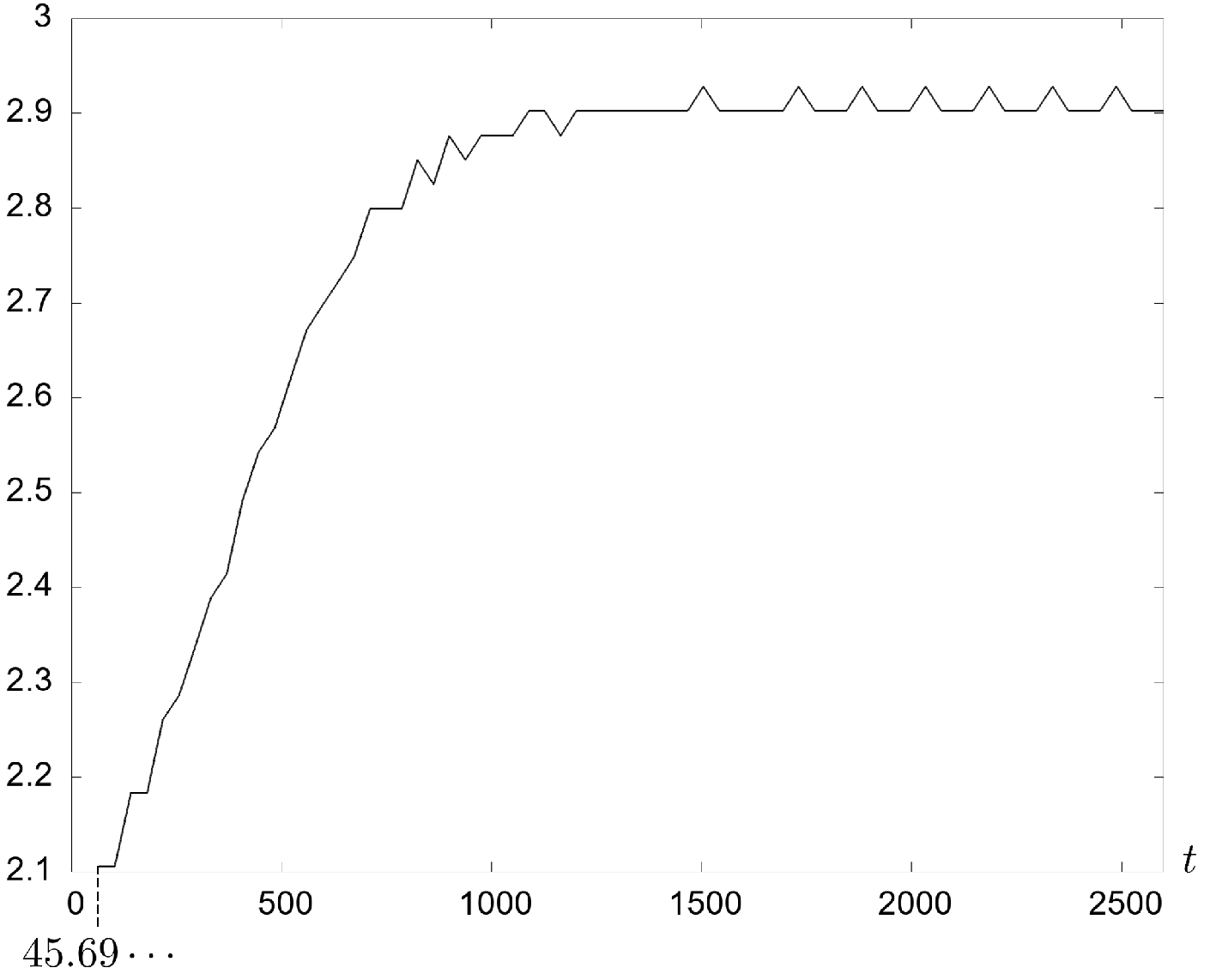}
 \caption{The relative error percentage of the numerical period of (A2) {\it i.e.,} $1-\textrm{Numerical period}/\textrm{Exact period}$.
The relative error starts from $2.1\%$ near $t=0$ and it is increasing in $t$ up to $2.9\%$.
}\label{fig5}
\end{figure}
The horizontal axis stands for the $t$-axis and the vertical axis stands for the relative error percentage.
The graph starts from $2.1\%$ near $t=0$, it increases in $t$ and it is around $2.9\%$ for $1000<t<2600$.
As mentioned above, the Hopf bifurcation is possibly subcritical.
In that case an unstable periodic solution exists in (A1), since $\tau=6.0<\tau_1\simeq 6.38284$.
It seems that the solution in (A1) gradually moves away from the unstable periodic solution, since the solution is attracted by the stable stationary solution $(u_1^-,0)$.
The fact that the numerical period in (A1) is less than the exact period agrees with \eqref{L2E7}, since $\tau=6.0<\tau_1\simeq 6.38284$.

\section{Conluding remarks}
In this paper, we constructed a single and multi-layer stationary solutions and
showed the occurrence of the Hopf bifurcation in exact ways.
Numerical simulations suggested the subcritical bifurcation structure
of  periodic solutions
and also the  anti-phase horizontal oscillations of two layers was explained
in a formal way.
They are problems to be solved as future works. But the
direct calculations by using exact solutions seem so hard.
One way to overcome the difficulty is a contraction technique of the
dynamics such as front interaction. Then the motions can be reduced
to a certain finite dimensional ordinary differential systems,
which are expected to be treated with less difficulty.
In practice, the motion of a single layer solution for a shadow system \rf{S1E1}
is reduced to a two dimensional ODE and the subcritical bifurcation structure
in the neighborhood of the Hopf bifurcation point should be analyzed 
by the reduced ODE.
We will give results on it in a forthcoming paper \cite{EMM}.

\newpage
\renewcommand{\thesection}{\Alph{section}}
\def\theequation{\Alph{section}.\arabic{equation}}
\setcounter{section}{0}
\section{Appendix: Elliptic integrals and functions}
\subsection{Complete elliptic integrals}
Let $k\in[0,1)$.
The complete elliptic integral of the first kind $K(k)$ is defined by
\begin{equation}\label{K}
K(k):=\int_0^1\frac{ds}{\sqrt{(1-s^2)(1-k^2s^2)}},
\end{equation}
and it satisfies $K(k)=\sn^{-1}(1,k)$.
The complete elliptic integral of the second kind is defined by
\begin{equation}\label{E}
E(k):=\int_0^1\sqrt{\frac{1-k^2s^2}{1-s^2}}ds.
\end{equation}
The function $K(k)$ is monotonically increasing in $k$,
\[
K(0)=\frac{\pi}{2},\ \ \lim_{k\to 1^-}K(k)=\infty
\]
and $E(k)$ is monotonically decreasing in $k$,
\[
E(0)=\frac{\pi}{2},\ \ \lim_{k\to 1^-}E(k)=1.
\]

\subsection{Elliptic functions}
Let $k\in (0,1)$.
The Jacobi elliptic function $\sn(x,k)$ is an odd, $4K(k)$-periodic, $2K(k)$-antiperiodic and analytic function for $x\in\R$ and the inverse function $\sn^{-1}(y,k)$ is defined locally by
\[
\sn^{-1}(y,k)=\int_0^{y}\frac{ds}{\sqrt{(1-s^2)(1-k^2s^2)}}
\]
for $0\le y\le 1$.
The function $y=\sn(x,k)$ is a solution of
\[
\begin{cases}
y''+(1+k^2)y-2k^2y^3=0 & \textrm{for}\ x\in\R,\\
y(0)=0,\ y'(0)=1.
\end{cases}
\]
\smallskip

\subsection{Formulas}
We give standard formulas in Lemmas~\ref{AL1}--\ref{AL3} without proofs.
See \cite{BF71,YMM10} for details.
\begin{lemma}\label{AL1}
Let $k\in (0,1)$. Then,
\[
(1-k^2)K(k)<E(k)<\left(1-\frac{1}{2}k^2\right)K(k).
\]
\end{lemma}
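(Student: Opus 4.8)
The plan is to handle the two inequalities separately: the lower bound falls out of a pointwise comparison of integrands, and that bound then feeds directly into a monotonicity-in-$k$ argument for the (more delicate) upper bound. First I would establish $(1-k^2)K(k)<E(k)$. Writing both complete integrals over the common weight $[(1-s^2)(1-k^2s^2)]^{-1/2}$, with $E$ carrying the extra numerator factor $(1-k^2s^2)$, one computes
\[
E(k)-(1-k^2)K(k)=\int_0^1\frac{(1-k^2s^2)-(1-k^2)}{\sqrt{(1-s^2)(1-k^2s^2)}}\,ds
=k^2\int_0^1\sqrt{\frac{1-s^2}{1-k^2s^2}}\,ds .
\]
The last integrand is strictly positive on $(0,1)$, so the lower bound follows at once.

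The upper bound $E(k)<(1-\tfrac12 k^2)K(k)$ is the real obstacle. The analogous difference $(1-\tfrac12 k^2)K(k)-E(k)$ has integrand proportional to $(s^2-\tfrac12)\,[(1-s^2)(1-k^2s^2)]^{-1/2}$, which changes sign at $s=1/\sqrt2$, so a pointwise comparison cannot work; in fact the two sides agree through order $k^2$ as $k\to0$ and separate only at order $k^4$, so the inequality is a genuinely second-order effect. Instead I would set $F(k):=(1-\tfrac12 k^2)K(k)-E(k)$, note that $F(0)=K(0)-E(0)=0$, and differentiate using the standard formulas $\frac{dE}{dk}=\frac{E-K}{k}$ and $\frac{dK}{dk}=\frac{E-(1-k^2)K}{k(1-k^2)}$ (both of which come from differentiating the integral definitions under the integral sign). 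A short, cancellation-heavy computation then collapses $F'(k)$ to
\[
F'(k)=\frac{k\left[E(k)-(1-k^2)K(k)\right]}{2(1-k^2)} .
\]
By the lower bound already proved, the bracket is positive, so $F'(k)>0$ on $(0,1)$; combined with $F(0)=0$ this gives $F(k)>0$, which is exactly the upper bound.

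The single step demanding care is the algebra producing this compact form of $F'(k)$. After substituting the two derivative formulas into $F'(k)=-kK+(1-\tfrac12 k^2)K'-E'$ and clearing the denominator $k(1-k^2)$, the numerator becomes $-k^2(1-k^2)K+(1-\tfrac12 k^2)\bigl(E-(1-k^2)K\bigr)-(1-k^2)(E-K)$, and one must verify that the $E$-terms add to $\tfrac12 k^2 E$ while the $K$-terms add to $-\tfrac12 k^2(1-k^2)K$, so that the numerator is precisely $\tfrac12 k^2\bigl[E-(1-k^2)K\bigr]$. I would emphasize that the structural point here is that the lower bound is not an independent fact but the very ingredient that fixes the sign of $F'$, so the two inequalities form one chain rather than two isolated estimates.
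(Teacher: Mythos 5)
Your proof is correct. Note, however, that the paper does not prove Lemma~\ref{AL1} at all: it is stated as a standard formula, with the proof deferred to the references \cite{BF71,YMM10} (the paper points to p.~128 of \cite{YMM10}). So there is no in-paper argument to compare against, and what you have produced is a self-contained substitute for the citation. Your two steps check out: the lower bound follows from the pointwise computation
\[
E(k)-(1-k^2)K(k)=k^2\int_0^1\sqrt{\frac{1-s^2}{1-k^2s^2}}\,ds>0,
\]
and your algebra for the upper bound is right — substituting $E'=(E-K)/k$ and $K'=\bigl(E-(1-k^2)K\bigr)/\bigl(k(1-k^2)\bigr)$ into $F'(k)=-kK+(1-\tfrac12k^2)K'-E'$ does collapse the numerator to $\tfrac12k^2\bigl[E-(1-k^2)K\bigr]$ (the $E$-coefficients give $(1-\tfrac12k^2)-(1-k^2)=\tfrac12k^2$ and the $K$-coefficients give $(1-k^2)\bigl(1-k^2-(1-\tfrac12k^2)\bigr)=-\tfrac12k^2(1-k^2)$), so $F'(k)=k\bigl[E(k)-(1-k^2)K(k)\bigr]/\bigl(2(1-k^2)\bigr)>0$ and $F(0)=0$ yields the claim. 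Your structural observation is also apt: the upper bound is genuinely second order (the two sides differ first at $O(k^4)$, with $F(k)=\tfrac{\pi}{32}k^4+O(k^6)$), so no pointwise comparison can work, and the lower bound is precisely the input that fixes the sign of $F'$. The only ingredient you take on faith is the pair of derivative formulas for $E$ and $K$; these are standard (e.g.\ Eqs.~710.00 and 710.02 of \cite{BF71}), so citing them is appropriate, but be aware that the formula for $dK/dk$ does not come from naive differentiation under the integral sign alone — it requires a further standard manipulation — so your parenthetical justification is slightly glib on that point, though the statement itself is correct.
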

See \cite[p.128]{YMM10} for Lemma~\ref{AL1}

\begin{lemma}\label{AL2}
Let $k\in(0,1)$. Then,
\[
\lim_{k\to 1^-}\left( K(k)-\log\frac{1}{\sqrt{1-k^2}}-2\log 2\right)=0.
\]
\end{lemma}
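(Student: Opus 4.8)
The plan is to reduce the statement to the asymptotic $K(k)=\log(4/k')+o(1)$ as $k\to 1^-$, where $k':=\sqrt{1-k^2}$ denotes the complementary modulus; since $\log(4/k')=2\log 2+\log(1/\sqrt{1-k^2})$, this is precisely the claimed limit. The first concrete step is to rewrite the defining integral \rf{K} in a form where the logarithmic divergence is transparent. Using the identity $1-k^2s^2=(1-s^2)+k'^2s^2$ together with the substitution $t=s/\sqrt{1-s^2}$ (so that $s:0\to1$ corresponds to $t:0\to\infty$ and $ds/(1-s^2)=dt/\sqrt{1+t^2}$), I would transform $K(k)$ into the symmetric form
\[
K(k)=\int_0^\infty\frac{dt}{\sqrt{(1+t^2)(1+k'^2t^2)}}.
\]
This representation isolates the source of the divergence in the region $t\to\infty$, where the factor $1+k'^2t^2$ only becomes comparable to $1$ once $t\sim 1/k'$.

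Next I would split the integral at $t=1$. On $[0,1]$ the factor $(1+k'^2t^2)^{-1/2}$ tends to $1$ uniformly as $k'\to0$, so the first piece converges to $\int_0^1(1+t^2)^{-1/2}\,dt=\log(1+\sqrt2)$. For the tail $\int_1^\infty$ I would compare the integrand with the explicitly integrable model $\bigl(t\sqrt{1+k'^2t^2}\bigr)^{-1}$, obtained by replacing $\sqrt{1+t^2}$ with $t$. The model integral evaluates in closed form via $v=k't$, giving $\int_1^\infty \bigl(t\sqrt{1+k'^2t^2}\bigr)^{-1}dt=\log\bigl((1+\sqrt{1+k'^2})/k'\bigr)=\log(2/k')+o(1)$, which carries the entire logarithmic blow-up. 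The remaining difference $\int_1^\infty(1+k'^2t^2)^{-1/2}\bigl((1+t^2)^{-1/2}-t^{-1}\bigr)dt$ has an integrand dominated by $|(1+t^2)^{-1/2}-t^{-1}|=O(t^{-3})$, which is integrable on $[1,\infty)$ and independent of $k'$; dominated convergence then sends this difference to $\int_1^\infty\bigl((1+t^2)^{-1/2}-t^{-1}\bigr)dt=\log2-\log(1+\sqrt2)$.

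Collecting the three limits, the constants $\pm\log(1+\sqrt2)$ cancel while $\log(2/k')+\log2=\log(4/k')$ survives, which yields $K(k)=\log(4/k')+o(1)$ and hence the lemma. The main obstacle I anticipate is the rigorous treatment of the tail: one must verify that the logarithmically divergent part is captured \emph{exactly} by the closed-form model integral and that the subtracted remainder admits a $k'$-independent integrable majorant, so that dominated convergence applies uniformly down to $k'=0$. The uniform convergence on $[0,1]$ and the explicit antiderivative computations are routine, and the bookkeeping of the additive constants is the only place where an arithmetic slip could spoil the precise value $2\log 2$.
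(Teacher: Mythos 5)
Your proposal is correct, and it necessarily differs from the paper, because the paper does not prove Lemma~\ref{AL2} at all: it merely cites Eq.~112.01 of the Byrd--Friedman handbook \cite{BF71}. I verified each step of your argument. The substitution $t=s/\sqrt{1-s^2}$ does give $1-k^2s^2=(1+k'^2t^2)/(1+t^2)$ and $ds=(1+t^2)^{-3/2}dt$, hence the symmetric representation $K(k)=\int_0^\infty\bigl[(1+t^2)(1+k'^2t^2)\bigr]^{-1/2}dt$ with $k'=\sqrt{1-k^2}$. On $[0,1]$ the convergence is uniform and the limit is $\log(1+\sqrt{2})$; the model tail integral evaluates, via $v=k't$ and the antiderivative $\log\bigl(v/(1+\sqrt{1+v^2})\bigr)$, to $\log\bigl((1+\sqrt{1+k'^2})/k'\bigr)=\log(2/k')+o(1)$; and the remainder has integrand bounded in absolute value by $\bigl(t\sqrt{1+t^2}\,(t+\sqrt{1+t^2})\bigr)^{-1}\le \tfrac{1}{2}t^{-3}$ independently of $k'$ (since $(1+k'^2t^2)^{-1/2}\le 1$), so dominated convergence gives the limit $\log 2-\log(1+\sqrt{2})$, using the antiderivative $\log\bigl((t+\sqrt{1+t^2})/t\bigr)$. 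The constants $\pm\log(1+\sqrt{2})$ cancel exactly as you say, yielding $K(k)=\log(4/k')+o(1)=2\log 2+\log\frac{1}{\sqrt{1-k^2}}+o(1)$, which is the lemma. What your route buys is a self-contained, elementary proof (one substitution, two explicit antiderivatives, dominated convergence) replacing a black-box handbook citation; moreover, if one tracks the errors quantitatively ($O(k'^2)$ on $[0,1]$, $O(k'^2)$ in the model integral's endpoint expansion, and an $O(k'^2)$-type estimate for the remainder), the same decomposition yields a rate of convergence, which is strictly more than the paper's statement requires.
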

See \cite[Eq.112.01 in p.11]{BF71} for Lemma~\ref{AL2}.

\begin{lemma}\label{AL3}
Let $k\in (0,1)$. Then,
\[
\int_0^{K(k)}\sn^2(x,k)dx=\frac{K(k)-E(k)}{k^2}.
\]
\end{lemma}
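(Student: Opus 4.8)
The plan is to recognize the left-hand side as essentially the complete elliptic integral of the second kind, reached through the substitution $s=\sn(x,k)$. First I would recall the two companion Jacobi functions $\operatorname{cn}(x,k)$ and $\operatorname{dn}(x,k)$, together with the algebraic identities $\sn^2(x,k)+\operatorname{cn}^2(x,k)=1$ and $\operatorname{dn}^2(x,k)=1-k^2\sn^2(x,k)$, and the derivative formula $\frac{d}{dx}\sn(x,k)=\operatorname{cn}(x,k)\operatorname{dn}(x,k)$. These are standard and follow from the defining ODE recalled in the Appendix.

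Next I would transform the defining integral \rf{E} of $E(k)$ by setting $s=\sn(x,k)$. On $[0,K(k)]$ the function $\sn(\,\cdot\,,k)$ is strictly increasing from $\sn(0,k)=0$ to $\sn(K(k),k)=1$, so this is an admissible change of variables, and under it $ds=\operatorname{cn}(x,k)\operatorname{dn}(x,k)\,dx$, $\sqrt{1-s^2}=\operatorname{cn}(x,k)$ and $\sqrt{1-k^2s^2}=\operatorname{dn}(x,k)$. After cancellation this yields
\[
E(k)=\int_0^{K(k)}\operatorname{dn}^2(x,k)\,dx.
\]

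Then I would substitute $\operatorname{dn}^2(x,k)=1-k^2\sn^2(x,k)$ and integrate term by term, using $\int_0^{K(k)}dx=K(k)$, to obtain
\[
E(k)=K(k)-k^2\int_0^{K(k)}\sn^2(x,k)\,dx.
\]
Solving this linear relation for $\int_0^{K(k)}\sn^2(x,k)\,dx$ gives the asserted formula.

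The only point requiring care is the change of variables near the upper endpoint, where the integrand in \rf{E} is singular because $\sqrt{1-s^2}\to 0$ as $s\to 1^-$. I would treat \rf{E} as an improper integral and note that this singularity is cancelled in $\frac{ds}{\sqrt{1-s^2}}=\operatorname{dn}(x,k)\,dx$ by the factor $\operatorname{cn}(x,k)$ coming from $ds$ (indeed $\operatorname{cn}(K(k),k)=0$, which is precisely what stretches the finite interval $[0,K(k)]$ onto $[0,1]$); thus no boundary contribution is lost and the transformed integral $\int_0^{K(k)}\operatorname{dn}^2(x,k)\,dx$ is a genuine convergent integral of a bounded function. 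This is the main—and essentially the only—technical obstacle, and it is routine.
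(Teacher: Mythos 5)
Your proof is correct. Note, however, that the paper does not actually prove Lemma~\ref{AL3}: it is listed among the ``standard formulas\dots without proofs,'' with a pointer to Byrd--Friedman (Eq.~310.02, p.~191), so there is no in-paper argument to compare against. Your derivation is the standard one underlying that handbook entry: the substitution $s=\sn(x,k)$ in the defining integral \rf{E} gives $E(k)=\int_0^{K(k)}\operatorname{dn}^2(x,k)\,dx$, and the identity $\operatorname{dn}^2(x,k)=1-k^2\sn^2(x,k)$ then yields $E(k)=K(k)-k^2\int_0^{K(k)}\sn^2(x,k)\,dx$, which is the claimed formula. All the ingredients you invoke are available from the paper's own Appendix: since $\sn$ is defined there through the inverse-function integral, the derivative formula $\frac{d}{dx}\sn(x,k)=\sqrt{\bigl(1-\sn^2(x,k)\bigr)\bigl(1-k^2\sn^2(x,k)\bigr)}=\operatorname{cn}(x,k)\operatorname{dn}(x,k)$ on $[0,K(k)]$ is immediate, and the identification $\sqrt{1-s^2}=\operatorname{cn}(x,k)$ is legitimate because $\operatorname{cn}(\cdot,k)\ge 0$ there (its first zero is at $x=K(k)$), a sign condition worth stating explicitly. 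Your treatment of the improper endpoint $s\to 1^-$ is exactly right and is the only point of technical care; in effect you have supplied the self-contained proof that the paper delegates to the literature.
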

See \cite[Eq.310.02 in p.191]{BF71} for Lemma~\ref{AL3}.



\begin{thebibliography}{99}
\bibitem{BF71}{P.~Byrd and M.~Friedman},
{Handbook of Elliptic Integrals for Engineers and Scientists, second edition, revised},
{\it Die Grundlehren der mathematischen Wissenschaften, vol. 67, Springer-Verlag, New York-Heidelberg, 1971.}

\bibitem{CR77}{M.~Crandall and P.~Rabinowitz},
{\it The Hopf bifurcation theorem in infinite dimensions},
Arch.~Rational Mech.~Anal.~{\bf 67} (1977), 53--72.

\bibitem{D01}{E.~Dancer},
{\it On stability and Hopf bifurcations for Chemotaxis systems},
Methods Appl.~Anal.~{\bf 8} (2001), 245--256.

\bibitem{EMM} S.-I. Ei, Y. Miyamoto and T. Mori,
Front interaction approach for a shadow reaction-diffusion system, in preparation.

\bibitem{GMW21}{D.~Gomez, L.~Mei and J.~Wei},
{\it Hopf bifurcation from spike solutions for the weak coupling Gierer-Meinhardt system},
European J.~Appl.~Math.~{\bf 32} (2021), 113--145.

\bibitem{J95}{C.K.R.T.~Jones},
{\it Geometric singular perturbation theory}, Lec.~Notes in Math.~{\bf 1609} (1995), 44--118. Springer. 

\bibitem{K04}{H.~Kielh\"{o}fer},
{Bifurcation theory. An introduction with applications to PDEs},
{\it Applied Mathematical Sciences, {\bf 156}. Springer-Verlag, New York, 2004.~viii+346 pp.~ISBN: 0-387-40401-5.}

\bibitem{KK} S.~ Koga and Y.~ Kuramoto, 
Localized patterns in reaction-diffusion systems,
Prog.~Theor.~Physics {\bf 63} (1980), 106--121.

\bibitem{KEM92}{M.~Kuwamura, S.-I.~Ei and M.~Mimura},
{\it Very slow dynamics for some reaction-diffusion systems of the activator-inhibitor type},
Japan J.~Indust.~Appl.~Math. {\bf 9} (1992), 35--77.  

\bibitem{MNN24}{Y.~Miyamoto, H.~Nakamura and K.~Nishigaki},
{\it Exact periods and exact critical values for Hopf bifurcations from multi-peak solutions of the shadow Gierer-Meinhardt model in one spatial dimension},
J.~Differential Equations {\bf 413} (2024), 34--60.

\bibitem{NF87}{Y.~Nishiura and H.~Fujii},
{\it Stability of singularly perturbed solutions to systems of reaction diffusion equations},
SIAM J.~Math.~Anal. {\bf 18} (1987), 1726--1770. 

\bibitem{NM89}{Y.~Nishiura and M.~Mimura},
{\it Layer oscillations in reaction-diffusion systems},
SIAM J.~Appl.~Math. {\bf 49} (1989), 481--514.

\bibitem{W06}{T.~Wakasa},
{\it Exact eigenvalues and eigenfunctions associated with linearization for Chafee-Infante problem},
Funkcial.~Ekvac.~{\bf 49} (2006), 321--336.

\bibitem{WY15}{T.~Wakasa and S.~Yotsutani},
{\it Limiting classification on linearized eigenvalue problems for 1-dimensional Allen-Cahn equation I--asymptotic formulas of eigenvalues},
J.~Differential Equations {\bf 258} (2015), 3960--4006.

\bibitem{WW03a}{M.~Ward and J.~Wei},
{\it Hopf bifurcation of spike solutions for the shadow Gierer-Meinhardt model},
Eur.~J.~Appl.~Math.~{\bf 14} (2003), 677--711.

\bibitem{WW03}{J.~Wei and M. Winter},
{\it Stability of monotone solutions for the shadow Gierer-Meinhardt system with finite diffusivity},
Differ.~Integral Equ.~{\bf 16} (2003), 1153--1180.

\bibitem{YMM10}{S.~Yotsutani, M.~Murai and W.~Matsumoto},
{\it On an application of CAS to research on differential equations} (in Japanese),
S$\bar{\rm u}$rikaisekikenky$\bar{\rm u}$sho K$\bar{\rm o}$ky$\bar{\rm u}$roku {\bf 1674} (2010), 125-133.
\url{https://www.kurims.kyoto-u.ac.jp/~kyodo/kokyuroku/contents/pdf/1674-15.pdf}

\end{thebibliography}
\end{document}